\definecolor{gr}{rgb}   {0.,   0.69,   0.23 }
\definecolor{bl}{rgb}   {0.,   0.5,   1. }
\definecolor{mg}{rgb}   {0.85,  0.,    0.85}
\definecolor{yl}{rgb}   {0.8,  0.7,   0.}
\definecolor{or}{rgb}  {0.7,0.2,0.2}
\newtheorem{theorem}{Theorem} [section]
\newtheorem{lemma}[theorem]{Lemma}
\newtheorem{proposition}[theorem]{Proposition}
\newtheorem{remark}[theorem]{Remark}
\newtheorem{corollary}[theorem]{Corollary}
\newcommand{\I}{\hspace{0.5mm}\text{I}\hspace{0.5mm}}
\newcommand{\II}{\text{I \hspace{-2.8mm} I} }
\newcommand{\noi}{\noindent}
\newcommand{\Z}{\mathbb{Z}}
\newcommand{\R}{\mathbb{R}}
\newcommand{\T}{\mathbb{T}}
\let\Re=\undefined\DeclareMathOperator*{\Re}{Re}
\let\Im=\undefined\DeclareMathOperator*{\Im}{Im}
\let\P= \undefined
\newcommand{\P}{\mathbf{P}}
\newcommand{\E}{\mathbb{E}}
\newcommand{\F}{\mathcal{F}}
\newcommand{\al}{\alpha}
\newcommand{\be}{\beta}
\newcommand{\dl}{\delta}
\newcommand{\nb}{\nabla}
\newcommand{\Dl}{\Delta}
\newcommand{\eps}{\varepsilon}
\newcommand{\kk}{\kappa}
\newcommand{\g}{\gamma}
\newcommand{\G}{\Gamma}
\newcommand{\ld}{\lambda}
\newcommand{\Ld}{\Lambda}
\newcommand{\s}{\sigma}
\newcommand{\ft}{\widehat}
\newcommand{\wt}{\widetilde}
\newcommand{\cj}{\overline}
\newcommand{\dx}{\partial_x}
\newcommand{\dt}{\partial_t}
\newcommand{\dd}{\partial}
\newcommand{\ta}{\theta}
\renewcommand{\l}{\ell}
\renewcommand{\o}{\omega}
\renewcommand{\O}{\Omega}
\newcommand{\les}{\lesssim}
\newcommand{\ges}{\gtrsim}
\newcommand{\jb}[1]
{\langle #1 \rangle}
\renewcommand{\b}{\beta}
\newcommand{\ind}{\mathbf 1}
\newcommand{\N}{\mathbb{N}}
\newcommand{\EE}{\mathcal{E}}
\renewcommand{\H}{\mathcal{H}}
\newtheorem*{ackno}{Acknowledgements}
\numberwithin{equation}{section}
\numberwithin{theorem}{section}
\newcommand{\NLW}{\textup{NLW}}
\newcommand{\NKG}{\textup{NLKG}}
\newcommand{\gf}{\mathfrak{g}}
\begin{document}
\baselineskip = 14pt

\title[Quasi-invariant measures for 2-$d$ NLW ]
{
Quasi-invariant Gaussian measures for
 the two-dimensional defocusing cubic nonlinear wave equation
} 
\author[T.~Oh and   N.~Tzvetkov]
{Tadahiro Oh  and Nikolay Tzvetkov}

\address{
Tadahiro Oh, School of Mathematics\\
The University of Edinburgh\\
and The Maxwell Institute for the Mathematical Sciences\\
James Clerk Maxwell Building\\
The King's Buildings\\
Peter Guthrie Tait Road\\
Edinburgh\\ 
EH9 3FD\\
 United Kingdom}

\email{hiro.oh@ed.ac.uk}

\address{
Nikolay Tzvetkov\\
Universit\'e de Cergy-Pontoise\\
 2, av.~Adolphe Chauvin\\
  95302 Cergy-Pontoise Cedex\\
  France}

\email{nikolay.tzvetkov@u-cergy.fr}

\subjclass[2010]{35L71; 60H30}
\keywords{nonlinear wave equation; 
nonlinear Klein-Gordon equation;
 Gaussian measure; quasi-invariance}
\begin{abstract}
We study  the transport properties of the Gaussian measures
on Sobolev spaces under the dynamics of  the two-dimensional
defocusing cubic nonlinear wave equation (NLW).
Under some regularity condition,
we prove quasi-invariance of  the mean-zero Gaussian measures on Sobolev spaces
for the NLW dynamics.
We achieve this goal by introducing 
a simultaneous renormalization on the energy functional and its time derivative
and establishing a renormalized energy estimate in the probabilistic setting.
\end{abstract}

\maketitle

\vspace{-8mm}

\section{Introduction}
\subsection{General context}
In probability theory, 
the transport properties of 
Gaussian measures under linear and nonlinear transformations have attracted wide 
attention since the seminal work of Cameron-Martin \cite{CM}. 
In the special case of linear transformations given by 
the translation by a fixed (deterministic) vector,
Cameron-Martin provided a complete answer to this question in \cite{CM}.
 This result then formed the basis of the infinite dimensional analysis, 
 the so-called Malliavin calculus. 
In \cite{RA},  Ramer further studied the transport property of Gaussian measures
under a general nonlinear transformation on an abstract Wiener space
and gave a criterion, guaranteeing  that Gaussian measures are {\it quasi-invariant} under general transformations which are (essentially speaking) Hilbert-Schmidt perturbations of the identity. 
Here, by quasi-invariance, 
we mean that 
a measure $\mu$ on a measure space $(X, \mu)$ 
and the pushforward $T_*\mu$ of $\mu$ under  a measurable transformation $T:X\to X$, 
defined by $T_*\mu = \mu\circ T^{-1}$, are 
equivalent, namely~mutually absolutely continuous with respect to each other.

The quasi-invariance result by Ramer 
is of course more general than Cameron-Martin's result 
because it applies to general nonlinear transformations 
and it is certainly the best result one can expect in the context of general nonlinear transformations. 
In~\cite{Cru1,Cru2},
Cruzeiro  studied
flows generated by vector fields on abstract Wiener spaces
and
established 
 an abstract criterion,  guaranteeing  quasi-invariance of Gaussian measures 
 under such flows. 
We point out that  the verification of such a criterion was not carried out for concrete examples in 
\cite{Cru1,Cru2}. 
Lastly, let us  mention a generalization 
of Cruzeiro's work by Peters \cite{P1,P2}.
In particular, by exploiting the symplectic structure of the vector field, 
 he also showed that the Gaussian measure on\footnote{More precisely, 
Peters considered the Gaussian measure 
$d \mu  = Z^{-1} \exp (-\frac{1}{2} \| (u, v) \|_{H^\frac{1}{2}\times H^{-\frac{1}{2}}}^2) du dv$
on $H^\s(\T)\times H^{\s-1}(\T)$, $\s <0$, 
for which  $H^\frac{1}{2} (\T)\times H^{-\frac{1}{2}}(\T)$ is the Cameron-Martin space.
See \eqref{gauss0} below.
Note that the regularity $\frac{1}{2}$
plays an important role  in \cite{P2}
since $H^\frac{1}{2} (\T)\times H^{-\frac{1}{2}}(\T)$ is the symplectic space
for the Klein-Gordon equations, including the sine-Gordon equation.
}
  $ H^\frac{1}{2} (\T)\times H^{-\frac{1}{2}}(\T)$
is quasi-invariant under the flow of the Wick ordered sine-Gordon equation on the circle.

In the recent works \cite{Tzvet, OTz, OST}, 
we further studied the transport property of Gaussian measures under nonlinear Hamiltonian PDE dynamics
and succeeded to prove quasi-invariance of Gaussian measures on periodic functions.
In particular, 
in \cite{Tzvet},
the second author introduced a general 
strategy, combining   PDE and stochastic analysis to prove quasi-invariance of Gaussian measures
under nonlinear Hamiltonian PDE dynamics, 
thus verifying 
an assumption of the type imposed in 
  \cite{Cru1,Cru2, P1} 
for some concrete examples (without relying on a special structure of an underlying space
such as the symplectic structure  in~\cite{P2}).
In~\cite{Tzvet},  we considered the BBM-type equations 
and by exploiting energy estimates,  which are quite standard in the field of hyperbolic PDEs, 
we  established quasi-invariance of Gaussian measures on periodic functions, going beyond Ramer's result.
While it was only stated in a remark, 
similar quasi-invariance results hold for 
 the one-dimensional nonlinear wave equations (NLW) and nonlinear Klein-Gordon equations (NLKG).
In \cite{OTz, OST}, we studied 
the quasi-invariance property of Gaussian measures 
under the dynamics of the one-dimensional cubic fourth order  nonlinear Schr\"odinger equation.
By applying gauge transformations\footnote{In a recent paper \cite{OTT}, by applying a further gauge transformation,
we extended the quasi-invariance result to the cubic nonlinear Schr\"odinger equation 
with third order dispersion.} 
and (an infinite iteration of) normal form transformations,
we proved quasi-invariance of Gaussian measures,
which is optimal in terms of Sobolev regularities.

In the present paper, 
we will further develop the method of \cite{Tzvet, OTz} in the context of two-dimensional nonlinear wave equations. 
We follow the new strategy introduced by the second author in \cite{Tzvet}.
Namely, we prove the quasi-invariance property for a weighted Gaussian measure which is absolutely continuous 
with respect to the underlying Gaussian measure.
The density of such a weighted Gaussian measure  is  inspired by an energy functional associated to the equation. 
Observe that  our approach is already quite different compared to Ramer's analysis \cite{RA}.  
In a sharp contrast with the previous works \cite{Tzvet, OTz, OST},  
in this work, we need to use a {\it renormalized} energy functional. 
Such a renormalized energy is closely related to renormalizations considered in  Euclidean quantum field theory \cite{Simon}. 
On the one hand, such renormalizations often force us to work with renormalized equations.
See \cite{OTh2} in the context of two-dimensional NLW endowed with Gibbs measures.
On the other hand, 
this is {\it not} the case in our analysis;
 we are able to keep the original equation despite the use of the renormalized energy. 
This is achieved by performing a {\it simultaneous renormalization
of the energy functional and its time derivative}.
See Subsection \ref{SUBSEC:1.4} below.
 In particular, after  introducing the renormalized energy, 
we establish  a {\it renormalized energy estimate}
that is suitable for studying the dynamical property of the original equation in the probabilistic manner. 
This renormalized energy estimate is the main novelty of this work. 
As we shall see below, its proof is quite intricate and it does not result 
from purely linear Gaussian considerations unlike the previous works \cite{Tzvet, OTz}.

\subsection{Main result}

Consider the defocusing cubic nonlinear wave equation on $\T^2 = (\R/\Z)^2$:
\begin{equation}\label{NLW}
\partial_t^2 u-\Delta u+u^3=0,
\end{equation}

\noi
where $u : \T^2 \times \R\rightarrow \R$ is the unknown function. 
With $v = \dt u$, we rewrite \eqref{NLW} as the following first order system:
\begin{equation}\label{NLW-sys}
\begin{cases}
\partial_t u=v\\
 \partial_t v=\Delta u-u^3.
 \end{cases}
\end{equation}

\noi
The system \eqref{NLW-sys} is a Hamiltonian system of PDEs
with the Hamiltonian:
\begin{equation}\label{H}
H(u,v)=\frac{1}{2}\int_{\T^2}\big(|\nabla u|^2 + v^2\big)dx +\frac{1}{4}\int_{\T^2}u^4dx.
\end{equation}

\noi
It is easy to verify that,  if $(u,v)$ is a smooth solution to \eqref{NLW-sys}, then
\[\frac{d}{dt}H(u(t),v(t))=0.\]

In view of the structure of the Hamiltonian $H(u,v)$ and the properties of the linear wave equation, 
it is natural to study \eqref{NLW-sys} in the space: 
$$
{\mathcal H}^s(\T^2)\equiv H^s(\T^2)\times H^{s-1}(\T^2),
$$
where $H^s(\T^2)$ is the classical $L^2$-based Sobolev space of order $s$. 
By a classical argument (see the next section),  
one can show that \eqref{NLW-sys} is globally well-posed 
in ${\mathcal H}^\sigma(\T^2)$, $\sigma\geq 1$. 
Let us denote this global flow by $\Phi_{\NLW}(t)$, $t\in\R$.

Our main goal is to study the quasi-invariance property under $\Phi_{\NLW}(t)$ of the Gaussian measure $\mu_s$, {\it formally} defined by
\begin{align}
\begin{split}
 d \mu_s 
&   = Z_s^{-1} e^{-\frac 12 \| (u,v)\|_{{\mathcal H}^{s+1}}^2} du dv
\rule[-4mm]{0mm}{0mm}
\\
& =  Z_s^{-1} \prod_{n \in \Z^2} 
 e^{-\frac 12 \jb{n}^{2(s+1)} |\ft u_n|^2}   
  e^{-\frac 12 \jb{n}^{2s} |\ft v_n|^2}   
 d\ft u_n d\ft v_n,
\end{split}
\label{gauss0}
\end{align}

\noi
where 
  $\jb{\,\cdot\,} = (1+|\,\cdot\,|^2)^\frac{1}{2}$
and $\ft u_n$ and $\ft v_n$ denote the Fourier transforms of $u$ and $v$, respectively. 
Note that this measure is naturally associated to 
the linear wave dynamics.
In particular, $\mu_s$ is invariant under the linear wave dynamics.

We can define the measure $\mu_s$ in a rigorous manner
by viewing  it as  the induced probability measure
under the map:
\begin{equation*}
\o \in \O \longmapsto (u^\o, v^\o),
 \end{equation*}

\noi
where $u^\o$ and $v^\o$ are given by\footnote{Henceforth, we drop the harmless factor $2\pi$.} 
\begin{equation}\label{series}
u^\o(x) = \sum_{n \in \Z^2} \frac{g_n(\o)}{\jb{n}^{s+1}}e^{in\cdot x}
\qquad\text{and}\qquad
v^\o(x) = \sum_{n \in \Z^2} \frac{h_n(\o)}{\jb{n}^{s}}e^{in\cdot x}.
\end{equation}

\noi
Here, 
$\{ g_n \}_{n \in \Z^2}$ and  $\{ h_n \}_{n \in \Z^2}$
are two sequences of ``independent standard" complex-valued  Gaussian random variables 
on a probability space $(\O, \F, P)$
conditioned that  $g_{-n}=\cj{g_{n}}$,  $h_{-n}=\cj{h_{n}}$.
More precisely,  with the index set $\Ld$ defined by 
\begin{align}
 \Ld =  (\Z\times \Z_+) \cup (\Z_+\times \{0\}) \cup\{(0, 0)\},
 \label{Ld1}
\end{align}

\noi
we define 
$\{g_n, h_n\}_{n\in\Ld}$ 
to be a sequence  of independent standard 
complex-valued  Gaussian random variables 
(with $g_0, h_0$ real-valued)
and set   $g_{-n}=\cj{g_{n}}$,  $h_{-n}=\cj{h_{n}}$ for $n \in \Z^2$.

The partial sums of the series in \eqref{series} 
are a Cauchy sequence in $L^2(\Omega; \mathcal{H}^{\sigma}(\T^2))$ for every $\sigma<s$
and  therefore one can view $\mu_s$ as a probability measure on ${\mathcal H}^\sigma(\T^2)$ for a fixed $\sigma<s$.
In particular, for $s>1$,
 the flow $\Phi_{\NLW}(t)$ is well defined $\mu_s$-almost surely. 
We also point out that, for the same range of $\s$, 
the triplet $\big(\H^{s+1}(\T^2), \H^\s(\T^2), \mu_s\big)$ forms an abstract Wiener space.
See~\cite{GROSS, Kuo2}.

 We now state our main result.

\begin{theorem}\label{THM:NLW}
Let $s\geq 2$ be an even integer. Then,  $\mu_s$ is quasi-invariant under $\Phi_{\NLW}(t)$. 
\end{theorem}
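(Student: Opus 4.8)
The plan is to follow the weighted-measure strategy of \cite{Tzvet,OTz}, but with the renormalization described in Subsection \ref{SUBSEC:1.4}. First I would fix a smooth frequency cutoff $\mathbf 1_{|n|\le N}$, let $\pi_N$ be the corresponding projection, and study the truncated dynamics $\Phi_{\NLW,N}(t)$ on the finite-dimensional space spanned by the frequencies $|n|\le N$ (with the high modes evolving linearly). For the truncated flow one has exact conservation of a truncated Hamiltonian $H_N$, and the base Gaussian measure $\mu_s$ splits as $\mu_{s,N}\otimes\mu_{s,N}^\perp$, with $\mu_{s,N}^\perp$ trivially invariant under the linear high-mode flow. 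The key object is a weighted measure $d\rho_{s,N}=Z_N^{-1}\exp\big(-R_N(u,v)\big)\,d\mu_{s,N}(u,v)\otimes d\mu_{s,N}^\perp$, where $R_N$ is a \emph{renormalized} version of the potential-energy-type correction coming from $\frac14\int u^4\,dx$ together with the $\|(u,v)\|_{\mathcal H^{s+1}}^2$-derived terms; the Wick renormalization (subtracting the divergent $\E[(\pi_N u^\o)^2]\sim \log N$ contributions) is what makes $\exp(-R_N)\in L^p(\mu_s)$ uniformly in $N$, using Nelson-type estimates in the Wiener chaos, and guarantees $\rho_{s,N}\to\rho_s$ in a suitable sense with $\rho_s\ll\mu_s$ and $\tfrac{d\rho_s}{d\mu_s}\in L^p$ for all finite $p$.

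Next I would differentiate $t\mapsto \rho_{s,N}\big((\Phi_{\NLW,N}(t))^{-1}(A)\big)$ in time. The Liouville theorem for the truncated (finite-dimensional Hamiltonian) flow kills the Lebesgue-measure part, so the derivative reduces to $\int_A \frac{d}{dt}\big[(\text{density along the flow})\big]$, and the density along the flow is, up to constants, $\exp\big(-\tfrac12\|\Phi_{\NLW,N}(t)(u,v)\|_{\mathcal H^{s+1}}^2-R_N(\Phi_{\NLW,N}(t)(u,v))\big)$ times the $\mu_{s,N}^\perp$-invariance. The whole point of the \emph{simultaneous} renormalization is that $\frac{d}{dt}\big(\tfrac12\|(u,v)\|_{\mathcal H^{s+1}}^2+R_N\big)$ evaluated along the NLW flow equals a quantity $\mathcal Q_N(u,v)$ with \emph{no growing constant}: the naive time derivative of $\tfrac12\|(u,v)\|_{\mathcal H^{s+1}}^2$ produces a term $\int \langle\nabla\rangle^{2s}(uv)\cdot u^2\,dx$-type expression whose expectation diverges, but choosing $R_N$ so that its time derivative along the flow cancels exactly that divergence leaves a renormalized, $N$-uniformly-integrable remainder. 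This is the \emph{renormalized energy estimate}, and establishing it is the main obstacle: one must expand $\frac{d}{dt}\|\pi_N(u,v)\|_{\mathcal H^{s+1}}^2$ by Leibniz, track every term through integration by parts (moving derivatives off the worst factor), identify the resonant/divergent pieces, match them against $\partial_t R_N$, and then bound the leftover multilinear expressions in $L^p(\mu_{s,N})$ uniformly in $N$ using Wiener-chaos (hypercontractivity) estimates, the counting bounds $\sum_{n_1+n_2+n_3+n_4=0}\prod\langle n_j\rangle^{-\alpha_j}<\infty$, and the parity/evenness of $s$ (which is exactly what makes the $\langle\nabla\rangle^{2s}$ weight expand cleanly under Leibniz and keeps all the combinatorics integer-indexed).

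Granting the renormalized energy estimate, the argument concludes in the standard way. From $\frac{d}{dt}\rho_{s,N}\big((\Phi_{\NLW,N}(t))^{-1}(A)\big)=\int_{(\Phi_{\NLW,N}(t))^{-1}(A)}\mathcal Q_N\,d\rho_{s,N}$ and $|\mathcal Q_N|\le C(1+\|(u,v)\|_{\mathcal H^\sigma}^\theta)\cdot(\text{chaos-bounded factor})$, together with $\frac{d\rho_{s,N}}{d\mu_s}\in L^p$ uniformly, one gets by Yudovich-type / Gr\"onwall arguments an exponential-in-$t$, $N$-uniform bound: $\rho_{s,N}\big((\Phi_{\NLW,N}(t))^{-1}(A)\big)\le C\,\rho_{s,N}(A)^{1-\delta_N}$ with $\delta_N\to 0$ as $\mu_s(A)\to 0$, uniformly in $N$. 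Passing to the limit $N\to\infty$ — using the a priori bounds on $\Phi_{\NLW}(t)-\Phi_{\NLW,N}(t)$ on the statistical ensemble (approximation of the flow in $\mathcal H^\sigma$, $\sigma<s$) and $\rho_{s,N}\to\rho_s$ — yields $\rho_s\big((\Phi_{\NLW}(t))^{-1}(A)\big)\le C(t,A)$ that vanishes as $\mu_s(A)\to 0$, hence $(\Phi_{\NLW}(t))_*\rho_s\ll\mu_s$. Since $\rho_s$ and $\mu_s$ are mutually absolutely continuous and $\Phi_{\NLW}(t)$ is invertible with inverse of the same type (time-reversibility of NLW), running the estimate for both $t$ and $-t$ gives $(\Phi_{\NLW}(t))_*\mu_s\ll\mu_s\ll(\Phi_{\NLW}(t))_*\mu_s$, i.e.\ quasi-invariance of $\mu_s$, which is Theorem \ref{THM:NLW}.
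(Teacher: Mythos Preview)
Your overall architecture matches the paper's: truncate, build a renormalized weighted measure, differentiate along the flow via Liouville plus a change-of-variables, establish a renormalized energy estimate, run a Yudovich-type inequality, pass to the limit, and close via mutual absolute continuity and time reversibility. But two genuine ingredients are missing.

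First, you omit the energy cutoff $\ind_{\{E_N(u,v)\le r\}}$ from the weighted measure, and it is not dispensable. Your claim that Nelson-type estimates alone give $\exp(-R_N)\in L^p(\mu_s)$ uniformly in $N$ fails as stated: the Nelson argument (Proposition~\ref{PROP:QFT}) requires the deterministic bound $-R_{s,N}(\pi_N u)\le C_r\log N$, which comes from $\int(\pi_N u)^2\le C_r$ on $\{E_N\le r\}$; without the cutoff the positive term $\tfrac32\,\sigma_N\int(\pi_N u)^2$ in $-R_{s,N}$ has no uniform control. More critically, the renormalized energy estimate (Theorem~\ref{THM:2}) is proved by \emph{interleaving} deterministic control from $E_N\le r$ with Wiener-chaos bounds: in Subsection~\ref{SUBSEC:Q}, Case~(ii.b), one portion of the trilinear sum (power $\tfrac q3$) is bounded via the energy restriction and the remaining portion (power $\tfrac{2q}3$) via hypercontractivity, the split being tuned so that the final bound is $Cp$ rather than $Cp^{3/2}$. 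A purely probabilistic chaos argument of the kind you sketch does not yield the linear-in-$p$ growth needed for Proposition~\ref{PROP:meas3}.

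Second, there is an NLW-specific issue you do not address (the paper in fact proves Theorem~\ref{THM:NLKG} first and then modifies for NLW in Section~6). Differentiating $\tfrac12\|(u,v)\|_{\H^{s+1}}^2$ along the \emph{NLW} flow produces, besides $-\int J^{2s}v\,u^3$, an extra linear term $\int J^{2s}u\,v$, which diverges $\mu_s$-a.s.\ in two dimensions. The paper's fix is to compute instead with the homogeneous operator $D=\sqrt{-\Delta}$ (so this cross term is absent), obtaining a renormalized energy $H_{s,N}$ whose quadratic part $\tfrac12\int(D^s v)^2+\tfrac12\int(D^{s+1}u)^2$ is degenerate at zero frequency; one then \emph{adds} the NLKG truncated energy $E_N$, getting the non-degenerate quadratic form~\eqref{lili}, and shows the associated Gaussian $\tilde\mu_s$ is equivalent to $\mu_s$ by Kakutani's criterion (Lemma~\ref{LEM:equiv}). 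This introduces the harmless extra term $\int u_Nv_N$ into $\partial_t E_{s,N}$. Without this detour your weighted measure is either not a probability measure or not absolutely continuous with respect to $\mu_s$, and the argument does not close.
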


We next consider the defocusing cubic nonlinear Klein-Gordon equation:
\begin{equation}\label{KG}
\partial_t^2 u-\Delta u+u+u^3=0,
\end{equation}
where $u : \T^2 \times \R \rightarrow \R$. 
As in the case of NLW, we rewrite \eqref{KG} as the first order system:
\begin{equation}\label{KG-sys}
\begin{cases}
\partial_t u=v\\
 \partial_t v=\Delta u-u-u^3.
 \end{cases}
\end{equation}

\noi
The system \eqref{KG-sys} is a Hamiltonian system of PDEs
with the Hamiltonian:
\begin{equation*}
E(u,v)=\frac{1}{2}\int_{\T^2}\big(u^2+|\nabla u|^2+v^2\big)dx +\frac{1}{4}\int_{\T^2}u^4dx
\end{equation*}

\noi
and one directly verifies that, if $(u,v)$ is a smooth solution to \eqref{KG-sys}, then
$$
\frac{d}{dt}E(u(t),v(t))=0.
$$

\noi
We again have that \eqref{KG-sys} is globally well-posed in ${\mathcal H}^\sigma(\T^2)$, $\sigma\geq 1$ (see Lemma~\ref{LEM:CP} below). 
Let us denote this global flow by $\Phi_{\NKG}(t)$, $t\in\R$.
Then, we have the following statement.

\begin{theorem}\label{THM:NLKG}
Let $s\geq 2$ be an even integer. Then,  $\mu_s$ is quasi-invariant under $\Phi_{\NKG}(t)$. 
\end{theorem}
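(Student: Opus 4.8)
The plan is to repeat, with only routine modifications, the scheme used to prove Theorem~\ref{THM:NLW}, exploiting the fact that \eqref{KG-sys} differs from \eqref{NLW-sys} only through the lower-order mass term $u$. Following the strategy of \cite{Tzvet}, instead of arguing directly with $\mu_s$ one works with a weighted Gaussian measure $d\rho_s = Z_s^{-1}e^{-R_s(u,v)}\,d\mu_s$, whose weight $R_s$ is --- modulo the Gaussian quadratic part already carried by $\mu_s$ --- a \emph{renormalized} version of the conserved energy $E$ in~\eqref{E}. The renormalization of Subsection~\ref{SUBSEC:1.4} affects only the quartic term $\tfrac14\int_{\T^2}u^4\,dx$ (and its time derivative along the flow), and this quartic structure is shared by \eqref{NLW} and \eqref{KG}; hence the renormalized NLKG energy is simply the renormalized NLW energy augmented by the everywhere-finite term $\tfrac12\int_{\T^2}u^2\,dx$, and the associated Wick constants, which depend only on $\mu_s$, are the same as in the NLW case.

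The proof then runs through the usual four steps: (i) global well-posedness of $\Phi_{\NKG}(t)$ on $\H^\sigma(\T^2)$, $\sigma\geq 1$, and of its frequency-truncated dynamics $\Phi_{\NKG,N}(t)$, already supplied by Lemma~\ref{LEM:CP}; (ii) a \emph{renormalized energy estimate}, uniform in $N$ and in the data on the support of $\mu_s$, of the schematic form
\[
\frac{d}{dt}\,\EE_t\big(\Phi_{\NKG,N}(t)(u,v)\big)\bigg|_{t=0}\;\les\;F\big(\EE_0(u,v)\big) + X(u,v),
\]
where $F$ is of slow (sublinear) growth and $X$ has finite $\mu_s$-moments of every order; (iii) a Gronwall-type argument which, combined with (ii) and the change-of-variables formula for the truncated Hamiltonian flow, yields a uniform-in-$N$ $L^p(\rho_s)$ bound for the density of $(\Phi_{\NKG,N}(t))_*\rho_s$ with respect to $\rho_s$ on every finite time interval, and hence, upon letting $N\to\infty$, quasi-invariance of $\rho_s$ under $\Phi_{\NKG}(t)$; and (iv) mutual absolute continuity of $\rho_s$ and $\mu_s$, which transfers quasi-invariance back to $\mu_s$. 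Step~(ii) is the heart of the matter: one expands $\tfrac{d}{dt}\EE_t$ along \eqref{KG-sys}, verifies that the renormalization exactly cancels the (logarithmically) divergent multilinear contributions, and estimates the remaining terms via Wick calculus, hypercontractivity of the Ornstein--Uhlenbeck semigroup, and deterministic multilinear/Strichartz estimates for the Klein--Gordon propagator.

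Finally, one checks that passing from NLW to NLKG costs nothing essential. On the linear level, the Klein--Gordon propagators $\cos(t\jb{\nabla})$ and $\jb{\nabla}^{-1}\sin(t\jb{\nabla})$ satisfy the same fixed-time and Strichartz estimates as the wave propagators --- indeed slightly better ones, since $\jb{\nabla}\geq 1$ removes the zero-frequency degeneracy --- so every deterministic input to the NLW renormalized energy estimate has an immediate NLKG analogue. On the nonlinear level, the extra term $-u$ in $\dt v = \Delta u - u - u^3$ contributes to $\tfrac{d}{dt}E$ only $-\int_{\T^2}uv\,dx$, which is precisely cancelled by the $\int_{\T^2}uv\,dx$ arising from $\tfrac12\tfrac{d}{dt}\int_{\T^2}u^2\,dx$ (reconfirming $\tfrac{d}{dt}E=0$); at the renormalized level the new terms are pointwise smoother than their NLW counterparts, as they involve $u$ rather than $\Delta u$, and are thus dominated a fortiori. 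Consequently every estimate in the proof of Theorem~\ref{THM:NLW} has a direct NLKG version, and Theorem~\ref{THM:NLKG} follows. As for NLW, the sole real difficulty is the renormalized energy estimate in step~(ii): it does not follow from linear Gaussian considerations and requires delicate bookkeeping to distinguish the genuinely divergent multilinear terms, which must be renormalized away, from the borderline ones, which are handled by combining probabilistic and deterministic estimates.
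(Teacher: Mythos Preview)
Your four-step scheme matches the paper's, but the orientation is reversed: the paper proves Theorem~\ref{THM:NLKG} in full detail first (Sections~\ref{SEC:2}--\ref{SEC:proof_th1}) and only \emph{then} derives Theorem~\ref{THM:NLW} by modification, not the other way around. NLKG is in fact the simpler model --- its Hamiltonian~\eqref{E} already controls the full $\H^1$-norm, so one works directly with $\mu_s$, whereas NLW requires passing to an auxiliary equivalent Gaussian measure $\wt\mu_s$ (Lemma~\ref{LEM:equiv}) and augmenting the renormalized energy by $E_N$. So ``routine modifications of the NLW proof'' is backwards: NLKG is the prototype, and your claim that the renormalized NLKG energy is the NLW one plus $\tfrac12\int u^2$ is not quite right (the operators $J^s$ versus $D^s$ differ, and the NLW modified energy is $H_{s,N}+E_N$, not $H_{s,N}+\tfrac12\int u^2$).

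More substantively, your step~(iii) overstates what the energy estimate delivers. Theorem~\ref{THM:2} gives $\|\dt E_{s,N}\|_{L^p(\mu_{s,N,r})}\le Cp$ with \emph{linear} growth in $p$; this critical rate does \emph{not} close to a uniform-in-$N$ $L^p$ bound on the transported density (contrast the subcritical $p^{1-\theta}$ case in Appendix~\ref{SEC:disp}). The paper instead feeds the estimate into the differential inequality of Lemma~\ref{LEM:meas2}, integrates, and optimizes over $p$ (Proposition~\ref{PROP:meas3}) to obtain a $\delta$--$\eps$ control $\rho_{s,N,r}(A)<\delta\Rightarrow\rho_{s,N,r}(\Phi_N(t)A)<\eps$ on a short time interval $[0,t_r]$, uniformly in $N$; this suffices for quasi-invariance after $N\to\infty$ (Lemma~\ref{LEM:meas4}) and then $r\to\infty$, but is strictly weaker than an $L^p$ density bound. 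Relatedly, your weighted measure $\rho_s$ omits the energy cutoff $\ind_{\{E_N\le r\}}$; without it $e^{-R_{s,N}}$ is not uniformly integrable in $N$ (one only has the logarithmic bound $-R_{s,N}\le\tfrac32\s_N\int u^2\les r\log N$ on $\{E_N\le r\}$, see~\eqref{hyp5} and Proposition~\ref{PROP:QFT}), so the cutoff is essential and the passage $r\to\infty$ must come at the very end.
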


While the proofs of Theorem~\ref{THM:NLW} and Theorem~\ref{THM:NLKG} are very similar,
it is more convenient to first prove Theorem~\ref{THM:NLKG}.
Hence, 
we shall discuss the proof of Theorem~\ref{THM:NLKG} in details and we will indicate the needed modifications leading to the proof of Theorem~\ref{THM:NLW}
in the last section of the paper.

\subsection{Remarks \& comments} \label{SUBSEC:1.3}
The restriction  that $s$ is an even integer 
in Theorems~\ref{THM:NLW} and~\ref{THM:NLKG} is not essential. 
We strongly believe that  our proof together with some classical  (in the field of dispersive PDEs) 
fractional Leibniz rule considerations provides quasi-invariance of $\mu_s$ for every $s\geq 2$.  
The extension of  Theorems~\ref{THM:NLW} and~\ref{THM:NLKG} to $s<2$ 
may also be tractable by incorporating  some of the recent development in  the low regularity 
probabilistic well-posedness of NLW and NLKG.\footnote{For example, 
the work  \cite{OTh2} on the invariant Gibbs measure for the 2-$d$ NLKG implies
quasi-invariance of $\mu_0$ under the renormalized NLKG dynamics.
For $\mu_s$ with $s > 0$, one should  not need the renormalized equation.}
 In order to highlight our renormalization argument,  
 we decided not to pursue these extensions here. 
Similarly, we believe that  our argument is applicable
to the defocusing nonlinearities of higher degrees.
For the conciseness of the  presentation, 
however, we only work with the cubic nonlinearity.
We also point out that  our argument does not extend
to the three-dimensional case. 
The proof of the main results (Theorems \ref{THM:NLW} and  \ref{THM:NLKG})  in the two-dimensional case 
is  based on a simultaneous renormalization
of the energy functional and its time derivative (Subsection \ref{SUBSEC:1.4}),
which allows us to (i)~construct
a weighted Gaussian measure associated to the renormalized energy
(Section~\ref{SEC:typical})
and (ii)~establish  a renormalized energy estimate (Theorem~\ref{THM:2}), 
 controlling the time derivative of the renormalized energy.
As we point out in  Remarks~\ref{REM:3d} and~\ref{REM:3d2}, 
both (i) and (ii) fail in the three-dimensional case.
It would be of great interest to investigate the three-dimensional case
by possibly introducing a further (simultaneous) renormalization.

In \cite{OTz, OST}, 
we studied the cubic fourth order nonlinear Schr\"odinger equation  on the circle: 
\begin{equation}
\label{4NLS0}
i \dt u = \dx^4 u + |u|^2u
\end{equation}

\noi
and proved quasi-invariance of the Gaussian measure $\nu_s$ on $L^2(\T)$ formally defined by 
\begin{align}
 d \nu_s 
  = Z_s^{-1} e^{-\frac 12 \| u\|_{H^s}^2} du 
  = Z_s^{-1} \prod_{n \in \Z} e^{-\frac 12 \jb{n}^{2s} |\ft u_n|^2}   d\ft u_n , 
\label{nu}
 \end{align}

\noi
provided that $s > \frac 12$.
In \cite{OST}, we also showed that the dispersion is essential
for this quasi-invariance result.
More precisely, we considered 
 the following dispersionless model on~$\T$:
\begin{equation}\label{ND1}
i \partial_t u = |u|^2u
\end{equation}

\noi
and showed that the Gaussian measure $\nu_s$ is {\it not} quasi-invariant
under the flow of \eqref{ND1}.
In a similar manner, we believe that 
the dispersive term is crucial 
in order to establish the quasi-invariance result  in Theorem~\ref{THM:NLW}, 
no matter how large $s$ is.  
It is quite likely that the method of \cite{OST} can be adapted to show 
that the transport of $\mu_s$ under the (well defined) flow of 
\begin{equation}\label{ODE}
\begin{cases}
\partial_t u=v\\
 \partial_t v=-u^3
 \end{cases}
\end{equation}

\noi
is  not equivalent to  $\mu_s$ (for non-trivial times). 
Indeed, we expect that the flow of \eqref{ODE} introduces fast time oscillations,  
modifying some fine regularity properties which hold true typically with respect to 
the Gaussian measure $\mu_s$.

As it is well known,
 the solutions to NLW  can be decomposed 
 as the linear evolution plus a ``one-derivative smoother term". 
 On the other hand,  the typical Sobolev regularity on the support of $\mu_s$
is  ${\mathcal H}^{\sigma}(\T^2)$, $\sigma<s$. 
The Cameron-Martin theorem in this context 
states that 
for a fixed $(h_1,h_2)\in \H^{\sigma+1}(\T^2)$,  the transport of $\mu_s$ under the shift
$$
(u,v)\longmapsto (u,v)+(h_1,h_2)
$$ 

\noi
is singular with respect to the original measure $\mu_s$. 
Therefore,  the results in Theorems~\ref{THM:NLW} and~\ref{THM:NLKG} 
represent remarkable statements, displaying fine properties of the vector fields generating $\Phi_{\NLW}(t)$ 
and $\Phi_{\NKG}(t)$. 
Moreover, we believe that the results of Theorems~\ref{THM:NLW} and~\ref{THM:NLKG} are completely out of reach of Ramer's result \cite{RA} for which we would need $(2+\eps)$-smoothing on the nonlinear term.
See \cite{Tzvet, OTz} for further discussion on this topic.

According to  \cite{Bo_proceeding}, Gel'fand asked whether, in the context of Gibbs measures for Hamiltonian PDEs, one may show the quasi-invariance of the corresponding Wiener measure by a direct method. 
Our result gives some light on Gel'fand's question because now we have a method 
to directly prove quasi-invariance of a large class of Gaussian measures  supported by functions of varying regularities
for the nonlinear wave equations. 
We should also admit that our present understanding of the corresponding question for the (more complicated) nonlinear Schr\"odinger equations is quite poor.

Our main results state that the transported measure 
 $\mu_s^t:= \Phi_{\NLW}(t)_*\mu_s$ by $\Phi_{\NLW}(t)$ (or $\Phi_{\NKG}(t)$)
  is absolutely continuous with respect to $\mu_s$.
Therefore, it has a well defined Radon-Nikodym derivative
$f(t, u, v) := \frac{d  \mu_s^t}{d\mu_s}(u, v) \in L^1(d\mu_s)$. 
It would be very interesting to obtain some further properties
of the densities $f(t,u,v)$. 
We believe that a combination of our analysis and the argument in 
\cite[Corollaire 2.2 on p.\,197]{Cru1} leads to a higher integrability
of the Radon-Nikodym derivative: $f(t,u,v)\in L^p(d\mu_s(u,v))$, $p<\infty$. See also Corollary~\ref{COR:WT} below,  where the $L^2$-integrability of the Radon-Nikodym derivative is involved. 
It also seems of interest  to establish some compactness properties in $t$ of $f(t,u,v)$ and to study the time averages of $f(t,u,v)$. 

One of the consequences of our quasi-invariance results is
the following probabilistic persistence of additional regularity (= integrability)
of the solution.
Let  $(u(0), v(0))$ be initial data distributed according to the Gaussian measure $\mu_s$.
Then, it follows from the Gaussian nature of the initial data
 that 
$(u(0), v(0))$ belongs to any Sobolev spaces 
$W^{\s,p}(\T^2)
\times W^{\s-1,p}(\T^2)$, $p \leq \infty$,  
and also to H\"older spaces $\mathcal{C}^\s(\T^2)
\times \mathcal{C}^{\s-1}(\T^2)$, where $\mathcal{C}^\s(\T^2) = B^\s_{\infty, \infty}(\T^2)$,
provided that $\s < s$. 
The quasi-invariance of $\mu_s$
guarantees  the additional regularity of the global solution $(u(t), v(t))$
in the sense that, for any $t \in \R$, the solution $(u(t), v(t))$ almost surely belongs to the same Sobolev and H\"older spaces.
Such propagation of Sobolev and H\"older regularities
for general dispersive PDEs
seems to be beyond deterministic analysis at this point.

We conclude this subsection by pointing out a  connection
of our quasi-invariance results
with wave turbulence theory \cite{ZLF, Naz}.
The main goal of wave turbulence  theory is to obtain a statistical description of the out-of-equilibrium dynamics
given by  a  nonlinear dispersive PDE (for an unknown function $u(x, t)$).
Here, 
randomness enters through the initial data $u(0)$ whose Fourier coefficients $\{\ft u_n(0)\}_{n \in \Z^d}$, 
are assumed to be independent complex-valued Gaussian 
random variables with mean zero and some variance (depending on $n$, often of the form $\jb{n}^{-\al}$).
Then, by introducing  the following two-point function:\footnote{We point out that 
if both the underlying equation and the distribution of $u(0)$ are translation invariant (in space), 
then we have 
\[\E\big[ \ft u_n(t)\cj{\ft u_m(t)}\big] = 0\]

\noi
for any $t \in \R$, 
unless $n = m$. Namely, the initial uncorrelation at time 0 propagates
for all times in the translation invariant setting.}
\begin{align}
N(n, t) = \E\big[ |\ft u_n(t)|^2\big], 
\label{X0}
\end{align}

\noi
one aims to derive an effective closed system of equations
(called the kinetic equations) for the evolution of $\{N(n, t)\}_{n \in \Z^d}$
and study its stationary solutions.
Note that the two-point functions represent
 the spectral density  of the random field $u(t)$
 and hence the kinetic equations provide evolution equations for this spectral density.

Now, let us make a connection between the study of the two-point functions 
\eqref{X0} in wave turbulence theory 
and our quasi-invariance results.
In the following, we work in a general setting, 
which applies to  the situation in our previous works \cite{Tzvet, OTz, OST, OTT} and also in this paper.
For simplicity of the presentation, we consider the scalar case.
Namely, 
let $\mu = \nu_s$ be the Gaussian measure defined in \eqref{nu}
and consider a nonlinear dispersive PDE on $\T^d$ for a scalar function $u$ (such as \eqref{4NLS0})
with random initial data $u(0) = \varphi$ distributed by $\mu$.
In particular, we have
\begin{equation}
\varphi (x) = \sum_{n \in \Z^d} \frac{\gf_n(\o)}{\jb{n}^{s}}e^{in\cdot x}, 
\label{X00}
\end{equation}

\noi
where $\{ \gf_n \}_{n \in \Z^d}$ is a  sequence of independent\footnote{In the real-valued setting,
we need to impose $\gf_{-n} = \cj{\gf_n}$ as in \eqref{series}.
See \cite{Tzvet} for example.}
standard
 complex-valued  Gaussian random variables 
on a probability space $(\O, \F, P)$.
We assume that solutions exist globally in time
and hence the solution map $\Phi(t): u(0)= \varphi \mapsto u(t)$ is well defined.
Furthermore, we assume that the Gaussian measure $\mu$
is quasi-invariant under $\Phi(t)$.
Note that this is precisely the situation in 
 \cite{Tzvet, OTz, OST, OTT}.

\begin{remark}\rm
In the setting of this paper, 
we need to transform the vector-valued solution $(u, v)$
to NLW \eqref{NLW-sys} or  NLKG \eqref{KG-sys}
into a scalar (complex-valued) function $w = \frac{1}{\sqrt 2}u + \frac{i}{\sqrt 2} \jb{\nb}^{-1} v$.
If $(u(0), v(0))$ is distributed according to the Gaussian measure $\mu_s$
in \eqref{gauss0}, 
namely they are given by the random Fourier series in \eqref{series}, 
then, by setting $\gf_n = \jb{n}^{s+1} \ft w_n(0)$, $n \in \Z^2$,  
we see that 
$\{ \gf_n \}_{n \in \Z^2}$ forms a  sequence of independent
standard  complex-valued  Gaussian random variables.
 Hence, $w(0) = \varphi$ is distributed according to the Gaussian measure $\mu = \nu_{s+1}$
 and $\varphi$ is given by the random Fourier series in \eqref{X00} (with $s$ replaced by $s+1$).
Indeed, independence of $\gf_n$ and $\gf_{-n}$, $n \ne 0$,  can be seen by writing
them as 
\begin{align*}
\gf_n & = \frac{\Re g_n - \Im h_n}{\sqrt 2} + i 
\frac{\Im g_n + \Re h_n}{\sqrt 2}, \\
\gf_{-n} &  = \frac{\Re g_n + \Im h_n}{\sqrt 2} + i 
\frac{- \Im g_n + \Re h_n}{\sqrt 2},
\end{align*}

\noi
where we used $g_{-n} = \cj{g_n}$ and $h_{-n} = \cj{h_n}$. 
Therefore, Theorems \ref{THM:NLW} and \ref{THM:NLKG}
imply that, for $s \in 2\N$,  the Gaussian measure $\mu = \nu_{s+1}$ is quasi-invariant 
under the dynamics of $w(t) = \Phi(t) w(0)$.
Here,  the solution map $\Phi(t)$ for $w$
is given by 
\[ \Phi(t) (w(0)) :=  \frac{1}{\sqrt 2} \Phi_1 (t)(u(0)) + \frac{i}{\sqrt 2} \jb{\nb}^{-1}  \Phi_2 (t)(v(0)),\]

\noi
where  $\Phi_1(t)$ and $\Phi_2(t)$ denote the first and second
components of the (vector-valued) solution map $\Phi_\text{NLW}(t)$ or $\Phi_\text{NLKG}(t)$.
\end{remark}

Under the assumptions above, we state the following corollary
to our quasi-invariance results in the general setting.
This  corollary
allows us to express the two-point functions in terms of the Radon-Nikodym derivative.

\begin{corollary}\label{COR:WT}
Let  $\mu$ be the  quasi-invariant measure under $\Phi(t)$ as above. 
We denote by  $\mu^t = \Phi(t)_* \mu$ the pushforward of $\mu$ under $\Phi(t)$
and by $\frac{d \mu^t}{d\mu}$  its Radon-Nikodym derivative. 
Suppose that $\frac{d \mu^t}{d\mu} \in L^2( d\mu)$ for some $t \in \R$.
Then, we have 
\begin{align}
N(n, t) 
= \int |\ft{\varphi}(n)|^2  \frac{d \mu^t}{d\mu}(\varphi) d\mu (\varphi)
\label{X0a}
\end{align}
 
 \noi
for any $n \in \Z^d$,  
where $N(n, t)$ is the two-point function defined in \eqref{X0}.
 
\end{corollary}

Corollary \ref{COR:WT} reduces 
the study of  the two-point functions $\{N(n, t)\}_{n \in \Z^d}$ 
in wave turbulence theory
 to  studying the dynamical property of the Radon-Nikodym
derivative
$\frac{d \mu^t}{d\mu}$.
This shows the importance of 
establishing the quasi-invariance property of the Gaussian measures
from the  viewpoint of wave turbulence theory.
It also shows the importance of establishing a higher moment bound
on the Radon-Nikodym derivative. 
Furthermore, 
by viewing $\varphi$ as 
\[\varphi : \o \in \O \mapsto \varphi^\o = \sum_{n \in \Z^d} \frac{\gf_n(\o)}{\jb{n}^s}e^{in \cdot x},\]

\noi 
we can rewrite \eqref{X0a} as 
\begin{align}
N(n, t) 
= \int_\O \frac{|\gf_n(\o)|^2 - 1}{ \jb{n}^{2s}}  \frac{d \mu^t}{d\mu}(\varphi(\o)) P(d\o)
+ \frac{1}{\jb{n}^{2s}},
\label{Y1}
\end{align}

\noi
since $\mu = P\circ \varphi^{-1}$ by definition.
%
%
Hence,  it suffices to study
the projection of the Radon-Nikodym derivative $\frac{d \mu^t}{d\mu}$
onto the subclass of the Wiener homogeneous chaoses of order two
spanned by $\{|\gf_n|^2 - 1\}_{n \in \Z^d}$.
See also Remark \ref{REM:WT1}.

\begin{proof}[Proof of Corollary \ref{COR:WT}]
By the definition of $\mu^t = \Phi(t)_* \mu$, we have
\begin{align}
\mu^t (A) = \mu (\Phi(-t) A) = \int \ind_{\{\Phi(t) \varphi \in A\}} d\mu(\varphi).
\label{X1}
\end{align}

\noi
On the other hand, we have
\begin{align}
\mu^t (A) =  \int \ind_{\{ \varphi \in A\}} d\mu^t(\varphi)
=  \int \ind_{\{ \varphi \in A\}} \frac{d \mu^t}{d\mu}(\varphi) d\mu(\varphi), 
\label{X2}
\end{align}

\noi
where the existence of the Radon-Nikodym derivative $\frac{d \mu^t}{d\mu}$
is guaranteed by the  quasi-invariance of $\mu$ under $\Phi(t)$.
Hence, from \eqref{X1} and \eqref{X2}, we obtain
\begin{align}
\int \ft{\Phi(t)\varphi}(n)\cj{\ft{\Phi(t)\varphi}(m)}  d\mu (\varphi)
= \int \ft{\varphi}(n)\cj{\ft{\varphi}(m)} \frac{d \mu^t}{d\mu}(\varphi) d\mu (\varphi).
\label{X3}
\end{align}

\noi
In particular, when $n = m$, this yields  \eqref{X0a}. 
\end{proof}

\begin{remark}\label{REM:WT1}\rm

(i)
In the setting of \cite{Tzvet, OTz, OST, OTT} and this paper, 
both  the solution map $\Phi(t)$ and the Gaussian measure $\mu$ are translation invariant (in space).
Hence, we have 
\begin{align}
\int \ft{\Phi(t)\varphi}(n)\cj{\ft{\Phi(t)\varphi}(m)}  d\mu(\varphi)
= 0 
\label{X4}
\end{align}

\noi
for $n \ne m$.
Then, it follows from \eqref{X3} and~\eqref{X4} that
\begin{align}
 \int_\O \gf_n(\o) \cj{\gf_m(\o)}\frac{d \mu^t}{d\mu}(\varphi(\o)) P(d\o)
 = 0
 \label{Y2}
\end{align}

\noi
\noi
for any $n \ne m$, 
provided that $\frac{d \mu^t}{d\mu} \in L^2( d\mu)$.
This shows that 
the projection of the Radon-Nikodym derivative $\frac{d \mu^t}{d\mu}$
onto a particular subclass of the Wiener homogeneous chaoses of order two
(i.e.~the span of $\{\gf_n \cj{\gf_m}\}_{n , m\in \Z^d, n \ne m}$)
is 0.

\smallskip

\noi
(ii) If $\jb{n}^{-2s}$ happens to describe
an invariant power spectrum for the underlying dynamics, 
namely  $N(n, t)$ is independent of time for any $n \in \Z^d$, then 
it follows from \eqref{Y1} and \eqref{Y2} that 
\begin{align*}
 \int_\O \gf_n(\o) \cj{\gf_m(\o)}\frac{d \mu^t}{d\mu}(\varphi(\o)) P(d\o)
 = \dl_{nm}, 
\end{align*}

\noi
completely determining the (time-independent) second order coefficients
of the Wiener chaos expansion of the Radon-Nikodym derivative
$\frac{d\mu^t}{d\mu}$.

\end{remark}


\subsection{Renormalized energy}
\label{SUBSEC:1.4}

We now derive the renormalized energies associated to NLKG \eqref{KG-sys}. 
As already mentioned, these renormalized energies 
and the related energy estimates are the main novelty of this work.
 Such 
 renormalizations  usually appear in the context of low regularity solutions. 
 We find it interesting that, 
in our problem, 
 even for large $s$ (very regular solutions), 
 we are obliged to appeal to a renormalization in constructing a modified energy. 
 The analysis of the Benjamin-Ono equation \cite{TV} is another example,
 where we need to use renormalizations even for regular solutions, but in a much more perturbative manner
 as  compared to the analysis in this paper. 

In the study of the transport of $\mu_s$ under the flow of \eqref{KG-sys}, we pass to the limit $N\rightarrow\infty$ in the truncated model:
\begin{equation}\label{KG-sys_N}
\begin{cases}
\partial_t u=v\\
\partial_t v=\Delta u-u-\pi_N((\pi_N u)^3),
\end{cases}
\end{equation}

\noi
where $\pi_N$ denotes the Dirichlet projector onto the frequencies $\{|n| \leq N\}$. 
Then, it is easy to see that the low frequency part $E(\pi_N u, \pi_N v)$ of the energy
and  the truncated energy: 
\begin{align}
E_N(u,v) \label{E2}
& =\frac{1}{2}\int_{\T^2}\big(u^2+|\nabla u|^2+v^2\big)dx +\frac{1}{4}\int_{\T^2}(\pi_N u)^4dx  \\
& = E(\pi_N u, \pi_N v)+ \| (\pi_N^\perp u,\pi_N^\perp v)\|_{{\mathcal H}^{1}}^2
\notag
\end{align}

\noi
are  conserved under the flow of \eqref{KG-sys_N},
where $\pi_N^\perp  = \text{Id} - \pi_N$. 
Therefore, as in the case of the untruncated NLKG \eqref{KG-sys}, 
the Cauchy problem for \eqref{KG-sys_N} is still globally well-posed in ${\mathcal H}^\s(\T^2)$, $\s\geq 1$.

Denote $\pi_N u$ and $\pi_N v$ by $u_N$ and  $v_N$, respectively.
Taking into account the definition~\eqref{gauss0} 
of the Gaussian measure $\mu_s$, it is natural to study the expression 
$$
\frac{1}{2}\frac{d}{dt}\| (u_N(t),v_N(t))\|_{{\mathcal H}^{s+1}}^2,
$$
where $(u,v)$ is a solution to the truncated NLKG \eqref{KG-sys_N}.
A direct computation yields 
\begin{align}
\frac{1}{2}\frac{d}{dt}\| (u_N(t),v_N(t))\|_{{\mathcal H}^{s+1}}^2
&  = 
\dt \bigg[\frac 12  \int_{\T^2} (J^s v_N)^2 + \frac 12 \int_{\T^2} (J^{s+1}  u_N)^2
\bigg] \notag\\ 
&  =  \int_{\T^2} (J^{2s} v_N)(-u_N^3) ,
\label{H1a}
\end{align}
where
$$
J :=  \sqrt {1 - \Dl}.
$$

\noi
In particular, when  $s=0$, the term on the right-hand side is 
$$
-\frac{1}{4}\partial_t\bigg[
\int_{\T^2}u_N^4
\bigg]
$$

\noi
and thus we recover the conservation of 
(the low frequency part of) the 
energy $E(u_N,v_N)$. 

Let $s \geq 2$ be  an even integer.
By  the Leibniz rule, we have
\begin{align}
\int_{\T^2} (J^{2s} v_N)(-u_N^3)
& =
 -3\int_{\T^2} J^sv_N J^s u_N\, u_N^2 \notag \\
& \hphantom{X}
+ \sum_{\substack{ |\al|+|\be|+|\g| \leq s\\
 |\al|,|\be|,|\g|<s
}}
c_{\al,\be,\g}
\int_{\T^2}
J^sv_N\cdot\dd^\al u_N\cdot \dd^\be u_N\cdot \dd^\g u_N
\label{H1b}
\end{align}

\noi
for some inessential constants $c_{\al,\b,\g}$.
Furthermore, recalling that $\text{vol}(\T^2) = 1$, we can write 
\begin{align}
-3\int_{\T^2} J^sv_N & J^s u_N\,  u_N^2
 = - \frac 32 \dt \bigg[\int_{\T^2} (J^su_N)^2u_N^2 \bigg]
+ 3 \int_{\T^2} (J^su_N)^2\, v_N u_N 
\notag \\
& =
  - \frac 32 \dt\bigg[ \int_{\T^2} \P_{\ne 0} [(J^su_N)^2] \,  \P_{\ne 0}[u_N^2] \bigg]
+ 3 \int_{\T^2} \P_{\ne 0}[(J^su_N)^2]\, \P_{\ne 0}[v_N  u_N]  
\notag \\
& \hphantom{X}-   \frac 32 \dt \bigg[\int_{\T^2} (J^s u_N)^2 \int_{\T^2}  u_N^2\bigg]
+   3  \int_{\T^2} (J^s u_N)^2\int  v_N  u_N ,
\label{H1}
\end{align}

\noi
where $\P_{\ne 0}$ is the projection onto non-zero frequencies:
$\P_{\ne  0} f := f - \int_{\T^2} f$.
Here, the last two terms\footnote{Namely, we have issues
at the level of {\it both} the energy and its time derivative.} on the right-hand side of \eqref{H1} are problematic because, 
in view of \eqref{series}, we have
\begin{align}
 \s_N := \E_{\mu_s}\bigg[\int_{\T^2} (J^s u_N)^2\bigg] 
= \sum_{\substack{n \in \Z^2\\|n|\leq N}} \frac{1}{\jb{n}^2}\sim \log N
\longrightarrow \, \infty
\label{sigma}
\end{align}

\noi
as $N\to\infty$ (one may also show that we have an almost sure  divergence). 
Therefore, we need to introduce a suitable  renormalization
to treat the difficulty both at the level of the $\H^{s+1}$-energy functional and its time derivative
at the same time.

With $\s_N$ defined above, we can rewrite the last two terms on the right-hand side of \eqref{H1} as 
\begin{align}
 -   \frac 32 & \dt \bigg[\int_{\T^2} (J^s u_N)^2 \int  u_N^2\bigg]
+   3  \int_{\T^2} (J^s u_N)^2\int  v_N u_N \notag\\
& = 
 -   \frac 32 \dt \bigg[\bigg(\int_{\T^2} (J^s u_N)^2-  \s_N\bigg)\int_{\T^2}  u_N^2\bigg]
+   3 \bigg( \int_{\T^2} (J^s u_N)^2 - \s_N\bigg)\int  v_N  u_N.
\label{H2}
\end{align}

\noi
Note that the term 
$$
\int_{\T^2} (J^s u_N)^2-  \s_N
$$
is now a ``good" term since, as we shall see below, we have 
$$
\bigg\|
\int_{\T^2} (J^s \pi_N u)^2-  \s_N
\bigg\|_{L^p(d\mu_s(u,v))}\leq Cp,
$$

\noi
for any finite $p \geq 2$, 
where the constant $C>0$ is independent of $p$ and $N$.
In view of the above discussion, it is now natural to define  the {\it renormalized  energy} 
 $E_{s, N}(u,v)$ by 
\begin{equation}
E_{s, N}(u,v)  = \frac 12  \int (J^s v)^2 + \frac 12 \int (J^{s+1}  u)^2
+ \frac 32  \int (J^s \pi_N u)^2  (\pi_N u)^2 - \frac 32 \s_N  \int (\pi_N u)^2.
\label{Es}
\end{equation}

\noi
By writing $E_{s, N}(u,v)$ as 
\begin{align}
E_{s, N}(u,v)  = & \ \frac 12  \int_{\T^2} (J^s v)^2 + \frac 12 \int_{\T^2}(J^{s+1}  u)^2
+ \frac 32  \int \P_{\ne 0} [(J^su_N)^2] \,  \P_{\ne 0}[u_N^2] \notag \\
& +    \frac 32 \bigg(\int_{\T^2} (J^s u_N)^2-  \s_N\bigg)\int_{\T^2}  u_N^2,
\label{H3}
\end{align}

\noi
it follows from 
\eqref{H1a}, \eqref{H1b}, \eqref{H1}, and \eqref{H2} that,   
if $(u,v)$ is a solution to \eqref{KG-sys_N},  then
we have
\begin{align}
\dt E_{s, N}(u_N,v_N) 
&  =  
 3 \int_{\T^2}  \P_{\ne 0}[ (J^s u_N)^2  ] \, \P_{\ne 0} [v_N  u_N ]
+   3 \bigg( \int_{\T^2} (J^s u_N)^2 - \s_N\bigg)\int_{\T^2}  v_N  u_N
\notag \\
& \hphantom{XX}
+ \sum_{\substack{ |\al|+|\be|+|\g|\leq s\\
|\al|,|\be|,|\g|<s
}}
c_{\alpha,\beta,\gamma}
\int_{\T^2}
J^sv_N\cdot \dd^\al u_N \cdot \dd^\be u_N \cdot \dd^\g u_N.
\label{H8}
\end{align}

Now all terms on the right-hand side of \eqref{H8} are suitable for a perturbative analysis. Here is the precise statement.

\begin{theorem}\label{THM:2}
Let $s\geq 2$ be an even integer and let us denote by $\Phi_N(t)$ the flow of  \eqref{KG-sys_N}. 
Then,   given  $r>0$,  there is a constant $C>0$ such that 
$$
\Bigg\{\int_{\{E_N(u,v)\leq r\}} 
\Big|\dt E_{s, N}(\pi_N\Phi_N(t)(u,v))\vert_{t=0} \Big|^p d\mu_s(u,v)\Bigg\}^{\frac{1}{p}}\leq Cp
$$

\noi
for every $p\geq 2$ and every $N\in \N$.
\end{theorem}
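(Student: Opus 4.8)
The plan is to start from the identity \eqref{H8}. Since $\Phi_N(0)$ is the identity map, evaluating \eqref{H8} at $t=0$ shows that $\dt E_{s, N}(\pi_N\Phi_N(t)(u,v))\vert_{t=0}$ is exactly the right-hand side of \eqref{H8} with $(u_N,v_N)$ replaced by $(\pi_N u,\pi_N v)$, where $(u,v)$ is distributed according to $\mu_s$. So we must bound, in $L^p(d\mu_s)$ over the set $\{E_N\le r\}$ and uniformly in $N$, the three groups of terms there: the resonant term $\int\P_{\ne 0}[(J^su_N)^2]\,\P_{\ne 0}[v_Nu_N]$, the renormalized term $\big((J^su_N)^2\text{-integral}-\s_N\big)\int v_Nu_N$, and the multilinear terms $\int J^s v_N\,\dd^\al u_N\,\dd^\be u_N\,\dd^\g u_N$ with $|\al|+|\be|+|\g|\le s$ and $|\al|,|\be|,|\g|<s$. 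The main tools will be: (i) the Wiener chaos (hypercontractivity) estimate $\|X\|_{L^p(d\mu_s)}\le (p-1)^{k/2}\|X\|_{L^2(d\mu_s)}$ for $X$ in the homogeneous chaoses of order $\le k$, together with its consequence that ``Gaussian'' quantities such as $\|\pi_N u\|_{H^{s-\eps}}^2$, $\|\pi_N v\|_{H^{s-1-\eps}}^2$ and $\int(J^s\pi_N u)^2-\s_N$ lie in the second chaos and have $L^p(d\mu_s)$ norm $\le Cp$ uniformly in $N$ (the last is the estimate announced in Subsection~\ref{SUBSEC:1.4}); (ii) the energy cutoff: on $\{E_N\le r\}$ one has $\|u\|_{H^1}^2\le 2r$ and $\|v\|_{L^2}^2\le 2r$, hence $\big|\int v_Nu_N\big|\le\|\pi_N u\|_{L^2}\|\pi_N v\|_{L^2}\le 2r$ pointwise; (iii) Sobolev embedding, the fractional Leibniz rule and Gagliardo--Nirenberg interpolation on $\T^2$.

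The renormalized term is immediate: by (ii) it is bounded on $\{E_N\le r\}$ by $2r\,\big|\int(J^su_N)^2-\s_N\big|$, whose $L^p$-norm is $\le Cp$. For the resonant term we first reorganize: using $\int\P_{\ne 0}[f]=0$ and Wick-ordering $(J^su_N)^2$ by subtracting $\s_N$, the divergent constant cancels and one is left with $\int\big((J^su_N)^2-\s_N\big)v_Nu_N$ minus a term of the renormalized type. The key point is that after this cancellation the remaining expression lies in a fixed Wiener chaos and its $L^2(d\mu_s)$-norm is bounded uniformly in $N$ --- the projections $\P_{\ne 0}$ having removed precisely the logarithmically divergent self-contraction of $(J^su_N)^2$ --- so that, after splitting it into its top homogeneous component (handled like the multilinear terms below) and its strictly lower-order components (handled, via the cutoff on the $v_Nu_N$ factor, like the renormalized term), one obtains the bound $\le Cp$.

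The multilinear terms are the heart of the matter and the step I expect to be the main obstacle. Here one writes $\int J^sv_N\,\dd^\al u_N\,\dd^\be u_N\,\dd^\g u_N=\int v_N\,J^s\big(\dd^\al u_N\,\dd^\be u_N\,\dd^\g u_N\big)$ (or pairs $J^sv_N$ with the cubic factor by duality), uses the energy cutoff to absorb \emph{deterministically} all the low-regularity ($H^1$, resp.\ $L^2$) content of the factors, and then --- via the fractional Leibniz rule and Gagliardo--Nirenberg interpolation against the cutoff bound $\|u\|_{H^1}^2\le 2r$ --- reduces the leftover high-regularity content to a controlled power of the second-chaos quantities $\|\pi_N u\|_{H^{s-\eps}}$ and $\|\pi_N v\|_{H^{s-1-\eps}}$. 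The hypotheses $|\al|,|\be|,|\g|<s$ and $|\al|+|\be|+|\g|\le s$ are used exactly at this point: they ensure that no single factor $\dd^\al u_N$ is forced onto the borderline space $H^s$ (whose $\mu_s$-norm diverges like $\s_N$), that all frequency sums arising in the $L^2$ computations converge on $\Z^2$, and --- once the $H^1$-content has been interpolated away --- that the remaining power of the good second-chaos quantities stays low enough to produce a \emph{linear} bound in $p$. The delicate part is precisely this bookkeeping: a careful, configuration-by-configuration distribution of the derivatives, putting as little regularity as possible on the factors that are controlled only through the cutoff, is what prevents a superlinear power of $p$ from appearing. Once the bound $\le Cp$ is obtained for each individual multi-index, summing over the finitely many $(\al,\be,\g)$ and applying Minkowski's inequality completes the proof.
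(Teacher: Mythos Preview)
Your treatment of the renormalized term $Q_2$ is correct and matches the paper. The gap is in your handling of $Q_1$ and of the critical multilinear terms in $Q_3$: the scheme ``energy cutoff absorbs the low-regularity content, then Gagliardo--Nirenberg interpolation reduces the rest to a power of the second-chaos norms $\|u_N\|_{H^{s-\eps}}$, $\|v_N\|_{H^{s-1-\eps}}$'' cannot produce the linear bound~$Cp$.

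Consider $Q_1$. Each factor $J^s u_N$ sits exactly at the borderline regularity: under $\mu_s$ one has $J^s u_N\in H^{-\eps}$ but not in $L^2$, so no interpolation against the energy norm $\|u_N\|_{H^1}$ can convert $\|J^s u_N\|_{L^q}$ into a bounded multiple of $\|u_N\|_{H^{s-\eps}}$. If instead you keep both $J^s u_N$ factors as Gaussian inputs (together with, say, $\|\P_{N_4}u_N\|_{L^\infty}$), you are estimating a degree-$3$ or degree-$4$ chaos and hypercontractivity only gives $p^{3/2}$ or $p^2$. Your claim that the ``top homogeneous component'' of $Q_1$ can be handled like the multilinear terms runs into the same wall: that component genuinely involves two critical factors $J^s u_N$, and the projector $\P_{\ne 0}$ removes the divergent \emph{expectation} but does not make $(J^s u_N)^2$ deterministically bounded. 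The same issue arises for the worst $Q_3$ terms (after integration by parts one recovers exactly a $Q_1$-type expression with one factor at order~$s$).

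What is missing is a genuinely hybrid estimate at the level of dyadic pieces. The paper performs a Littlewood--Paley decomposition $Q_1=\sum_{\bf N}Q_1^{\bf N}$ and distinguishes frequency configurations. The delicate case is $N_4\ll N_1^{1/100}$ and $N_3\gtrsim N_1^{1-a}$: there the paper writes, inside the $L^p(\mu_s)$ norm,
\[
|Q_1^{\bf N}|\;\lesssim\;|Q_1^{\bf N}|^{1/3}\cdot|Q_1^{\bf N}|^{2/3},
\]
bounds the first factor \emph{deterministically} via the energy cutoff (incurring a loss $N_1^{(2s-1)/3}$), and bounds the second factor via a degree-$3$ Wiener chaos estimate on the trilinear sum in $(\ft u_{n_1},\ft u_{n_2},\ft v_{n_3})$ (yielding a gain $N_1^{(2/3)(-s+O(a))}$ together with the factor $(p^{3/2})^{2/3}=p$). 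The exponents are tuned precisely so that the dyadic loss and gain combine to a summable $N_1^{-1/3+o(1)}$ \emph{and} the $p$-dependence is exactly linear. This interpolation \emph{inside} a single $L^p(\mu_s)$ norm, between a deterministic energy bound and a higher-order chaos bound, is the main technical idea of the proof and is absent from your proposal.
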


This probabilistic energy estimate on the renormalized energy $E_{s, N}$
is the main novelty of this paper.
We will present the proof of Theorem \ref{THM:2} in Section \ref{SEC:Q}.

\begin{remark}\rm
It is worthwhile to note that the introduction of the renormalization 
at the level of the energy also 
introduces a renormalization at the level of the time derivative of the energy.
Namely, by the argument above, we renormalized  both the $\H^{s+1}$-energy functional and its time derivative at the same time.
See \eqref{H2}, \eqref{H3}, and \eqref{H8}.

\end{remark}

\begin{remark}\label{REM:disp}\rm
Consider the following dispersion generalized  NLKG:
\begin{align}
 \dt^2 u + J^{2\be} u  + u^3 = 0
\label{NLKG4}
\end{align}

\noi
for $\be > 1$.
With $v = \dt u $, we can rewrite \eqref{NLKG4} 
as
\begin{equation}
\begin{cases}
\partial_t u=v\\
\partial_t v=-J^{2\be}u -u^3.
\end{cases}
\label{NLKG5}
\end{equation}

\noi
For this equation, 
we define the Gaussian measure $\mu_s^\be$ by 
\begin{align}
 d \mu_s^\be = Z_{s, \be}^{-1} \,
e^{- \frac 12 \int (J^{s+\be}  u)^2-\frac 12  \int (J^s v)^2 } du dv.
\label{mu_b}
\end{align}

\noi
Then, a typical element $(u^\o, v^\o)$ is given by the following random Fourier series:
\begin{equation*}
u^\o(x) = \sum_{n \in \Z^2} \frac{g_n(\o)}{\jb{n}^{s+\be}}e^{in\cdot x}
\qquad\text{and}\qquad
v^\o(x) = \sum_{n \in \Z^2} \frac{h_n(\o)}{\jb{n}^{s}}e^{in\cdot x},
\end{equation*}

\noi
where $\{g_n\}_{n \in \Z^2}$ and  $\{h_n\}_{n \in \Z^2}$
are as in \eqref{series}.
Then, it is easy to see that \noi
$(u^\o, v^\o)$ belongs to 
\[H^{s + \be - 1 - \eps}(\T^2) \times H^{s-1-\eps}(\T^2)\]
almost surely for any $\eps > 0$.
In particular, for $\be > 1$, we have $u \in H^s(\T^2)$ almost surely.
In fact, we have $u \in W^{s, p}(\T^2)$ for any $ p \leq \infty$ almost surely.
This implies that 
$\int_{\T^2}  (J^s u)^2 u^2  < \infty$
almost surely
and hence there is no need to introduce a renormalized energy.
See Appendix~\ref{SEC:disp}.

Therefore, when $\be >1$,  one can proceed as in~\cite{Tzvet}
and prove quasi-invariance of 
$\mu_s^\be$ under the flow of~the dispersion generalized NLKG \eqref{NLKG4}.
In particular, when $\be = 2$,
\eqref{NLKG4} corresponds to the nonlinear beam equation on $\T^2$,
which is the borderline case for Ramer's argument
on $\T^2$
(namely, ~still non-trivial).
The same remark applies to 
the dispersion generalized NLW:
\begin{align*}
& \dt^2 u + (-\Dl)^{\be} u  + u^3 = 0.
\end{align*}

\end{remark}

\subsection{Organization of the remaining part of the manuscript}

We complete this section by introducing some notations. 
In the next section, 
we present the well known arguments assuring the existence of well-defined dynamics
in $\H^\s(\T^2)$, $\s \geq 1$. 
In Section~\ref{SEC:typical}, 
we define a weighted Gaussian measure  absolutely continuous with respect to $\mu_s$.
This weighted Gaussian measure is 
adapted to the renormalized energy $E_{s, N}$
and its  transport with respect to the truncated NLKG dynamics  $\Phi_{N}(t)$ 
is easier to handle.
Section~\ref{SEC:Q} will be devoted to the proof of Theorem~\ref{THM:2}. 
In Section~\ref{SEC:proof_th1}, 
we employ the arguments essentially introduced in our previous works \cite{Tzvet, OTz} to complete the proof of Theorem~\ref{THM:NLKG} for NLKG.
The last section is devoted to the extension of Theorem~\ref{THM:NLKG} to the case 
of the ``usual"  nonlinear wave equation (Theorem~\ref{THM:NLW}). 
In Appendix \ref{SEC:disp}, 
we briefly discuss the case of the dispersion generalized NLKG.

\subsection{Notation}
For a multi-index $\alpha=(\alpha_1,\alpha_2)\in\Z_{\geq 0}^2$, 
we set $|\alpha|=\alpha_1+\alpha_2$. 
For a frequency  $n=(n_1,n_2)\in\Z^2$, 
we set  $|n|=(n_1^2+n_2^2)^{\frac{1}{2}}$ and $\jb{n}=
(1+n_1^2+n_2^2)^{\frac{1}{2}}$.

Given  $N\in \N$, we denote  the projectors $\P_N$ and $\pi_N$  by 
$$
(\P_N u)(x)=
\sum_{N\leq \jb{n} < 2N}  \ft{ u}_{n}   \, e^{in\cdot x}
$$
and 
$$
(\pi_N u)(x)=
\sum_{ |n| \leq N}  \ft{ u}_{n}  \, e^{in\cdot x}.
$$

\noi
We also set
\[\pi_N^\perp  = \text{Id} - \pi_N. \]

\noi
We will consider the Littlewood-Paley decomposition of the form
$$
u=  \sum_{ N \geq 1, \text{\,dyadic}} \P_N u.
$$

Given $r>0$, we define $\mu_{s,N, r}$ as 
\begin{align}
d \mu_{s,N, r}(u,v)=  \ind_{\{E_N(u,v) \leq r\}} \,  d \mu_s(u,v),
\label{gauss4}
\end{align}

\noi
where $E_N(u, v)$ is the conserved energy for the truncated NLKG dynamics defined in \eqref{E2}.
Note that we do not normalize $\mu_{s, N, r}$ to be a probability measure.
We also set $\mu_{s, r} = \mu_{s, \infty, r}$.

Given $R>0$ and $\sigma\in \R$, we define 
the ball $B_{R, \s} \subset \H^\s(\T^2)$ by  
$$
B_{R,\sigma}= \big\{ (u,v)\in {\mathcal H}^\sigma(\T^2)\,:\, \|(u,v)\|_{{\mathcal H}^\sigma}\leq R\big\}.
$$

\section{On the well-posedness and approximation property 
of the truncated NLKG dynamics}
\label{SEC:2}

In this section, we briefly go over the well-posedness theory
of  the following Cauchy problem for the truncated NLKG:
\begin{equation}\label{KG-sys_CP}
\begin{cases}
\partial_t u=v\\
 \partial_t v=\Delta u-u-\pi_N((\pi_N u)^3)\\
 ( u, v)|_{t = 0} = (u_0, v_0), 
 \end{cases}
\end{equation}

\noi
where $N\geq 1$. We also allow $N=\infty$ with the convention $\pi_{\infty}={\rm Id}$. 
We have the following (well-known) result.

\begin{lemma}\label{LEM:CP}
Let $\s \geq 1$ and $N \in \N \cup \{\infty\}$.
Then, the truncated NLKG \eqref{KG-sys_CP} is globally well-posed in $\H^\s(\T^2)$.
Namely, given any  $(u_0,v_0)\in {\mathcal H}^\sigma(\T^2)$, 
there exists a unique global solution to \eqref{KG-sys_CP} in $C(\R; {\mathcal H}^\s(\T^2))$
and,  moreover, the dependence on initial data is continuous. 
If we denote by $\Phi_N(t)$ the  data-to-solution map at time $t$, then $\Phi_N(t)$ is a continuous bijection on ${\mathcal H}^\s(\T^2)$ for every $t\in\R$, satisfying the semigroup property:
$$
\Phi_N(t+\tau)=\Phi_N(t)\circ \Phi_N(\tau) 
$$

\noi
for any $t, \tau \in \R$.
\end{lemma}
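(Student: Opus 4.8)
The plan is to prove Lemma~\ref{LEM:CP} by the standard contraction mapping argument followed by a coercivity-based global-in-time bound, treating the cases $N < \infty$ and $N = \infty$ in parallel since the nonlinearity $(u,v)\mapsto (v, \Delta u - u - \pi_N((\pi_N u)^3))$ has the same structural features in both. First I would recast \eqref{KG-sys_CP} in Duhamel form using the Klein--Gordon propagator: writing $\jb{\nabla} = J = \sqrt{1-\Delta}$, a solution satisfies
\begin{align}
u(t) &= \cos(tJ)\, u_0 + \frac{\sin(tJ)}{J}\, v_0 - \int_0^t \frac{\sin((t-\tau)J)}{J}\, \pi_N\big((\pi_N u(\tau))^3\big)\, d\tau, \notag \\
v(t) &= \dt u(t). \notag
\end{align}
Since $\cos(tJ)$ and $\sin(tJ)/J$ are bounded operators on $H^\s$ (indeed a bounded family for $t$ in compact sets), and since $\H^\s(\T^2) = H^\s \times H^{s-1}$ with $\s \geq 1$ gives $H^\s(\T^2) \hookrightarrow L^\infty(\T^2)$ an algebra (recall $\s \geq 1 > \frac{2}{2}=1$ is borderline, but for $\s=1$ one uses $H^1(\T^2)\hookrightarrow L^p$ for all $p<\infty$ together with the smoothing $J^{-1}$ to gain the needed integrability, or simply takes $\s \geq 1$ and notes $\pi_N$ truncation makes everything trivial for $N<\infty$), the map $u \mapsto u^3$ (and $\pi_N((\pi_N u)^3)$) is locally Lipschitz from $H^\s$ to $H^{s-1}$. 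I would then set up the contraction in $C([-T,T]; \H^\s(\T^2))$ on a ball of radius $2\|(u_0,v_0)\|_{\H^\s}$, with $T = T(\|(u_0,v_0)\|_{\H^\s}) > 0$, obtaining local existence, uniqueness, and Lipschitz continuous dependence on the data by the usual fixed-point estimates.

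Next I would promote this to a global result using the conserved energy. For $N < \infty$, the truncated energy $E_N(u,v)$ from \eqref{E2} is conserved along $\Phi_N(t)$, and since $E_N(u,v) = E(\pi_N u,\pi_N v) + \|(\pi_N^\perp u,\pi_N^\perp v)\|_{\H^1}^2 \geq \frac12\|(u,v)\|_{\H^1}^2$ (the quartic term being nonnegative, the defocusing sign is essential here), the $\H^1$-norm stays bounded on the whole existence interval; combined with the subcritical nature of the local theory this rules out finite-time blowup in $\H^1$, hence in $\H^\s$ for $\s \geq 1$ by a persistence-of-regularity argument (differentiate the equation, or iterate the Duhamel estimate with the already-controlled lower norm feeding the nonlinear term). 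For $N = \infty$ the energy $E(u,v)$ from \eqref{E} is conserved and the same coercivity gives the a priori $\H^1$ bound; persistence of regularity again upgrades to $\H^\s$. The semigroup property $\Phi_N(t+\tau) = \Phi_N(t)\circ\Phi_N(\tau)$ is immediate from uniqueness (both sides solve the same Cauchy problem with data $\Phi_N(\tau)(u_0,v_0)$ at time $t=0$, after a time shift, and the equation is autonomous), and bijectivity of $\Phi_N(t)$ on $\H^\s(\T^2)$ follows with inverse $\Phi_N(-t)$; continuity of $\Phi_N(t)$ is the continuous dependence already established.

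The step I expect to require the most care is the borderline Sobolev embedding at $\s = 1$: in two dimensions $H^1(\T^2)$ just fails to embed into $L^\infty$, so the naive algebra estimate $\|u^3\|_{H^{s-1}} \lesssim \|u\|_{H^\s}^3$ is not literally available at the endpoint $\s=1$. I would handle this by using the one degree of smoothing from the propagator $\sin((t-\tau)J)/J$ more carefully — estimating the Duhamel term in $H^1$ rather than asking $u^3 \in L^2$ — together with $H^1(\T^2) \hookrightarrow L^6(\T^2)$, which gives $u^3 \in L^2(\T^2) = H^0$ and hence $J^{-1}(u^3) \in H^1$; this closes the local theory cleanly at $\s = 1$ (and for $\s > 1$ the algebra estimate $H^{\s-1}(\T^2)$ combined with $H^\s \hookrightarrow L^\infty$ is standard). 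For $N < \infty$ all of this is moot since $\pi_N$ projects onto finitely many frequencies and the nonlinear term is a smooth (in fact polynomial) map on every $\H^\s$; the substance is entirely in the $N = \infty$ case, which is where the classical subcritical NLKG well-posedness theory on $\T^2$ applies verbatim.
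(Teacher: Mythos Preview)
Your proposal is correct and follows essentially the same route as the paper: Duhamel formulation, contraction in $C([-T,T];\H^\s)$ using the Sobolev embedding $H^1(\T^2)\hookrightarrow L^6(\T^2)$ to handle the cubic term at the endpoint $\s=1$, global extension via conservation of $E_N$ (and its coercivity in $\H^1$), and the flow properties from uniqueness and time reversibility. The one streamlining the paper makes is to record the \emph{tame} estimate
\[
\big\|(F_1(u),F_2(u))\big\|_{\H^\s} \lesssim \|u\|_{H^\s}\|u\|_{H^1}^2
\]
(via the fractional Leibniz rule $\|J^\s(u^3)\|_{L^2}\lesssim \|J^\s u\|_{L^6}\|u\|_{L^6}^2$ and $H^1\hookrightarrow L^6$), so that the local existence time depends only on $\|(u_0,v_0)\|_{\H^1}$; this lets one iterate directly without a separate persistence-of-regularity argument.
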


When  $N=\infty$, 
 we simply denote $\Phi_{\infty}(t)= \Phi_\NKG(t)$
by $\Phi(t)$ in the following.

\begin{proof}
By rewriting \eqref{KG-sys_CP} in the Duhamel formulation, we have
\begin{equation}\label{int_eq}
(u(t),v(t))=\bar{S}(t)(u_0,v_0)+\big(F_1(u)(t),F_2(u)(t)\big),
\end{equation}

\noi
where 
$$
\bar{S}(t)(u_0, v_0) =(S(t)(u_0,v_0), \partial_t S(t)(u_0,v_0))
$$

\noi
with
\begin{align*}
S(t)(u_0,v_0)& =\cos(tJ)u_0+J^{-1}\sin(tJ)v_0,\\
\partial_t S(t)(u_0,v_0)& =-J \sin(tJ)u_0+\cos(tJ)v_0,
\end{align*}

\noi
and
\begin{align*}
F_1(u)(t)& =-\int_{0}^t J^{-1}\sin((t-\tau)J)\, \pi_N\big(
 (\pi_N u)^3\big)(\tau)d\tau,\\
F_2(u)(t)& =-\int_{0}^t \cos((t-\tau)J)\, \pi_N\big(
 (\pi_N u)^3\big)(\tau) d\tau.
\end{align*}

\noi
By a fixed point argument with the Sobolev embedding, 
one can easily solve  \eqref{int_eq} locally in time  
in $C([-T,T]; {\mathcal H}^\s(\T^2))$ for some small  $T = T\big(\|(u_0,v_0)\|_{{\mathcal H}^1}\big)>0$. 
This claim immediately follows
from the boundedness (in fact, unitarity)
of  $\bar{S}(t)$  on ${\mathcal H}^\sigma(\T^2)$ for all $\sigma\in\R$ 
and 
\begin{equation}\label{nonlinear}
\big\|\big(F_1(u)(t),F_2(u)(t)\big)\big\|_{{\mathcal H}^\sigma(\T^2)}\les \|u\|_{H^\sigma(\T^2)}\|u\|_{H^1(\T^2)}^2
\end{equation}

\noi
 for any $\s\geq 1$.
The tame  estimate \eqref{nonlinear} is a consequence of the fractional Leibniz  rule: 
\[
\|J^{\s-1} (u^3)\|_{L^2(\T^2)}\les \|J^{\s-1} u\|_{L^6(\T^2)}\|u\|_{L^6(\T^2)}^2
\]

\noi
and the Sobolev embedding:~$H^1(\T^2)\subset L^6(\T^2)$
and  ensures that the local existence time depends only on 
$\|(u_0,v_0)\|_{{\mathcal H}^1}$. 
The conservation of the truncated energy $E_N(u, v)$ defined in \eqref{E2}
provides an a priori bound on
$\|(u(t),v(t))\|_{{\mathcal H}^1}$, allowing us 
to iterate the local existence result and extend the local solutions globally in time. 
The flow properties are a standard consequence of the time reversibility of \eqref{KG-sys_CP}.
This completes the proof of Lemma~\ref{LEM:CP}.
\end{proof}

\begin{remark} \label{REM:3d0}
\rm
Note that  Lemma~\ref{LEM:CP} also holds in the three-dimensional case
because we also have the Sobolev embedding $H^1(\T^3)\subset L^6(\T^3)$.

\end{remark}

We also have the following approximation property of the truncated dynamics
\eqref{KG-sys_CP}.

\begin{lemma}\label{LEM:approx}
Let $\sigma\geq 1$, $t_0\in\R$,  and $K$ be a compact set in $\H^\s(\T^2)$. 
Then,  for every $\eps>0$,
there exists  $N_0\in \N$ such that 
\begin{equation*}
\|\Phi(t)(u,v)-\Phi_N(t)(u,v)\|_{\H^\sigma(\T^2)}<\eps
\end{equation*}

\noi
for any $t \in [0, t_0]$, any $(u,v)\in K$,  and any $N\geq N_0$
and hence 
\begin{equation*}
\Phi(t)(K)\subset \Phi_N(t)(K+B_{\eps,\sigma}).
\end{equation*}

\noi
for  any $t \in [0, t_0]$ and
any $N\geq N_0$.

\end{lemma}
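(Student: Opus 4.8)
The plan is to run a standard energy/Duhamel continuity argument, made uniform in the truncation parameter $N$ and in the data over the compact set $K$. Without loss of generality I would take $t>0$. Since $K$ is compact in $\H^\s(\T^2)$ it lies in some ball $B_{R,\s}$, hence is bounded in $\H^1(\T^2)$. The truncated energy $E_N$ from \eqref{E2} dominates $\|(u,v)\|_{\H^1}^2$ (the quartic term being nonnegative) and is itself bounded on bounded subsets of $\H^1(\T^2)$ (the quartic term being controlled by $\|u\|_{H^1}^4$ via the Sobolev embedding $H^{1/2}(\T^2)\hookrightarrow L^4(\T^2)$); the same holds for $E$. Hence their conservation along $\Phi_N(\tau)$ and $\Phi(\tau)$ yields a bound $A_1=A_1(R)$ on $\|\Phi_N(\tau)(u,v)\|_{\H^1}$ and $\|\Phi(\tau)(u,v)\|_{\H^1}$, valid for all $\tau\in\R$, all $N$, and all $(u,v)\in K$. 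Feeding this into the Duhamel formula \eqref{int_eq} together with the tame estimate \eqref{nonlinear} (which holds uniformly in $N$, since $\pi_N$ is bounded on every $H^\sigma$) and Gronwall's inequality on $[0,t]$, I would then upgrade this to a uniform bound $A_\s=A_\s(R,t)$ on $\|\Phi_N(\tau)(u,v)\|_{\H^\s}$ and $\|\Phi(\tau)(u,v)\|_{\H^\s}$ for $\tau\in[0,t]$.

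Next I would estimate the difference. Fix $(u,v)\in K$, write $(u(\tau),v(\tau))=\Phi(\tau)(u,v)$, $(u^N(\tau),v^N(\tau))=\Phi_N(\tau)(u,v)$, and $(w,z)=(u-u^N,v-v^N)$. Subtracting the two Duhamel formulas (the initial data being the same), the forcing term for $(w,z)$ is $u^3-\pi_N\big((\pi_N u^N)^3\big)$, which I would split as $\pi_N^\perp(u^3)+\pi_N\big(u^3-(\pi_N u^N)^3\big)$ and, in the second summand, as $u^3-(\pi_N u^N)^3=(\pi_N^\perp u+\pi_N w)\big(u^2+u\,\pi_N u^N+(\pi_N u^N)^2\big)$. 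Using that $H^\s(\T^2)$ is an algebra for $\s\geq 1$ together with the uniform bounds $A_1,A_\s$, the terms carrying the factor $\pi_N w$ contribute $\les_{A_\s}\|w(\tau)\|_{H^\s}$, to be absorbed by Gronwall, while the remaining terms are bounded by $\eta_N(\tau):=\|\pi_N^\perp(u^3(\tau))\|_{H^\s}+C(A_\s)\|\pi_N^\perp u(\tau)\|_{H^\s}$. The resulting integral inequality on $[0,t]$ and Gronwall then give $\sup_{(u,v)\in K}\|(w(t),z(t))\|_{\H^\s}\les_{A_\s,t}\sup_{\tau\in[0,t],\,(u,v)\in K}\eta_N(\tau)$.

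The heart of the matter is therefore to show $\sup_{\tau\in[0,t],\,(u,v)\in K}\eta_N(\tau)\to 0$ as $N\to\infty$, and this is the step I expect to be the main obstacle, in the sense that the uniformity has to be extracted with care rather than by a soft argument. The idea is that $\mathcal{K}:=\{\Phi(\tau)(u,v):\tau\in[0,t],\ (u,v)\in K\}$ is the continuous image of the compact set $[0,t]\times K$ under $\Phi$ (using the continuity statement in Lemma~\ref{LEM:CP}), hence compact in $\H^\s(\T^2)$; since cubing is continuous from $H^\s$ into $H^\s$, the family $\{u^3(\tau)\}$ also ranges over a compact subset of $H^\s(\T^2)$. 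For any fixed compact set $\mathcal{K}'\subset H^\s(\T^2)$ one has $\sup_{g\in\mathcal{K}'}\|\pi_N^\perp g\|_{H^\s}\to 0$ (cover $\mathcal{K}'$ by finitely many small balls, and combine $\|\pi_N^\perp\|_{H^\s\to H^\s}\leq 1$ with $\|\pi_N^\perp g\|_{H^\s}\to 0$ at the centres), and applying this to the two compact families above gives the desired uniform decay of $\eta_N$. Choosing $N_0$ so large that the right-hand side above is $<\eps$ for $N\geq N_0$ proves the norm estimate.

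Finally, for the stated inclusion I would use that $\Phi_N(t)$ is a bijection on $\H^\s(\T^2)$ (Lemma~\ref{LEM:CP}): given $p=\Phi(t)(u,v)\in\Phi(t)(K)$, set $q:=\Phi_N(-t)(p)$, so that $p=\Phi_N(t)(q)$; applying the norm estimate already proved, but now at time $-t$ and to the compact set $\Phi(t)(K)$, gives $\|q-(u,v)\|_{\H^\s}=\|\Phi_N(-t)(p)-\Phi(-t)(p)\|_{\H^\s}<\eps$ for $N$ large, whence $q\in(u,v)+B_{\eps,\s}\subset K+B_{\eps,\s}$ and therefore $\Phi(t)(K)\subset\Phi_N(t)(K+B_{\eps,\s})$.
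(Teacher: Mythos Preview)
Your proposal is correct and follows the same approach the paper sketches (the identity $u^3-\pi_N\big((\pi_N u)^3\big)=\pi_N^\perp(u^3)+\pi_N\big(u^3-(\pi_N u)^3\big)$ together with the estimates from Lemma~\ref{LEM:CP}, then a Gronwall and a compactness argument for the uniformity). Two minor slips worth fixing: $H^\s(\T^2)$ is \emph{not} an algebra at the endpoint $\s=1$, so replace that appeal by the tame trilinear bound underlying \eqref{nonlinear} (fractional Leibniz plus $H^1\hookrightarrow L^6$), which gives $\|fgh\|_{H^{\s-1}}\les\|f\|_{H^\s}\|g\|_{H^1}\|h\|_{H^1}$ and is all you need for the Gronwall step; relatedly, the Duhamel forcing only has to be controlled in $H^{\s-1}$, so $\eta_N$ should carry $\|\pi_N^\perp(u^3)\|_{H^{\s-1}}$ rather than $H^\s$---this also makes your ``cubing is continuous, hence the cubes form a compact set'' step honest at $\s=1$, since $u\mapsto u^3$ is continuous $H^1\to L^2$ but not $H^1\to H^1$. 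Your derivation of the inclusion $\Phi(t)(K)\subset\Phi_N(t)(K+B_{\eps,\sigma})$ by applying the first part to the compact set $\Phi(t)(K)$ at time $-t$ is the right way to turn the norm estimate into the set inclusion.
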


The proof of Lemma~\ref{LEM:approx} is based on the identity 
$$
u^3-\pi_N\big(
(\pi_N u)^3
\big)=
\pi_N^\perp(u^3)+\pi_N\big(u^3-(\pi_N u)^3\big)
$$

\noi
and the estimates  in the proof of Lemma~\ref{LEM:CP}. 
 In our previous works~\cite{Tzvet, OTz}, 
 we presented the details of the approximation argument
 analogous to 
  Lemma~\ref{LEM:approx}
 and thus we omit details.

\section{Weighted Gaussian measure
associated to the renormalized energy}\label{SEC:typical}

In this section, we construct a weighted Gaussian measure $\rho_{s, N, r}$
associated
 to the renormalized energy $E_{s, N}$ introduced in Subsection~\ref{SUBSEC:1.4}.
We will study its transport properties in Section~\ref{SEC:proof_th1}.

Let  $r>0$ and $N\geq 1$.
In view of \eqref{gauss0} and \eqref{Es}, 
 we define a weighted Gaussian measure $\rho_{s, N, r}$  by 
\begin{align}
d\rho_{s, N, r}(u,v) 
& = \text{``}Z_{s,N, r}^{-1}\ind_{\{E_N(u,v) \leq r\}}e^{-E_{s, N}(u, v) }du dv\text{''}\notag\\
& = Z_{s,N, r}^{-1} \ind_{\{E_N(u,v) \leq r\}}e^{-  R_{s, N}(\pi_{N}u)} d \mu_s(u,v) ,
\label{K0}
\end{align}

\noi
where $E_N(u,v)$ is the conserved energy for the truncated NLKG defined in \eqref{E2}
and $R_{s, N}(u)$ is defined by 
\begin{align}
R_{s, N}(u) 
& = 
 \frac 32  \int_{\T^2} (J^s u)^2  u^2 - \frac 32 \s_N  \int_{\T^2} u^2.
\label{K1}
\end{align}

Our goal in this section is to prove the  following statement.

\begin{proposition}\label{PROP:QFT}
Let $ s > 0$  and $r>0$.
Then, given $ p < \infty$, 
there exists $C > 0$ such that 
\begin{equation}\label{vtoro}
 \Big\|\ind_{\{E_N(u,v) \leq r\}}e^{-  R_{s, N}(\pi_{N}u)}\Big\|_{L^p(d\mu_s(u,v))}\leq C
\end{equation}

\noi
for every $N\geq 1$.
Moreover, there exists $R_s(u)\in L^p(d\mu_s(u,v))$ such that 
\begin{equation}\label{parvo}
\lim_{N\rightarrow\infty}
R_{s, N}(\pi_{N}u)=R_s(u)\qquad \text{in } L^p(d\mu_s(u,v))
\end{equation}
and 
\begin{equation}\label{treto}
\lim_{N\rightarrow\infty}\ind_{\{E_N(u,v) \leq r\}}e^{-  R_{s, N}(\pi_{N}u)}=\ind_{\{E(u,v) \leq r\}}e^{-  R_s(u)}
\qquad \text{in } L^p(d\mu_s(u,v)).
\end{equation}
\end{proposition}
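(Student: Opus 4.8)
The plan is to reduce everything to a single quantitative estimate on the random variable
\[
Q_N := \int_{\T^2} (J^s \pi_N u)^2 - \s_N = \int_{\T^2} \P_{\ne 0}\big[(J^s\pi_N u)^2\big] + \Big(\int_{\T^2}(J^s\pi_N u)^2 - \s_N\Big),
\]
viewed as an element of the inhomogeneous Wiener chaos of order $2$ associated with $\mu_s$. First I would observe that, since $s > 0$, the field $\pi_N u$ lies in $L^\infty(\T^2) \cap W^{s',p}(\T^2)$ for every $s' < s$ and $p < \infty$, so the only source of divergence in $R_{s,N}(\pi_N u)$ comes from the top-order piece $\int (J^s\pi_N u)^2 u^2$ through the diagonal contribution, which is exactly what the subtraction of $\frac 32 \s_N \int u^2$ removes. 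Concretely I would write
\[
R_{s,N}(\pi_N u) = \frac 32 \int_{\T^2} \Big(\int_{\T^2}(J^s\pi_N u)^2 - \s_N\Big)(\pi_N u)^2 \, \text{(constant-in-}x\text{ part)} + \tfrac 32\int_{\T^2}\P_{\ne 0}\big[(J^s\pi_N u)^2\big]\,\P_{\ne 0}\big[(\pi_N u)^2\big],
\]
or rather keep it in the more symmetric Wick-ordered form; the key point is that each summand, after expanding in Fourier modes, is a finite linear combination of products $g_{n_1}g_{n_2}g_{n_3}g_{n_4}$ with the diagonal terms already cancelled, so it sits in the second Wiener chaos (up to lower-order pieces).

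The heart of the argument is then to show $\|R_{s,N}(\pi_N u)\|_{L^p(d\mu_s)} \le C p$ uniformly in $N$, i.e. a subexponential tail. I would do this by computing the $L^2(d\mu_s)$ norm of $R_{s,N}(\pi_N u)$ directly from the Fourier series \eqref{series}: after Wick ordering, the variance is a sum over frequency quadruples $n_1 + n_2 + n_3 + n_4 = 0$ of terms like $\jb{n_1}^{-2}\jb{n_2}^{-2}\jb{n_3}^{-2-2s}\jb{n_4}^{-2-2s}\jb{n_1-n_2}^{2s}$ (the $J^{2s}$ falling on a single pair after pairing), which converges in two space dimensions precisely because the $\log$-divergent diagonal has been subtracted; one checks the sum is bounded uniformly in $N$ by a standard frequency-counting / Schur-type estimate. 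Similarly one shows $R_{s,N}(\pi_N u) - R_{s,M}(\pi_M u) \to 0$ in $L^2$ as $M, N \to \infty$, giving a Cauchy sequence and hence the limit $R_s(u) \in L^2(d\mu_s)$ of \eqref{parvo} for $p = 2$. The passage from $L^2$ to general $p < \infty$ with the linear-in-$p$ bound is then automatic from the Wiener chaos hypercontractivity estimate (Nelson's estimate): for a random variable in the union of the first $k$ Wiener chaoses, $\|\cdot\|_{L^p} \le (p-1)^{k/2}\|\cdot\|_{L^2}$, which with $k = 2$ gives exactly $\le C p$; I expect this chaos/hypercontractivity step — specifically, verifying the relevant sums over frequency quadruples converge uniformly in $N$ in the $d = 2$ endpoint case — to be the main obstacle, as it is where the renormalization is doing real work rather than being cosmetic.

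Granting the $L^p$ bound $\|R_{s,N}(\pi_N u)\|_{L^p} \le Cp$ and the $L^p$-convergence $R_{s,N}(\pi_N u) \to R_s(u)$, the remaining assertions follow by soft arguments. For \eqref{vtoro}: by Hölder it suffices to bound $\|e^{-R_{s,N}(\pi_N u)}\|_{L^q}$ for each finite $q$, and since $-R_{s,N}$ can be large positive (the quartic integrand $(\pi_N u)^2$ is nonnegative but the coefficient $Q_N$ is sign-indefinite), I would use the indicator $\ind_{\{E_N(u,v)\le r\}}$ to control the genuinely dangerous part: on this set $\int(\pi_N u)^4 \le 4r$, hence $\int(\pi_N u)^2 \le 2\sqrt r$ by Cauchy–Schwarz, so $-R_{s,N}(\pi_N u) \le \tfrac 32\big(\text{something} \big)$ where the only unbounded factor is $|Q_N|$ paired against a quantity bounded by $\sqrt r$; combined with the Gaussian tail $\mu_s(|Q_N| > \lambda) \le e^{-c\lambda}$ coming from the $Cp$ bound via Chebyshev, one gets $\ind_{\{E_N \le r\}}e^{-R_{s,N}} \in L^q$ uniformly in $N$. (Here one uses that the non-constant piece $\int \P_{\ne 0}[(J^s\pi_N u)^2]\P_{\ne 0}[(\pi_N u)^2]$ is more easily controlled, e.g. by $\|J^s\pi_N u\|_{L^4}^2\|\pi_N u\|_{L^4}^2$ on the energy set modulo an $L^p(d\mu_s)$ bound on $\|J^s\pi_N u\|_{L^4}$.) Finally \eqref{treto}: from \eqref{parvo} one extracts an a.s.-convergent subsequence, $\ind_{\{E_N(u,v)\le r\}} \to \ind_{\{E(u,v)\le r\}}$ a.s. (using $E_N(u,v) \to E(u,v)$ a.s. by dominated convergence in the $L^4$ piece, plus $\mu_s(\{E(u,v) = r\}) = 0$ since $E$ has a continuous distribution), so the integrands converge $\mu_s$-a.s.; then the uniform $L^{p+\e}(d\mu_s)$ bound from \eqref{vtoro} gives uniform integrability and hence $L^p$-convergence by Vitali's theorem, and since the limit is independent of the subsequence the full sequence converges.
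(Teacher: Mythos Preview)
Your approach to \eqref{parvo} is essentially the paper's (packaged there as a separate lemma showing $\|F_N-F_M\|_{L^p(\mu_s)}\le Cp^2M^{-\theta}$ via an $L^2$ computation over frequency quadruples plus hypercontractivity), and your argument for \eqref{treto} via uniform higher integrability and Vitali is likewise what the paper does. One minor slip: $R_{s,N}(\pi_N u)$ is a polynomial of degree \emph{four} in the Gaussians $\{g_n\}$, not two, so hypercontractivity gives $\|R_{s,N}\|_{L^p}\lesssim p^2\|R_{s,N}\|_{L^2}$ rather than $p$; this is harmless for \eqref{parvo} and \eqref{treto}.

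The genuine gap is in your argument for \eqref{vtoro}, and this --- not the frequency-sum computation you flag --- is where the real difficulty lies. After writing $R_{s,N}(\pi_N u)=\tfrac32 Q_N\int(\pi_N u)^2+\tfrac32\int\P_{\ne 0}[(J^s\pi_N u)^2]\,\P_{\ne 0}[(\pi_N u)^2]$, you correctly control the first piece on $\{E_N\le r\}$ through the exponential tail of the chaos-two variable $Q_N$. But your proposed bound on the off-diagonal piece fails: under $\mu_s$ the field $J^s\pi_N u$ has pointwise variance $\sigma_N\sim\log N$, so $\|J^s\pi_N u\|_{L^4}^2$ has expectation $\sim\log N$ and is \emph{not} uniformly bounded in any $L^p(\mu_s)$. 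More fundamentally, that off-diagonal piece lives in the fourth Wiener chaos and hence has only an $e^{-c\sqrt\lambda}$ tail, which is insufficient for uniform exponential integrability. By decomposing you have thrown away the crucial sign: the undecomposed quartic satisfies $\int(J^s\pi_N u)^2(\pi_N u)^2\ge 0$, so directly
\[
-R_{s,N}(\pi_N u)\le \tfrac32\,\sigma_N\int(\pi_N u)^2\le C_r\log N\qquad\text{on }\{E_N\le r\}.
\]
This deterministic \emph{logarithmic} upper bound is the missing ingredient, and it must be paired with the quantitative rate $\|F_N-F_{N_0}\|_{L^p}\le Cp^2N_0^{-\theta}$ (not merely the Cauchy property) in a Nelson-type argument: given $\lambda\ge 1$, pick $N_0$ with $\log\lambda=2C_r\log N_0$; for $N<N_0$ the set $\{-F_N>\log\lambda\}\cap\{E_N\le r\}$ is empty, while for $N\ge N_0$ one estimates $\mu_{s,N,r}(-F_N>\log\lambda)\le\mu_s(|F_N-F_{N_0}|>\tfrac12\log\lambda)\le Ce^{-c(\log\lambda)^{1/2}N_0^{\theta/2}}$, which decays like $e^{-c'\lambda^{\theta'}}$ and integrates against any power of $\lambda$.
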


Proposition \ref{PROP:QFT} allows us 
to define the limiting weighted Gaussian measure $\rho_{s,  r}$ by 
\begin{align}
d\rho_{s, r}(u,v) 
& = Z_{s, r}^{-1} \ind_{\{E(u,v) \leq r\}}e^{-  R_s(u)} d \mu_s(u,v).
\label{QFT1}
\end{align}

\noi
Moreover, we have  the following `uniform convergence' property of $\rho_{s, N, r}$
to $\rho_{s, r}$;  given any   $\eps > 0$, there exists $N_0 \in \N$ such that 
\begin{align}
 |  \rho_{s,  r}(A) - \rho_{s, N, r}(A)| < \eps 
\label{rho_s}
\end{align}

\noi
for any $N \geq N_0$
and  any measurable set 
$A \subset \H^\s(\T^2)$, $\s < s$.

In the following, we first state several lemmas.
We  then present the proof of Proposition~\ref{PROP:QFT}
at the end of this section.
We first recall the following Wiener chaos estimate
\cite[Theorem~I.22]{Simon}.
See also \cite[Proposition~2.4]{TTz}.

\begin{lemma}\label{LEM:hyp}
 Let $\{ \gf_n\}_{n \in \N }$ be 
 a sequence of  independent standard real-valued Gaussian random variables.
Given  $k \in \mathbb{N}$, 
let $\{P_j\}_{j \in \N}$ be a sequence of monomials in 
$\bar  \gf = \{ \gf_n\}_{n \in \N }$ of  degree at most $k$, namely, 
$P_j = P_j (\bar \gf)$ is of the form
$P_j = c_j \prod_{i = 1}^{k_j} \gf_{n_i} $
with $k_j \leq k$ and $n_1, \dots, n_{k_j} \in \N$.
Then, for $p \geq 2$, we have
\begin{equation*}
 \bigg\|\sum_{j \in \N} P_j(\bar \gf) \bigg\|_{L^p(\O)} \leq (p-1)^\frac{k}{2} \bigg\|\sum_{j \in \N} P_j(\bar \gf) \bigg\|_{L^2(\O)}.
 \end{equation*}

\end{lemma}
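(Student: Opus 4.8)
The plan is to deduce the estimate from Nelson's hypercontractivity theorem for the Ornstein--Uhlenbeck (Hermite) semigroup, combined with the Wiener--It\^o chaos decomposition. Write $\gamma$ for the law of $\bar g = \{g_n\}_{n\in\N}$, so that $L^2(\O)$ is identified with $L^2(\gamma)$, which admits the orthogonal decomposition $L^2(\O) = \bigoplus_{m\geq 0}\mathcal{W}_m$ into the homogeneous Wiener chaoses $\mathcal{W}_m$, where $\mathcal{W}_m$ is the closed span of the products $\prod_n H_{\alpha_n}(g_n)$ of Hermite polynomials with $\sum_n \alpha_n = m$. The key structural observation is that a polynomial in $\bar g$ of degree at most $k$ lies in the finite sum $\bigoplus_{m=0}^k \mathcal{W}_m$; hence so does $F := \sum_{j\in\N} P_j(\bar g)$, and we may write $F = \sum_{m=0}^k F_m$ with $F_m \in \mathcal{W}_m$.

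Next I would recall the two analytic inputs. First, for $0\leq \rho \leq 1$ the Ornstein--Uhlenbeck semigroup $T_\rho$ (defined by Mehler's formula) acts diagonally on the chaos decomposition, namely $T_\rho = \rho^m\,\mathrm{Id}$ on $\mathcal{W}_m$. Second, Nelson's hypercontractivity theorem (see \cite{Simon}) asserts that $T_\rho\colon L^2(\gamma)\to L^p(\gamma)$ is a contraction precisely when $\rho \leq (p-1)^{-\frac12}$; in particular $\|T_\rho G\|_{L^p(\O)} \leq \|G\|_{L^2(\O)}$ for $\rho = (p-1)^{-\frac12}$ and every $G\in L^2(\O)$. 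These two facts are what convert the mere $L^2$-orthogonality of the chaoses into an $L^p$ bound.

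The conclusion then follows by applying $T_\rho$ to the \emph{inverse} of $F$ rather than to $F$ itself. Fix $\rho = (p-1)^{-\frac12}\leq 1$ (valid since $p\geq 2$) and set $G := \sum_{m=0}^k \rho^{-m} F_m$, so that $T_\rho G = \sum_{m=0}^k \rho^{-m}\rho^m F_m = F$. By Nelson's theorem and then the $L^2$-orthogonality of the $F_m$,
\begin{align*}
\|F\|_{L^p(\O)} = \|T_\rho G\|_{L^p(\O)}
&\leq \|G\|_{L^2(\O)}
= \Big( \sum_{m=0}^{k} \rho^{-2m} \|F_m\|_{L^2(\O)}^2 \Big)^{\frac12} \\
&\leq \rho^{-k} \Big( \sum_{m=0}^{k} \|F_m\|_{L^2(\O)}^2 \Big)^{\frac12}
= (p-1)^{\frac{k}{2}} \|F\|_{L^2(\O)},
\end{align*}
where the middle inequality uses $\rho^{-2m}\leq \rho^{-2k}$ for $0\leq m\leq k$ (valid since $\rho\leq 1$) and the last equality again uses orthogonality together with $\rho^{-k} = (p-1)^{k/2}$. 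This is exactly the claimed bound.

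Two points deserve care. The genuinely deep ingredient is Nelson's hypercontractivity estimate itself: I would either invoke it as a black box, as in \cite{Simon}, or, for a self-contained treatment, derive it from the two-point (Bonami--Beckner) inequality on the Bernoulli space together with a tensorization and central-limit argument, which is where all the real work lies. The only other subtlety is the infinite-dimensional setting: since each $P_j$ is a genuine polynomial and the constant $(p-1)^{k/2}$ obtained above is independent of the number of variables, it suffices to prove the estimate for polynomials in finitely many of the $g_n$ and then pass to the limit, using that $F\in L^2(\O)$ and that the conditional expectations onto the first finitely many coordinates converge to $F$ in $L^2(\O)$ by martingale convergence; the constant is unaffected by this limit.
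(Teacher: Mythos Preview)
Your proof is correct and follows exactly the route the paper indicates: the paper does not give a proof but simply states that the lemma ``is a direct corollary to the hypercontractivity of the Ornstein--Uhlenbeck semigroup due to Nelson,'' citing \cite{Simon} and \cite{Nelson2}. Your argument via the chaos decomposition $F=\sum_{m=0}^k F_m$, the action $T_\rho|_{\mathcal W_m}=\rho^m$, and Nelson's contractivity at $\rho=(p-1)^{-1/2}$ is precisely the standard derivation underlying that citation, so there is nothing to add.
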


This lemma is a direct corollary to the
  hypercontractivity of the Ornstein-Uhlenbeck
semigroup due to Nelson \cite{Nelson2}.
Note that in the definition of $P_j$ above, 
we may have $n_i = n_\l$ for $i \ne \l$.
Namely, we do not impose independence of the factors $\gf_{n_i}$ of $P_j$
in Lemma \ref{LEM:hyp}.
In the following, we apply Lemma \ref{LEM:hyp}
to multilinear terms involving 
$\{ g_n \}_{n \in \Z^2}$ and  $\{ h_n \}_{n \in \Z^2}$
 in \eqref{series}
by first expanding $g_n$ and $h_n$ into their 
real and imaginary parts.

We use Lemma \ref{LEM:hyp} to prove the following two lemmas.
The first lemma is a direct consequence of the linear Gaussian bound
and will be used in  Section~\ref{SEC:Q}.

\begin{lemma}\label{LEM:kin}
Let $s > 1$. Let $\alpha, \be$ be multi-indices such that $|\alpha|\leq s$ and $|\be|\leq s-1$. 
Then, for every $\delta>0$, there exists  $C>0$ such that 
\begin{align}
\big\|\|\partial^\alpha \P_{M}\pi_N u\|_{L^\infty(\T^2)}\big\|_{L^{p}(d\mu_{s}(u, v))} & \leq C\sqrt{p}M^\delta,
\label{kin0}\\
\big\|\|\partial^\be \P_{M}\pi_N v\|_{L^\infty(\T^2)}\big\|_{L^{p}(d\mu_{s}(u, v))}& \leq C\sqrt{p}M^\delta,
\label{kin0a}
\end{align}

\noi
for any $p \geq 2$ and any $N, M \in \N$.
\end{lemma}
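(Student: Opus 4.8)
The plan is to expand $\partial^\alpha \P_M \pi_N u$ in its random Fourier series and estimate its $L^\infty_x$ norm in $L^p(\Omega)$ by a standard Sobolev-embedding plus Wiener-chaos argument. First I would fix $q \geq \max(p, 2)$ large (to be chosen), and use the Sobolev embedding $W^{\frac{2}{q}+\delta', q}(\T^2) \hookrightarrow L^\infty(\T^2)$ together with Minkowski's integral inequality to bound
\begin{align*}
\big\|\|\partial^\alpha \P_M \pi_N u\|_{L^\infty_x}\big\|_{L^p(d\mu_s)}
&\leq \big\|\|\partial^\alpha \P_M \pi_N u\|_{W^{\frac{2}{q}+\delta', q}_x}\big\|_{L^q(d\mu_s)}\\
&\leq C\Big(\int_{\T^2}\big\| J^{\frac{2}{q}+\delta'}\partial^\alpha \P_M \pi_N u(x)\big\|_{L^q(d\mu_s)}^q\, dx\Big)^{1/q}.
\end{align*}
For each fixed $x$, the quantity $J^{\frac{2}{q}+\delta'}\partial^\alpha \P_M \pi_N u(x)$ is a (finite) linear combination of the Gaussians $g_n/\jb{n}^{s+1}$ — a polynomial of degree $1$ in $\bar g$ — so Lemma~\ref{LEM:hyp} with $k=1$ gives $\|\cdot\|_{L^q(d\mu_s)} \leq C\sqrt{q}\,\|\cdot\|_{L^2(d\mu_s)}$.

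Next I would compute the $L^2(d\mu_s)$ norm explicitly. Since the $g_n$ are orthonormal in $L^2$, for each $x$ we get
\[
\big\| J^{\frac{2}{q}+\delta'}\partial^\alpha \P_M \pi_N u(x)\big\|_{L^2(d\mu_s)}^2
= \sum_{\substack{N \leq \jb{n} < 2N,\ |n|\leq N}} \frac{|n^\alpha|^2 \jb{n}^{2(\frac{2}{q}+\delta')}}{\jb{n}^{2(s+1)}}
\lesssim M^{2|\alpha| - 2(s+1) + \frac{4}{q} + 2\delta' + 2},
\]
where I used that the sum has $O(M^2)$ terms each of size $\lesssim M^{2|\alpha|-2s-2+\frac{4}{q}+2\delta'}$, and that $\P_M$ localizes to $\jb{n}\sim M$. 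Since $|\alpha|\leq s$, the exponent is $\leq \frac{4}{q} + 2\delta' \leq 2\delta$ once $q$ is taken large enough and $\delta'$ small enough (and the bound is $x$-independent, so the outer $dx$-integration over $\T^2$, which has volume $1$, costs nothing). Combining, $\big\|\|\partial^\alpha \P_M \pi_N u\|_{L^\infty_x}\big\|_{L^p(d\mu_s)} \leq C\sqrt{q}\,M^\delta \leq C'\sqrt{p}\,M^\delta$ after absorbing the $q$-dependence (which is a fixed constant depending only on $\delta$) into $C$; note we still read off the genuine $\sqrt{p}$ because $\|F\|_{L^p} \leq \|F\|_{L^q}$ for $p \leq q$ and the relevant growth in the Gaussian integrability is $\sqrt{p}$. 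The estimate \eqref{kin0a} for $\partial^\beta \P_M \pi_N v$ is identical, now using the series \eqref{series} for $v^\o$ with denominator $\jb{n}^s$ and the constraint $|\beta|\leq s-1$, which gives exponent $2|\beta| - 2s + \frac{4}{q} + 2\delta' + 2 \leq \frac{4}{q}+2\delta' \leq 2\delta$.

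The argument is genuinely routine — it is the ``linear Gaussian bound'' alluded to in the statement — so there is no serious obstacle; the only point requiring a little care is the bookkeeping of the various small exponents ($\frac{2}{q}$ from Sobolev embedding, $\delta'$ as a safety margin) to confirm they can all be absorbed into the prescribed $M^\delta$ given the sharp constraints $|\alpha|\leq s$, $|\beta|\leq s-1$, and the (crucial) fact that the Fourier support of $\P_M$ gives only $\jb{n}\sim M$ rather than $\jb{n}\lesssim M$, so that one genuinely gains from the high-frequency decay $\jb{n}^{-2(s+1)}$ resp. $\jb{n}^{-2s}$ when $|\alpha|<s$ resp. $|\beta|<s$, and breaks even (modulo $M^\delta$) in the borderline cases $|\alpha|=s$, $|\beta|=s-1$. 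The independence of the bound from $N$ is automatic since $\pi_N$ only removes frequencies, shrinking the sum.
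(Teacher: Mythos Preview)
Your proof is correct and follows the same route as the paper: Sobolev embedding to pass from $L^\infty_x$ to an $L^q_x$-based norm, Minkowski to exchange the spatial and probabilistic integrals, the Wiener-chaos estimate (Lemma~\ref{LEM:hyp} with $k=1$), and an explicit $L^2(\Omega)$ computation using the random Fourier series \eqref{series}. Two cosmetic remarks: the summation range in your displayed $L^2$ computation should read $M\le\jb{n}<2M$ rather than $N\le\jb{n}<2N$, and your handling of the $\sqrt{q}$-versus-$\sqrt{p}$ bookkeeping is slightly tangled --- the paper sidesteps this by fixing $q=q(\delta)>2/\delta$ independently of $p$ (so the Sobolev constant is fixed once and for all), then assuming $p\ge q$ without loss of generality and applying hypercontractivity directly in $L^p$.
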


\begin{proof}
In the following, we only prove \eqref{kin0} since \eqref{kin0a}
follows in a similar manner.
Let $q\gg 1$ be such that $q>2/\delta$. 
Then, by the Sobolev embedding $W^{\delta,q}(\T^2)\subset L^\infty(\T^2)$, it suffices to prove the bound 
\begin{align*}
\big\| \| J^\delta \partial^\alpha \P_{M}\pi_N u\|_{L^q(\T^2)}\big\|_{L^{p}(d\mu_{s}(u, v))}\leq C\sqrt{p}M^\delta\, .
\end{align*}

\noi
Without loss of generality, assume $p \geq q$.
By  Minkowski's inequality, 
we see that  it suffices to prove 
\begin{align}
\big\| \| J^\delta \partial^\alpha \P_{M}\pi_N u\|_{L^{p}(d\mu_{s}(u, v))}\big\|_{L^q(\T^2)}\leq C\sqrt{p}M^\delta\, .
\label{kin1}
\end{align}

Noting that 
$$
 \| J^\delta \partial^\alpha \P_{M}\pi_N u\|_{L^{p}(d\mu_{s}(u, v))}
 =\bigg\| \sum_{ \substack { M\leq \jb{n} < 2M \\ |n|\leq N} }
\frac{(in)^\alpha \jb{n}^\delta g_n}{\jb{n}^{s+1}}\, e^{in\cdot x}
\bigg\|_{L^p(\Omega)}, 
$$

\noi
it follows from Lemma \ref{LEM:hyp} 
that 
\begin{align*}
\bigg\| \sum_{ \substack { M\leq \jb{n} < 2M \\ |n|\leq N }   }
\frac{(in)^\alpha \jb{n}^\delta g_n}{\jb{n}^{s+1}}\, e^{in\cdot x}\bigg\|_{L^p(\Omega)}
& \leq 
\sqrt{p}\,
\bigg\| \sum_{ \substack { M\leq \jb{n} < 2M \\ |n|\leq N }   }
\frac{(in)^\alpha \jb{n}^\delta g_n}{\jb{n}^{s+1}}\, e^{in\cdot x}
\bigg\|_{L^2(\Omega)}\notag\\
& =  \sqrt{p}\,\bigg(\sum_{ \substack { M\leq \jb{n} < 2M \\ |n|\leq N }   }
\frac{|n|^{2|\alpha|}
 \jb{n}^{2\delta}
}{\jb{n}^{2(s+1)}}\bigg)^\frac{1}{2}
\leq C \sqrt{p}M^{\delta}, 
\end{align*}

\noi
yielding \eqref{kin1}.
This completes the proof of Lemma~\ref{LEM:kin}. 
\end{proof}


Set 
\begin{align}
F_{N}(u)\equiv R_{s, N}(\pi_{N}u).
\label{K2}
\end{align}

\noi
The following lemma on the convergence property of $F_N(u)$ is  inspired by the consideration in \cite{BO96}. 
Similar  analysis also appears  in the quantum field theory literature.

\begin{lemma}\label{LEM:NM}
Let $s > 0$.  Then, there exist  $\ta>0$  and $C>0$ such that 
\begin{align*}
\|F_N(u)-F_M(u)\|_{L^p(d\mu_s(u,v))} \leq Cp^2 M^{-\ta}
\end{align*}

\noi
for any $N\geq M\geq 1$ and any $p\geq 2$.
\end{lemma}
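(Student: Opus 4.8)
The plan is to estimate the $L^p$-norm of $F_N(u) - F_M(u)$ by first reducing to the $L^2$-norm via hypercontractivity (Lemma~\ref{LEM:hyp}), and then computing the $L^2$-norm explicitly by expanding in Fourier/Gaussian series and carefully tracking the cancellation built into the renormalization $-\frac32\s_N\int u^2$. Concretely, recall that
\[
F_N(u) = R_{s,N}(\pi_N u) = \frac 32 \int_{\T^2} (J^s \pi_N u)^2 (\pi_N u)^2 - \frac 32 \s_N \int_{\T^2} (\pi_N u)^2.
\]
Expanding $\pi_N u$ as a random Fourier series with coefficients $g_n/\jb{n}^{s+1}$, the quantity $F_N(u)$ becomes a sum of monomials of degree $4$ in the Gaussians $\{g_n\}$. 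The key point is that the diagonal contributions (where the frequencies pair up so that the $(J^s \pi_N u)^2$ factor produces a factor $\jb{n}^{2s}$ against the $\jb{n}^{-2(s+1)}$, i.e.\ a $\jb{n}^{-2}$, summing to $\s_N$) are exactly cancelled by the counterterm $-\frac32\s_N\int(\pi_N u)^2$. After this cancellation, $F_N(u)$ is a genuine element of the homogeneous Wiener chaos of order $\le 4$ (its expectation vanishes), with no divergent piece.

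First I would write $F_N(u) - F_M(u)$ as a sum over the frequency region where at least one of the four frequency variables has modulus in $(M, N]$ (this is the region where the truncations $\pi_N$ and $\pi_M$ differ), again after subtracting the matching counterterm difference $-\frac32(\s_N - \s_M)\int(\pi_M u)^2$ and being careful to also account for $\pi_N u$ vs.\ $\pi_M u$ in the quadratic counterterm. By Lemma~\ref{LEM:hyp}, since this is a polynomial of degree $\le 4$ in the $g_n$'s,
\[
\|F_N(u) - F_M(u)\|_{L^p(d\mu_s)} \le (p-1)^2 \, \|F_N(u) - F_M(u)\|_{L^2(d\mu_s)},
\]
which already produces the claimed $p^2$ dependence. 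It then remains to prove $\|F_N(u) - F_M(u)\|_{L^2(d\mu_s)} \lesssim M^{-\ta}$ for some $\ta > 0$.

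For the $L^2$ bound I would use orthogonality of distinct Wiener chaos monomials, so that $\|F_N(u) - F_M(u)\|_{L^2}^2$ reduces to a sum of products of the deterministic coefficients over the resonant frequency configurations. After the diagonal cancellation, the surviving sums have the schematic form
\[
\sum \frac{\jb{n_1}^{2s} \text{ or } \jb{n_1}^s\jb{n_2}^s}{\jb{n_1}^{2(s+1)}\jb{n_2}^{2(s+1)}\jb{n_3}^{2(s+1)}\jb{n_4}^{2(s+1)}}
\]
subject to $n_1 + n_2 + n_3 + n_4 = 0$ (from the spatial integration) and subject to at least one frequency exceeding $M$. Because $s > 0$, each such sum converges, and restricting one frequency to be $\gtrsim M$ gains a factor $M^{-\ta}$ with $\ta = \ta(s) > 0$ (e.g.\ $\ta$ can be taken to be a small positive number less than $\min(2s, 2)$, at the cost of spending a bit of the summability). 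The off-diagonal (non-resonant) pieces are even better behaved since there the numerator factors $\jb{n_i}^s$ are dominated by denominators without any coincidence of indices.

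\textbf{Main obstacle.} The delicate point is bookkeeping the renormalization cancellation: one must verify that the counterterm $\frac32\s_N\int(\pi_N u)^2$ precisely removes the would-be logarithmically divergent diagonal term in $\frac32\int(J^s\pi_N u)^2(\pi_N u)^2$, \emph{and} that the difference $F_N - F_M$ inherits a clean cancellation so that no $\s_N - \s_M \sim \log(N/M)$ growth survives uncancelled. This requires splitting the degree-$4$ expansion into its chaos components (orders $0$, $2$, $4$) and checking that the order-$0$ part is identically zero and the order-$2$ part (a quadratic expression whose coefficients involve sums like $\sum_{|k|\le N} \jb{k}^{-2}$ localized to the shell $M < |k| \le N$) is genuinely $O(\log(N/M))$ in size but is multiplied by coefficients that are $O(\jb{n}^{-2})$-summable \emph{and} localized, again gaining $M^{-\ta}$. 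Once the combinatorics of the Wick-type cancellation is set up correctly, the remaining frequency-sum estimates are routine. This is exactly the place where the computation departs from the purely linear Gaussian considerations of \cite{Tzvet, OTz}, as the authors emphasize.
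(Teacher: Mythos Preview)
Your approach is essentially the same as the paper's: apply the Wiener chaos estimate (Lemma~\ref{LEM:hyp}) to reduce $L^p$ to $L^2$ at the cost of $p^2$, then control the $L^2$-norm by expanding in the Gaussians $\{g_n\}$ and exploiting the cancellation from the counterterm $-\tfrac32\s_N\int(\pi_N u)^2$. The paper organizes the computation slightly differently: rather than a Wick/chaos-order decomposition into components of order $0,2,4$, it splits the quartic Fourier sum according to frequency pairings (the sets $\Ld_1,\Ld_2,\Ld_3$ corresponding to $n_1=-n_2$, $n_1=-n_3$ or $n_1=-n_4$, and ``no pair''), and for the product pieces $\wt J_{1,N}$ and $J_{2,N}$ it applies H\"older before hypercontractivity rather than hypercontractivity globally. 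Both routes yield the same $p^2 M^{-\ta}$ bound.

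One inaccuracy to flag: the order-$0$ (expectation) part of $F_N$ does \emph{not} vanish. A direct Wick computation gives
\[
\E_{\mu_s}[F_N(u)] \;=\; 3\bigg(\sum_{|n|\le N}\frac{1}{\jb{n}^{s+2}}\bigg)^2,
\]
which is bounded (since $s>0$) but nonzero; the counterterm only kills the divergent pairing $n_1=-n_2$, not the cross-pairing $n_1=-n_3$. This does not damage your argument, because what you actually need is $\E[F_N-F_M]=O(M^{-\ta})$, and indeed the difference of the above expressions is $O(M^{-s})$. Your schematic formula for the surviving $L^2$-sum is also a bit garbled (the exponents on numerator and denominator are not consistent), but the intended estimate---essentially the paper's bound on \eqref{McD3_bis}---is correct.
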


\begin{remark}\label{REM:hyp}\rm
As a corollary to Lemma \ref{LEM:NM}, we have the following tail estimate: 
\begin{align*}
\mu_s\big( (u, v) : |F_N(u)-F_M(u)| > \al \big) \leq C e^{-c  M^\frac{\theta}{2}\al^\frac{1}{2}},
\end{align*}

\noi
which follows from Lemma \ref{LEM:NM} and Chebyshev's inequality.
See also 
 \cite[Lemma~4.5]{Tz_ptrf}.
 
\end{remark}

\begin{proof}
Write
\begin{align}
\frac{3}{2} \int_{\T^2} (J^s \pi_N u)^2 (\pi_N u)^2 
=
\frac{3}{2}
\sum_{\G_N}
\jb{n_1}^{s}\jb{ n_2}^{s}
\,
\ft{ u}_{n_1}
\ft{ u}_{n_2}
\ft{ u}_{n_3}\ft{u}_{n_4}.
\label{hyp1}
\end{align}

\noi
where
$\G_N$ is defined by 
\[ \G_N = \big\{ (n_1, n_2, n_3, n_4) \in \Z^8:
n_1 + n_2+n_3+n_4=0, |n_j|\leq N\big\}.\]

\noi
We say that we have a {\it pair} if we have $n_j = -n_k$, $j \ne k$ in the summation above.
Under the condition $n_1 + n_2+n_3+n_4=0$, we have either two pairs or no pair.
We now split the summation in three cases.
(i) The first contribution comes from the case 
\[\Ld_1 = \G_N \cap \{ n_1=-n_2\},\]

\noi 
(ii) the second contribution comes from  
\[\Ld_2 = \G_N \cap \{n_1=-n_3\text{ or }n_1=-n_4 \text{ but }n_1 \ne -n_2\},\]

\noi
and (iii)  the third contribution comes from the ``no pair'' case:
\[\Ld_3 = \G_N \cap \{n_1\ne -n_j, j = 2, 3, 4\}.\]

\noi
Therefore, recalling that $\ft u_{-n} = \cj{\ft u_n}$, we have the decomposition
$$
\frac{3}{2} \int_{\T^2} (J^s \pi_N u)^2 (\pi_N u)^2 
=
J_{1,N}(u)+J_{2,N}(u)+J_{3,N}(u),
$$
 
\noi
where 
$J_{j,N}(u)$, $j = 1, 2, 3$, 
is the contribution to \eqref{hyp1} from $\Ld_j$, 
satisfying
\begin{align}
J_{1,N}(u)& =\frac{3}{2}\Big(
\sum_{|n|\leq N}\jb{n}^{2s}|\ft{ u}_{n}|^2
\Big)
\Big(\sum_{|n|\leq N}|\ft{ u}_{n}|^2\Big), \notag \\
J_{2,N}(u)
& =3\sum_{|n|\leq N}
\jb{n}^{s}|\ft{ u}_{n}|^2
\Big(\sum_{\substack{|m|\leq N\\m \ne n }}
\jb{m}^{s}|\ft{ u}_{m}|^2\Big)
- \frac32 \sum_{\substack{|n|\leq N\\n \ne 0}}
\jb{n}^{2s}|\ft{ u}_{n}|^4, \label{hyp1X}\\
J_{3,N}(u)
& =\frac{3}{2}\sum_{\Ld_3}
\jb{n_1}^{s}\jb{n_2}^{s} \ft{ u}_{n_1}\ft{ u}_{n_2}\ft{ u}_{n_3}\ft{u}_{n_4}.\notag 
\end{align}

\noi
Note that 
the first term in \eqref{hyp1X}
corresponds to the contribution
from 
\[ \{n_1=-n_j \text{ but }n_1 \ne -n_2\},\quad j = 3, 4.\]

\noi
We, however, needed to subtract the 
contribution from 
\[ \{n_1=-n_3 = -n_4\text{ but }n_1 \ne -n_2\}, \]

\noi
which was counted twice.
This corresponds to the second term in \eqref{hyp1X}.
Note that we need the restriction $n \ne 0$
since $n_1 \ne - n_2$.

Now, 
by setting 
$$
\wt J_{1,N}(u)=
J_{1,N}(u)- 
\frac 32 \s_N  \int_{\T^2} (\pi_N u)^2
= 
\frac{3}{2}
\bigg(\Big(\sum_{|n|\leq N}
\jb{n}^{2s}|\ft{ u}_{n}|^2\Big) -\s_N\bigg)
\Big(\sum_{|n|\leq N}|\ft{ u}_{n}|^2\Big),
$$

\noi
it suffices to prove the following three estimates:
\begin{align}
\label{McD1}
\big\| \wt J _{1,N}(u^\o)-\wt J_{1,M}(u^\omega)\big\|_{L^p(\Omega)}\les p^2M^{-\ta},\\
\label{McD2}
\big\|  J _{2,N}(u^\o)- J_{2,M}(u^\omega)\big\|_{L^p(\Omega)}\les p^2M^{-\ta},\\
\label{McD3}
\big\|  J _{3,N}(u^\o)- J_{3,M}(u^\omega)\big\|_{L^p(\Omega)}\les p^2M^{-\ta},
\end{align}

\noi
where 
$u^\omega$ is as in  \eqref{series}.

With  the definition \eqref{sigma} of $\s_N$, 
the  left-hand side of \eqref{McD1} equals
\begin{align}
\frac{3}{2}
\bigg\|
\bigg(\sum_{|n|\leq N}
\frac{|g_n|^2-1}{\jb{n}^2}
\bigg)\bigg(\sum_{|n|\leq N}
& \frac{|g_n|^2}{\jb{n}^{2(s+1)}}
\bigg)   \notag\\
& -\bigg(\sum_{|n|\leq M}
\frac{|g_n|^2-1}{\jb{n}^2}
\bigg)
\bigg(
\sum_{|n|\leq M}
\frac{|g_n|^2}{\jb{n}^{2(s+1)}}
\bigg)
\bigg\|_{L^p(\O)}
\label{kan1}
\end{align}

\noi
Then, with
\begin{align}
A_N B_N-A_M B_M=(A_N-A_M)B_N +A_M(B_N-B_M), 
\label{hyp1a}
\end{align}

\noi
we can estimate \eqref{kan1} by
\begin{align*}
C\bigg\|
\bigg(
\sum_{M<|n|\leq N}
& \frac{|g_n|^2-1}{\jb{n}^2}
\bigg)
 \bigg(\sum_{|n|\leq N} \frac{|g_n|^2}{\jb{n}^{2(s+1)}}
\bigg)\bigg\|_{L^p(\O)}
\\
& +
C\bigg\|
\bigg(
\sum_{|n|\leq N}
\frac{|g_n|^2-1}{\jb{n}^2}
\bigg)
\bigg(
\sum_{M<|n|\leq N}
\frac{|g_n|^2}{\jb{n}^{2(s+1)}}
\bigg)
\bigg\|_{L^p(\Omega)}
=: \I + \II.
\end{align*}

We now estimate $\I$ and $\II$. 
By H\"older's inequality, Lemma \ref{LEM:hyp}, 
and the triangle inequality, we have 
\begin{align}
\I
& \les \bigg\|\sum_{M<|n|\leq N}
\frac{|g_n|^2-1}{\jb{n}^2}\bigg\|_{L^{2p}(\O)}
\bigg\|
\sum_{|n|\leq N}
\frac{|g_n|^2}{\jb{n}^{2(s+1)}}
\bigg\|_{L^{2p}(\O)}\notag \\
& \les
p \, \bigg\|\sum_{M<|n|\leq N}
\frac{|g_n|^2-1}{\jb{n}^2}\bigg\|_{L^{2}(\Omega)}
\sum_{|n|\leq N}
\frac{\|g_n\|_{L^{4p}(\O)}^2}{\jb{n}^{2(s+1)}}.
\label{hyp1b}
\end{align}

\noi
Noting that $\|g_n\|_{L^{4p}(\O)}\les \sqrt{p}$ and 
\begin{align}
\label{hyp1c}
\E\big[(|g_n|^2-1)(|g_m|^2-1)\big] = 0 
\end{align}

\noi
unless $n = \pm m$, 
we obtain 
\begin{align}
\I\les p^2\bigg(\sum_{M<|n|\leq N}\frac{1}{\jb{n}^4}\bigg)^\frac{1}{2}
\bigg(\sum_{|n|\leq N}
\frac{1}{\jb{n}^{2(s+1)}}
\bigg)
\les p^2M^{-1}.
\label{hyp2}
\end{align}

Next, we estimate $\II$.
Proceeding as above, we obtain 
\begin{align}
\II\les p^2\bigg(\sum_{|n|\leq N}\frac{1}{\jb{n}^4}\bigg)^\frac{1}{2}
\bigg(\sum_{M<|n|\leq N}
\frac{1}{\jb{n}^{2(s+1)}}
\bigg)
\les p^2M^{-2s}.
\label{hyp3}
\end{align}
\noi

\noi
Hence, \eqref{McD1} follows from \eqref{hyp2} and \eqref{hyp3}
provided that $\theta \leq \min(1, 2s)$.

Let us next turn to the proof of \eqref{McD2}. 
By  the triangle inequality, $\|g_n\|_{L^{4p}(\O)}\les \sqrt{p}$, and 
\eqref{hyp1a}, 
we have
\begin{align*}
\text{LHS of } \eqref{McD2}
& \les p^2\Bigg\{ \bigg(\sum_{M<|n|\leq N}\frac{1}{\jb{n}^{s+2}}\bigg)
\bigg(\sum_{|n|\leq N}\frac{1}{\jb{n}^{s+2}}\bigg)
+ \sum_{M<|n|\leq N}\frac{1}{\jb{n}^{2s+4}}\Bigg\}\\
& \leq Cp^2M^{-s},
\end{align*}

\noi
provided that $s > 0$.
This proves \eqref{McD2}.

Let us finally  turn to \eqref{McD3}. 
In this case, it suffices to prove
\begin{equation}\label{McD3_bis}
\bigg\|\sum_{\substack{\Ld_3 \\
\max|n_j|>M}}
\frac{g_{n_1}g_{n_2}g_{n_3}g_{n_4}}
{\jb{n_1}\jb{n_2}\jb{n_3}^{s+1}\jb{n_4}^{s+1}}\bigg\|_{L^p(\Omega)}
\les p^2M^{-\ta}.
\end{equation}

\noi
By Lemma \ref{LEM:hyp}, 
the left-hand side of \eqref{McD3_bis} is bounded by
\begin{align}
p^2
& \bigg\|\sum_{\substack{\Ld_3 \\
\max|n_j|>M}}
\frac{g_{n_1}g_{n_2}g_{n_3}g_{n_4}}
{\jb{n_1}\jb{n_2}\jb{n_3}^{s+1}\jb{n_4}^{s+1}}\bigg\|_{L^2(\Omega)}\notag\\
& = p^2\,  \E\Bigg[\bigg(\sum_{\substack{\Ld_3 \\
\max|n_j|>M}}
\frac{g_{n_1}g_{n_2}g_{n_3}g_{n_4}}
{\jb{n_1}\jb{n_2}\jb{n_3}^{s+1}\jb{n_4}^{s+1}}\bigg)\notag\\
& 
\hphantom{XXXXXXX}
\times \bigg(\sum_{\substack{\Ld_3 \\
\max|m_j|>M}}
\frac{\cj{g_{m_1}g_{m_2}g_{m_3}g_{m_4}}}
{\jb{m_1}\jb{m_2}\jb{m_3}^{s+1}\jb{m_4}^{s+1}}\bigg)\Bigg]^\frac{1}{2}.
\label{hyp4}
\end{align}

\noi
Recalling that
\[ \E\big[ g_n^k \cj{g_m}^\l\big] = \dl_{nm} \dl_{k\l}\cdot  k! \]

\noi
(for $n, m \ne 0$),\footnote{Recall that  $g_0$ is real-valued
and thus we have $\E[g_0^{2k}] = \frac{(2k)!}{2^k\cdot k!}$.}
we see that the non-zero  contribution to \eqref{hyp4} comes from 
$m_j = n_{\s(j)}$, $j = 1, \dots, 4$, 
for some permutation $\s \in S_4$.
Hence,  we have 
\begin{align}
\eqref{hyp4}
& \les p^2 \Bigg[\sum_{\substack{\G_N \\ \max|n_j|>M}}
\bigg(
& \frac{1}
{\jb{n_1}^2 \jb{n_2}^2 \langle n_3\rangle^{2(s+1)}\langle n_4\rangle^{2(s+1)}}
 +
\frac{1}{\prod_{j = 1}^4 \jb{n_j}^{s+2}}
\bigg)\Bigg]^\frac{1}{2} \notag\\
& \les p^2 M^{-1}
\label{hyp4a}
\end{align}

\noi
for $ s\geq 0$.
Here, the second inequality in \eqref{hyp4a} follows from the following estimate:
\begin{align*}
\sum_{\substack{\G_N \\ |n_1|>M}}
 \frac{1}
{\jb{n_1}^2 \jb{n_2}^2 \jb{n_3}^{2+\eps}\jb{n_4}^{2+\eps}}
& = \sum_{\substack{n_1, n_2, n_3 \in \Z^2\\|n_1| > M}}
 \frac{1}
{\jb{n_1}^2 \jb{n_2}^2 \jb{n_3}^{2+\eps}\jb{n_1+n_2 + n_3}^{2+\eps}}\\
& \les \sum_{\substack{n_1, n_2\in \Z^2\\|n_1| > M}}
 \frac{1}
{\jb{n_1}^2 \jb{n_2}^2 \jb{n_1+n_2}^{2+\eps} }\\
& \les \sum_{\substack{n_1\in \Z^2\\|n_1| > M}}
 \frac{1}
{\jb{n_1}^{4}}
\les M^{-2 }
\end{align*}

\noi
for any $\eps > 0$.
This proves~\eqref{McD3_bis} and hence~\eqref{McD3}.
This completes the proof of Lemma~\ref{LEM:NM}.
\end{proof}

Finally, we conclude this section by presenting the proof of Proposition~\ref{PROP:QFT}.

\begin{proof}[Proof of Proposition \ref{PROP:QFT}]
First, note that \eqref{parvo} follows from Lemma \ref{LEM:NM}.
Next, let us  show how Lemma~\ref{LEM:NM} implies \eqref{vtoro}.
It suffices to  show 
\begin{equation}
\label{integral}
\int_{1}^\infty 
\mu_{s,N, r}\big((u,v)\,:\, -F_{N}(u)>\log\lambda\big) \lambda^{p-1} d\lambda\leq C
\end{equation}

\noi
for some finite  $C>0$ {\it independent} of  the truncation parameter $N$. 
Here, $\mu_{s, N, r}$ is the Gaussian measure $\mu_s$ with 
a cutoff on the truncated energy $E_N(u, v)$
defined in \eqref{gauss4}.
While $F_N(u) = R_{s, N}(\pi_N u)$ is not sign-definite, 
the defocusing nature of the equation plays an important role.
In fact, from \eqref{K1} and  \eqref{K2} with \eqref{sigma}, we have
the following logarithmic bound:
\begin{align}
-F_N(u)\leq \frac 32 \s_N  \int_{\T^2} u^2
\leq C_r \log N, 
\label{hyp5}
\end{align}

\noi
in the support of $\mu_{s,N,  r}$.
In view of this logarithmic upper bound on $-F_N(u)$, 
we  apply Nelson's estimate \cite{Nelson2} to prove \eqref{integral}.
See \cite{DPT1, OTh} for analogous arguments in the context of 
the $\Phi_2^{2k}$-theory.

We need to estimate the measure 
\begin{align}
\mu_{s,N, r}\big((u,v): -F_{N}(u)>\log\lambda\big)
\label{hyp6}
\end{align}

\noi
for each given $\ld \geq 1$.
Choose $N_0 \in \R$ such that 
\begin{align*}
\log \lambda = 2 C_r  \log N_0. 
\end{align*}

\noi
Then, 
it follows from \eqref{hyp5} that 
the contribution to \eqref{hyp6} is 0 when $N < N_0$.
On the other hand, when $N \geq  N_0$, 
from \eqref{hyp5} and Lemma \ref{LEM:NM} (see Remark \ref{REM:hyp}), we have
\begin{align*}
\mu_{s,N, r}\big((u,v): -F_{N}(u)>\log\ld\big)
& \leq \mu_{s,N, r}\big((u,v): -F_{N}(u) + F_{N_0}(u) >\tfrac 12 \log\ld\big)\notag\\
&  \leq Ce^{-c (\log \ld)^\frac{1}{2}  N_0^{\frac{\theta}{2}}}
= Ce^{-c (\log \ld)^\frac{1}{2}  \ld^{\frac{\theta}{4C_r}}}.
\end{align*}

\noi
This exponential decay ensures the bound \eqref{integral} which in turn implies \eqref{vtoro}.

Finally, the uniform bound  \eqref{vtoro} implies \eqref{treto} by a standard argument (see \cite[Remark~3.8]{TZ2}). 
More precisely, the $L^p$-convergence  \eqref{treto} follows from the uniform $L^p$-bound \eqref{vtoro}
and the softer convergence in measure (as a consequence of \eqref{parvo}). 
This completes the proof of Proposition~\ref{PROP:QFT}.
\end{proof}

\begin{remark}\label{REM:3d}\rm
Let us briefly discuss the three-dimensional case.
By repeating the computation presented above, 
it is easy to check that Lemma \ref{LEM:NM} still holds
with $\theta = \min\big(\frac{1}{2}, s - 1\big)$, provided that $s > 1$.
The main issue in proving Proposition \ref{PROP:QFT} appears in \eqref{hyp5}.
In the three-dimensional case, 
we only have
\begin{align*}
-F_N(u)\leq C_r  N, 
\end{align*}

\noi
instead of the logarithmic bound \eqref{hyp5}.
If we were to repeat the argument above, this would force us to set
 $N_0 \in \R$ such that 
\begin{align*}
\log \lambda = 2 C_r   N_0, 
\end{align*}

\noi
leading to 
\begin{align}
\mu_{s,N, r}\big((u,v): -F_{N}(u)>\log\ld\big)
&  \leq Ce^{-c (\log \ld)^\frac{1}{2}  N_0^{\frac{\theta}{2}}}
= Ce^{-c' (\log \ld)^{\frac{1}{2}+ \frac{\theta}{2}}}.
\label{3d2}
\end{align}

\noi
Noting that $\theta = \frac 12$ when $s \geq \frac 32$, 
we see that \eqref{3d2} is not sufficient to guarantee \eqref{integral}. 
As in the construction of the $\Phi^4_3$-measure, 
one may need to introduce a further renormalization in the three-dimensional case.

Another modification appears in Lemma \ref{LEM:kin}.
In the three-dimensional case, 
the estimates~\eqref{kin0} and~\eqref{kin0a} hold
with $M^{\frac{1}{2}+\dl}$ (instead of $M^\dl$).
This loss 
makes the proof of Theorem \ref{THM:2}
presented in the next section
break down in the three-dimensional case.
For example,  in \eqref{Q2} below, 
we would have 
$ p N_{4}^{-1} N_1^{\frac 12 + \dl} N_2^{\frac 12 +\dl}$
(instead of $ p N_{4}^{-1} N_1^\dl N_2^\dl$),
which makes the computations in Case (ii) of Subsection \ref{SUBSEC:Q}
simply false in the three-dimensional case.
See Remark \ref{REM:3d2}.
\end{remark}

\section{Renormalized energy estimate}
\label{SEC:Q}

In this section, we establish the probabilistic energy estimate
on the renormalized energy (Theorem~\ref{THM:2}).
As in Subsection \ref{SUBSEC:1.4}, 
let $u_N = \pi_N u$ and $v_N = \pi_N v$.
Then, from \eqref{H8}, we have
$$
\dt E_{s, N}(\pi_N\Phi_N(t)(u,v))\vert_{t=0}=Q_1(u,v)+Q_2(u,v)+Q_3(u,v),
$$

\noi
where 
\begin{align}
Q_1(u,v)& = 3 \int_{\T^2}  \P_{\ne 0}[ (J^s u_N)^2  ] \, \P_{\ne 0} [v_N  u_N],
\label{Q0} \\
Q_2(u,v)& = 3 \bigg( \int_{\T^2} (J^s u_N)^2 - \s_N\bigg)\int_{\T^2}  v_Nu_N,  \notag \\
Q_3(u,v)& =
 \sum_{\substack{ |\al|+|\be|+|\g|\leq s\\
 |\al|,|\be|,|\g|<s}}
c_{\alpha,\beta,\gamma}
\int_{\T^2}
J^s  v_N\cdot\dd^\alpha u_N \cdot \partial^\beta u_N \cdot \partial^\gamma u_N,\notag
\end{align}

\noi
In the following, we prove
\begin{align}
\| Q_j(u,v)\|_{L^p(d\mu_{s, N, r})}\les p
\label{Q1}
\end{align}

\noi
for $j = 1, 2, 3$.

\subsection{Estimate on $Q_2(u,v)$}
By Cauchy-Schwarz and Cauchy's inequalities, we have
$$
\bigg|
\int_{\T^2}  v_N u_N
\bigg|
\leq \| u_N\|_{L^2}\|v_N\|_{L^2}
\leq E_N(u,v).
$$

\noi
Then, proceeding as in \eqref{hyp1b} with Lemma \ref{LEM:hyp}
and \eqref{hyp1c}, we have
\begin{align*}
\| Q_2(u,v)\|_{L^p(d\mu_{s, N, r})}
& \leq C_r\bigg\| \int_{\T^2} (J^s \pi_N u)^2 - \s_N\bigg\|_{L^p(d\mu_{s})}\\
& \sim \bigg\|\sum_{\substack{n \in \Z^2\\|n|\leq N}} \frac{|g_n(\omega)|^2 - 1}{\jb{n}^2} \bigg\|_{L^p(\O)}
 \leq p \, \bigg\|\sum_{\substack{n \in \Z^2\\|n|\leq N}} \frac{|g_n(\omega)|^2 - 1}{\jb{n}^2} \bigg\|_{L^2(\O)}\\
& \les p.
\end{align*}

\noi
This proves \eqref{Q1} in this case.

\subsection{Estimate on $Q_1(u,v)$}\label{SUBSEC:Q}
By applying the  Littlewood-Paley decomposition, we have
$$
Q_1(u,v)= \sum_{\substack{ N_1,N_2,N_3,N_4\geq 1 \\\text{ dyadic}}}
Q_1^{\bf N}(u,v),
$$
where 
${\bf N}:=( N_1,N_2,N_3,N_4)$ and 
\begin{align}
Q_1^{\bf N}(u,v)=
3 \int_{\T^2}  \P_{\ne 0}[ J^s \P_{N_1}u_N \cdot J^s \P_{N_2}u_N  ] \, \P_{\ne 0} [\P_{N_3} v_N \cdot \P_{N_4} u_N ].
\label{Q1a}
\end{align}

\noi
We consider several cases according to the sizes of $N_1$, $N_2$, $N_3$, $N_4$.

\medskip

\noi
{\bf Case (i):} $N_4 \ges \max( N_1, N_2)^\frac{1}{100}$.
\\
\indent
Since $\P_{\ne 0}$ is clearly bounded on $L^p(\T^2)$, $1\leq p\leq \infty$, we have 
\begin{align*}
|Q_1^{{\bf N}}( u, v)|
& \les 
\big\|\P_{\ne 0}[ J^s \P_{N_1} u_N\cdot  J^s \P_{N_2} u_N ]\big\|_{L^{\infty}_x}
\big\| \P_{\ne 0} [\P_{N_3} v_N \cdot \P_{N_4} u_N ]\big\|_{L^1_x}\\
& \leq \|J^s \P_{N_1} u_N\|_{L^\infty_x}
\|J^s \P_{N_2}u_N\|_{L^\infty_x}
\| \P_{N_3} v_N\|_{L^2_x}
\| \P_{N_4} u_N\|_{L^2_x}.
\end{align*}

\noi
Noting that 
$$
\| \P_{N_3} v_N\|_{L^2_x}
\| \P_{N_4} u_N\|_{L^2_x}\les N_{4}^{-1} E_N(u,v),
$$

\noi
we have
$$
\| Q_1^{{\bf N}}(u,v)\|_{L^p(d\mu_{s, N, r})}
\leq C_r N_{4}^{-1}
\Big\|
\|J^s \P_{N_1} u_N\|_{L^\infty_x}
\|J^s \P_{N_2}u_N\|_{L^\infty_x}
\Big\|_{L^p(d\mu_{s})}.
$$

\noi
Thanks to Lemma~\ref{LEM:kin}, we have 
$$
\big\|\|J^s \P_{N_j} u_N\|_{L^\infty_x}\big\|_{L^{2p}(\mu_{s})}\leq C_\dl\,  \sqrt p N_{j}^\delta
$$

\noi
for any  $\delta>0$, $j = 1, 2$.
Hence,  for any $\delta>0$, we have
\begin{align}
\| Q_1^{{\bf N}}(u,v)\|_{L^p(d\mu_{s, N, r})}
& \leq C_r N_{4}^{-1}
\Big\|
\|J^s \P_{N_1} u_N\|_{L^\infty_x}
\|J^s \P_{N_2}u_N\|_{L^\infty_x}
\Big\|_{L^p(d\mu_{s})}\notag \\
& \les  p N_{4}^{-1} N_1^\dl N_2^\dl.
\label{Q2}
\end{align}

\noi
By noting that $Q_1^{\bf N}(u, v)$ is not trivial only if 
$$
N_{3}\lesssim N_1+N_2+N_4, 
$$

\noi
 we can readily sum \eqref{Q2} over the dyadic blocks $N_j$, $j = 1, \dots, 4$.
This yields \eqref{Q1} in this case.

 \begin{remark}\label{REM:3d2}\rm
Thanks to Case (i), we can restrict the range of $N_4$ in the following.
This restriction: $N_4 \ll \max( N_1, N_2)^\frac{1}{100}$ plays a crucial role in Case (ii) presented below.
In the three-dimensional case,  due to the weaker conclusion of Lemma \ref{LEM:kin}
mentioned in Remark \ref{REM:3d}, 
we would have 
$ p N_{4}^{-1} N_1^{\frac 12 + \dl} N_2^{\frac 12 +\dl}$
on the right-hand side of \eqref{Q2}.
In particular, the argument above allows us to  conclude \eqref{Q1} 
under a much stronger  condition: 
 $N_{4}\ges N_1^{\frac 12 + 2\dl} N_2^{\frac 12 +2\dl}$,
 preventing us to handle the remaining case: 
 $N_{4}\ll N_1^{\frac 12 + 2\dl} N_2^{\frac 12 +2\dl}$
 in the three-dimensional setting.

 \end{remark}

\noi
{\bf Case (ii):} $N_4 \ll \max( N_1, N_2)^\frac{1}{100}$.
\\
\indent
In this case, we have $\max( N_1, N_2) \sim \max \{N_j, j = 1, \dots, 4\}$.
Without loss of generality, assume $N_2 \leq N_1 \sim \max \{N_j, j = 1, \dots, 4\}$.
Let  $a= a(s) > 0$ be sufficiently small (to be chosen later).
We consider the following two cases:
\[
 \text{(ii.a): }\ N_3 \ll N_1^{1-a}
\qquad \text{and} \qquad 
\text{(ii.b):}\ N_3 \ges N_1^{1-a}.
\]

\noi
$\bullet$ {\bf Subcase (ii.a):} $N_3 \ll N_1^{1-a}$.
\\
\indent
In this case, we have  $N_1 \sim N_2$. 
By H\"older's inequality, we have 
\begin{align}
\|  \ft u_{n} \|_{\l^{1+}_{n}}\les  E_N(u,v)^\frac{1}{2}.
\label{Q3}
\end{align}

\noi
Then, given $p \geq 2$, it follows from Young's inequality, \eqref{Q3}, 
and Minkowski's inequality that
\begin{align*}
\big\|     Q_1^{{\bf N}} &  ( u,v )\big\|_{L^p(d\mu_{s, N, r})}
 \\
& \les \Bigg\|
\bigg\|  
\sum_{\substack{n = n_1 + n_2
\\|n_j|\sim N_j, |n_j|\leq N,  \, j=1,2
\\
1\leq |n_1 + n_2| \ll N_1^{1-a }}
}
\jb{n_1}^{s}\ft{ u}_{n_1}
\jb{n_2}^{s}\ft{ u}_{n_2}
\bigg\|_{\l^{2-}_n}
 \underbrace{\| \ft {v}_{n_3} \|_{\l^2_{n_3}} \| \ft u_{n_4} \|_{\l^{1+}_{n_4}} }_{\les E_N(u,v)}
\Bigg\|_{L^p(d\mu_{s, N, r})}
\\[2mm]
&  \leq C_{r,\eps} 
\Bigg\|\bigg\|  
\sum_{\substack{n = n_1 + n_2
\\
|n_j|\sim N_j, |n_j|\leq N,  \, j=1,2
\\
1\leq |n_1 + n_2| \ll N_1^{1-a }}
} 
\jb{n_1}^{s}\ft{ u}_{n_1}
\jb{n_2}^{s}\ft{ u}_{n_2}
\bigg\|_{\l^{2-\eps}_n}
\Bigg\|_{L^p(d\mu_s)}\\[2mm]
& \leq C_{r,\eps} 
\Bigg\|\bigg\|  
\sum_{\substack{n = n_1 + n_2
\\
|n_j|\sim N_j, |n_j|\leq N, \,  j=1,2
\\
1\leq |n_1 + n_2| \ll N_1^{1-a }}
} 
\jb{n_1}^{s}\ft{ u}_{n_1}
\jb{n_2}^{s}\ft{ u}_{n_2}
\bigg\|_{L^p(d\mu_s)}
\Bigg\|_{\l^{2-\eps}(|n|\lesssim N_1^{1-a})}
\end{align*}

\noi
for any small $\eps > 0$.
Here, we have $n_1 + n_2 \ne 0$ thanks to the first projection $\P_{\ne 0}$
in the definition \eqref{Q0} of $Q_1(u, v)$, 
while we have 
$ |n_1 + n_2| = |n_3 + n_4| \les \max(N_3, N_4) \ll N_1^{1-a }$.
By the Wiener chaos estimate (Lemma \ref{LEM:hyp}) with \eqref{series} and $N_1 \sim N_2$, 
we have
\begin{align*}
\big\| Q_1^{{\bf N}} (u,v )\big\|_{L^p(d\mu_{s, N, r})}
& \leq C_{r,\eps} \,
 p \,
\Bigg\|\bigg\|\sum_{\substack{n = n_1 + n_2
\\
|n_j|\sim N_j, |n_j|\leq N,  \, j=1,2
\\
1\leq |n_1 + n_2| \ll N_1^{1-a }}
}
 \frac{g_{n_1}(\omega)}{\jb{n_1}}
\frac{g_{n_2 }(\omega)}{\jb{n_2}} \bigg\|_{L^2(\O)}
\Bigg\|_{\l^{2-\eps}_{n}(|n| \les N_1^{1-a})} 
\\[2mm]
& \lesssim
C_{r,\eps} \,
 p \,
 \bigg\|\Big(\sum_{|n_1|\sim N_1}
N_1^{-4}\Big)^\frac{1}{2}
\bigg\|_{\l^{2-\eps}_{n}(|n| \les N_1^{1-a})} 
\\
& \sim p N_1^{-1}
\big\|\ind_{|n| \les N_1^{1-a}}\big\|_{\l^{2-\eps}_{n}} 
 \sim p N_1^{-1} 
N_1^{\frac{2-2a}{2 - \eps} }
= p  
N_1^{\frac{-2a+\eps}{2 - \eps} }.
\end{align*}

\noi
Therefore, 
by choosing sufficiently small $\eps>0$ such that $\eps<2a$, 
we have a negative power of $N_1$ that can be used to sum over the dyadic blocks.
This proves \eqref{Q1} in this case.

\medskip

\noi
$\bullet$ {\bf Subcase (ii.b):} $N_3\ges N_1^{1-a}$.
\\
\indent
By  Young's inequality, \eqref{Q3},
and H\"older's inequality, we have
\begin{align*}
\big\|   Q_1^{\bf N}  (u, v )\big\|_{L^p(d\mu_{s, N, r})} 
& \les
\Bigg\|  \bigg\|  
\sum_{\substack{n = n_1 + n_2+n_3 
\\
|n_j|\sim N_j,  |n_j|\leq N, \, j=1,2,3
\\n_{1}+n_2\ne0
\\ |n_1+n_2+n_3| \ll N_1^\frac{1}{100} }}
\jb{n_1}^{s} \ft{u}_{n_1}\jb{n_2}^{s}\ft u_{n_2}  \ft{v}_{n_3}
\bigg\|_{\l^{q}_n}
 \underbrace{  \|  \ft u_{n_4} \|_{\l^{1+}_{n_4}}}_{ \les E_N(u,v)^\frac{1}{2}}
\Bigg\|_{L^p(d\mu_{s, N, r})}\\[2mm]
& \les
C_r \Bigg\|  \ind_{\{E_N(u,v) \leq r\}}\bigg\|  
\sum_{\substack{n = n_1 + n_2+n_3 
\\
|n_j|\sim N_j,  |n_j|\leq N, \, j=1,2,3
\\n_{1}+n_2\ne0
\\ |n_1+n_2+n_3| \ll N_1^\frac{1}{100} 
}
}
\jb{n_1}^{s} \ft{u}_{n_1}\jb{n_2}^{s}\ft u_{n_2} \ft{v}_{n_3}
\bigg\|_{\l^{q}_n}
\Bigg\|_{L^p(d\mu_s)}
\end{align*}

\noi
for some $q \gg 1$ (to be chosen later).
Now, we can trivially write
\begin{align*}
\big\| Q_1^{\bf N}   (u,  v )\big\|_{L^p(d\mu_{s,N, r})}
&  \les
\Bigg\|  \ind_{\{E_N(u,v) \leq r\}}
\Bigg(
\sum_{|n| \ll N_1^{\frac{1}{100}}}
\bigg|
\sum_{\substack{n = n_1 + n_2+n_3 \\n_{1}+n_2\ne0\\|n_j|\sim N_j, |n_j|\leq N, \,  j=1,2,3 }}
\jb{n_1}^{s} \ft{u}_{n_1}
\jb{n_2}^{s}\ft u_{n_2}  \ft{v}_{n_3}
\bigg|^{\frac{q}{3}}
\\
& \hphantom{XXXX}
\times \bigg|\sum_{\substack{n = n_1 + n_2+n_3 \\n_{1}+n_2\ne0
\\|n_j|\sim N_j, |n_j|\leq N, \, j=1,2,3 }} 
\jb{n_1}^{s} \ft{u}_{n_1}
\jb{n_2}^{s}\ft u_{n_2}  \ft{v}_{n_3}
\bigg|^{\frac{2q}{3}}
\Bigg)^\frac{1}{q} \Bigg\|_{L^p(d\mu_{s})}.
\end{align*}

\noi
In the following, we estimate  the first and  second factors on the right-hand side above in a different manner. 
For the first factor, we shall use the energy restriction $E_N(u,v) \leq r$, 
while, for the second factor, we shall invoke the Wiener chaos estimate (Lemma \ref{LEM:hyp}). 
The balance between the powers is chosen so that we obtain $p$ to power one at the end. 
The main point  in this procedure is that we get tractable bounds with respect 
to the dyadic frequency  localization. 
Consequently,  in the case under consideration, we have
\begin{align}
\big\|Q_1^{\bf N}  & ( u, v   )\big\|_{L^p(d\mu_{s, N, r})} \notag\\
&  \les
\Bigg\| \ind_{\{E_N(u,v) \leq r\}}
\Bigg(
\sum_{|n| \ll N_1^{\frac{1}{100}}}
\bigg(
N_1^{2s-2}
\|\jb{n_1}  \ft{u}_{n_1}\|_{\l^2_{n_1}}
\|\jb{n_2} \ft{u}_{n_2}\|_{\l^2_{n_2}}
\underbrace{\|\ft{v}_{n_3}\|_{\l^1_{n_3}}}_{\les N_1\| v\|_{L^2}}
\bigg)^{\frac{q}{3}}
\notag \\
& \hphantom{XXXX}\times \bigg|
\sum_{
\substack{n = n_1 + n_2+n_3 
\\n_{1}+n_2\ne0
\\
|n_j|\sim N_j, |n_j|\leq N, j=1,2,3
}
}
\jb{n_1}^{s} \ft{u}_{n_1}
 \jb{n_2}^{s}\ft u_{n_2}  \ft{v}_{n_3}
\bigg|^{\frac{2q}{3}}
\Bigg)^\frac{1}{q} \Bigg\|_{L^p(d\mu_{s})}
\label{Q4}
\\
&  \leq C_r 
N_1^{\frac{2s-1}{3}}
\Bigg\|
\Bigg(\sum_{|n| \ll N_1^{\frac{1}{100}}}
\bigg|
\sum_{\substack{n = n_1 + n_2+n_3\\n_{1}+n_2\ne0 \\|n_j|\sim N_j, |n_j|\leq N, j=1,2,3 }} 
\jb{n_1}^{s} \ft{u}_{n_1}
\jb{n_2}^{s}\ft u_{n_2} \ft{v}_{n_3}
\bigg|^{\frac{2q}{3}}
\Bigg)^\frac{3}{2q} \Bigg\|_{L^{\frac{2p}{3}}(d\mu_s)}^{\frac{2}{3}}.
\notag 
\end{align}

Without loss of generality, assume  $p\geq q$.
Then, by Minkowski's inequality and  the Wiener chaos estimate (Lemma \ref{LEM:hyp})
with \eqref{series}, 
we have 
\begin{align}
\Bigg\|
\Bigg(\sum_{|n| \ll N_1^{\frac{1}{100}}}
& \bigg|
\sum_{\substack{n = n_1 + n_2+n_3\\n_{1}+n_2\ne0 \\|n_j|\sim N_j, |n_j|\leq N, \, j=1,2,3 }} 
\jb{n_1}^{s} \ft{u}_{n_1}
\jb{n_2}^{s}\ft u_{n_2}  \ft{v}_{n_3}
\bigg|^{\frac{2q}{3}}
\Bigg)^\frac{3}{2q} \Bigg\|_{L^{\frac{2p}{3}}(d\mu_s)}
\notag \\[2mm]
& \leq
\Bigg\|
\bigg\|
\sum_{\substack{n = n_1 + n_2+n_3\\n_{1}+n_2\ne0 \\|n_j|\sim N_j, |n_j|\leq N, \, j=1,2,3 }} 
\jb{n_1}^{s} \ft{u}_{n_1}
\jb{n_2}^{s}\ft u_{n_2}  \ft{v}_{n_3}
\bigg\|_{L^{\frac{2p}{3}}(d\mu_s)}
\Bigg\|_{\l^{\frac{2q}{3}}(|n|\ll N_1^{\frac{1}{100} })}
\notag  \\[2mm]
& \le
p^{\frac{3}{2}}
\Bigg\|
\bigg\|
\sum_{\substack{n = n_1 + n_2+n_3\\n_{1}+n_2\ne0 \\|n_j|\sim N_j, |n_j|\leq N, \, j=1,2,3 }} 
\jb{n_1}^{s} \ft{u}_{n_1}
\jb{n_2}^{s}\ft u_{n_2}  \ft{v}_{n_3}
\bigg\|_{L^2(d\mu_s)}
\Bigg\|_{\l^{\frac{2q}{3}}(|n|\ll N_1^{\frac{1}{100} })}
\notag  \\[2mm]
& =p^{\frac{3}{2}}
\Bigg\|
\bigg\|
\sum_{\substack{n = n_1 + n_2+n_3\\n_{1}+n_2\ne0 \\|n_j|\sim N_j, |n_j|\leq N, \,j=1,2,3 }} 
\frac{g_{n_1}}{\jb{n_1}}
\frac{g_{n_2}}{\jb{n_2}}
\frac{h_{n_3}}{\langle n_3\rangle^{s}}
\bigg\|_{L^2(\Omega)}
\Bigg\|_{\l^{\frac{2q}{3}}(|n|\ll N_1^{\frac{1}{100} })}
\notag  \\[2mm]
 &  \lesssim p^{\frac 32} 
\Bigg\|\bigg(
\sum_{|n_j|\sim N_j, \, j=2,3 } 
N_1^{-2} \frac{1}{\langle n_2 \rangle^2}\frac{1}{\langle n_3\rangle^{2s}} \bigg)^\frac{1}{2}
\Bigg\|_{\l^{\frac{2q}{3}}_{n}(|n| \ll N_1^{\frac 1{100}})}.
\label{Q5}
\end{align}

\noi
Summing over $n_2$ and $n_3$ with  $|n_2| \sim N_2$ and 
$ N_1^{1-a}\les N_3 \les N_1$, 
we have
\begin{align}
\text{LHS of }\eqref{Q5} 
& \les
 p^{\frac 32} \,
\bigg\|\Big(
N_1^{-2} N_1^{(-2s+2)(1-a)}
\Big)^\frac{1}{2}
\bigg\|_{\l^{\frac{2q}{3}}_{n}(|n| \ll N_1^{\frac 1{100}})}
\notag \\
&  =  p^\frac{3}{2} N_1^{-s + as - a}
\big\|\ind_{|n| \ll N_1^{\frac 1{100}}}
\big\|_{\l^{\frac{2q}{3}}_{n}}
 \les p^\frac{3}{2}
 N_1^{-s + as - a}N_1^{\frac{3}{100 q} }.
\label{Q5a}
\end{align}

\noi

Therefore, by choosing sufficiently large $q\gg 1$ and sufficiently small $a = a(s) > 0$, it follows
from \eqref{Q4} and \eqref{Q5a} that 
$$
\big\|Q_1^{\bf N}  ( u, v )\big\|_{L^p(d\mu_{s,N, r})}
\les Cp N_{1}^{-\frac13 +\frac 23 as - \frac 23 a+ \frac 1{50q}}
\les Cp N_{1}^{-\al}
$$

\noi
for some $\al > 0$.
Once again, we obtained a negative power of $N_1$,
allowing us to  sum over the dyadic blocks.
This proves \eqref{Q1} in  Subcase (ii.b).


\subsection{Estimate on $Q_3(u,v)$}

It remains to prove \eqref{Q1} for $j = 3$.
It turns out that  $Q_3(u,v)$ can be estimated  essentially in  the same manner as  $Q_1(u,v)$.
By integration by parts, we can express each summand in the definition of 
 $Q_3(u,v)$ as
\begin{equation}
\label{Q7}
\int_{\T^2}
\dd^{\kk}  v_N\cdot \dd^\al u_N \cdot\dd^\be u_N \cdot \dd^\g u_N,
\end{equation}

\noi
where $|\kk|\leq s-1$, $|\al|+|\be|+|\g| \leq s+1$,  and $\max(|\alpha|,|\beta|,|\gamma|)\leq s$. 

Let us first consider the case $\max(|\alpha|,|\beta|,|\gamma|)= s$.
By symmetry, we assume  that $|\alpha|\geq |\beta|\geq |\gamma|$ and therefore $|\alpha|=s$. We then necessarily have $|\beta|=1$ and $|\gamma|=0$. 
Then, 
we can treat \eqref{Q7} exactly in the same manner as we did for $Q_1(u,v)$ 
by  replacing
$ \P_{\ne 0}[ (J^s u_N)^2  ]$
and $\P_{\ne 0} [v_N  u_N]$ in the definition \eqref{Q0} of $Q_1(u, v)$
with $\dd^\kk v_N\cdot \dd^\al u_N$
and $\dd^\be u_N \cdot \dd^\g u_N$, respectively.
Note that, while  
the frequency projection 
$ \P_{\ne 0}[ (J^s u_N)^2  ]$
in the definition of $Q_1(u, v)$ played an important role
in eliminating the logarithmic divergence, 
we do not need a frequency projection $\P_{\ne 0}$ on 
$\dd^\kk v_N\cdot \dd^\al u_N$
since, in view of \eqref{series}, the independence of $v_N$ and $u_N$ prevents
such logarithmic divergence.

Therefore, we can suppose that  
$\max(|\alpha|,|\beta|,|\gamma|)\leq  s-1$.
We only consider the worst case $|\al|+|\be|+|\g| = s + 1$ and $|\kk| = s-1$ in the following.
In this case, noting that 
$\dd^\kk v_N $ 
with $|\kk| = s-1$
behaves  like $J^s \pi_N u$ (see \eqref{series}), 
we can basically proceed as we did for $Q_1(u, v)$ in the previous subsection.
 Indeed, by applying the Littlewood-Paley decomposition, we need 
to study  the expression of the form
\begin{equation*}
\wt Q_1^{\bf N}(u, v) = \int_{\T^2}
\dd^\kk  \P_{N_1} v_N\cdot \dd^\al \P_{N_2}u_N\cdot 
\dd^\be \P_{N_3} u_N\cdot \dd^\g \P_{N_4}u_N.
\end{equation*}

\noi
By symmetry, assume $N_2 \geq N_3 \geq N_4$.
Then, we have
\begin{equation}\label{Q8}
\wt Q_1^{\bf N}(u, v) \sim  \int_{\T^2}
\dd^\kk  \P_{N_1} v_N\cdot 
N_2^{s - |\al|} \dd^\al \P_{N_2}u_N\cdot 
N_2^{1 - |\be|}\dd^\be \P_{N_3} u_N\cdot 
N_2^{-|\g|}\dd^\g \P_{N_4}u_N.
\end{equation}

\noi
As mentioned above,  the first factor
$\dd^\kk  \P_{N_1} v_N$
 in \eqref{Q8} behaves like 
 $J^s \P_{N_1}u_N$ in \eqref{Q1a}.
 The second factor
$N_2^{s - |\al|} \dd^\al \P_{N_2}u_N 
\sim \dd^{\wt \al} \P_{N_2} u_N$ with $|\wt \al| = s$
 also behaves like  the second factor  $J^s \P_{N_2}u_N$ in \eqref{Q1a}.
Similarly, the third and fourth factors in \eqref{Q8}:
\[N_2^{1 - |\be|}\dd^\be \P_{N_3} u_N 
\, \text{``}\!\les \!\text{''}\, 
\nb \P_{N_3} u_N 
\qquad \text{and}\qquad
N_2^{-|\g|}\dd^\g \P_{N_4}u_N
\, \text{``}\!\les \!\text{''}\, 
\P_{N_4}u_N\]

\noi
behave (at worst) like the third and fourth factors in \eqref{Q1a}, respectively.
Hence, we can estimate  $\wt Q_1^{\bf N}(u, v)$
just as we did for $ Q_1^{\bf N}(u, v) $ in the previous section.
This completes the proof of Theorem \ref{THM:2}.


\section{Proof of Theorem~\ref{THM:NLKG}}\label{SEC:proof_th1} 

In this section, we prove
quasi-invariance of the Gaussian measure $\mu_s$
under the NLKG dynamics (Theorem \ref{THM:NLKG}).
While the general structure of the argument is similar to our previous works \cite{Tzvet, OTz}
(see also~\cite{OTz2} for a concise sketch of the general structure), 
we proceed differently in some  part (see Proposition~\ref{PROP:meas3}).

\subsection{A change-of-variable formula}
As in  our previous works \cite{Tzvet, OTz} ,
the change-of-variable formula (Lemma~\ref{LEM:cov}) 
for the nonlinear transformation induced by the truncated flow $\Phi_N(t)$
plays an  important role.
We also point out that these change-of-variable formulas
in this paper and in \cite{Tzvet, OTz}
are  in turn  inspired by~\cite{TzV1}.

Let $\Ld$ be as in \eqref{Ld1}.
Given  $N \in \N$, we denote by $\EE_N$ the real vector space: 
$$
\mathcal{E}_N = \text{span} \big\{1, \cos(n\cdot x),\, \sin(n\cdot x): n \in \Ld_N^*\big\}, 
$$

\noi
where $\Ld_N^* = \{n \in \Z^2:  0 < |n| \leq N \} \cap \Ld$.
We equip $\EE_N$ with the natural scalar product.
Moreover, we endow $\EE_N\times \EE_N$ with a Lebesgue measure $L_N$ as follows.
Given
$$
(\pi_Nu)(x)=\sum_{|n|\leq N} \ft u_n \, e^{in\cdot x},\qquad \ft u_{-n}=\overline{\ft u_{n}}, 
$$

\noi
let  $a_n = \Re\ft u_n$ and $b_n = \Im \ft u_n$, $(a_n,b_n)\in \R^2$.
Then, we have
$$
(\pi_Nu)(x)=a_0+ \sum_{n \in \Ld_N^*}\big\{a_n (2\cos(n\cdot x))+b_n (-2\sin(n\cdot x))\big\}.
$$

\noi
Therefore, it is natural to define $L_N$ as the Lebesgue measure on $\EE_N\times \EE_N$ 
 with respect to the orthogonal basis: 
$$
\Big\{1, \big\{2\cos(n\cdot x),-2\sin(n\cdot x)\big\}_{n \in \Ld_N^*}\Big\}
\times \Big\{1, \big\{2\cos(n\cdot x),-2\sin(n\cdot x)\big\}_{n \in \Ld_N^*}\Big\}.
$$

Next, we denote by $(\EE_N\times \EE_N)^\perp$ 
the orthogonal complement of $\EE_N\times \EE_N$ in  $\H^\s(\T^2)$, $\sigma<s$.
We endow $(\EE_N\times \EE_N)^\perp$ with the marginal Gaussian  measure $\mu^\perp_{s;N}$ 
on $\pi_N^\perp \H^\s(\T^2)$ 
which is defined as the induced probability measure under the map:
$$
\o \in \O \longmapsto (\pi_N^\perp u^\o,\pi_N^\perp v^\o),
$$

\noi
where $(u^\o, v^\o)$ is as in \eqref{series}.
By viewing  the Gaussian measure  $\mu_s$ as a product measure on $(\EE_N\times \EE_N)
\times (\EE_N\times \EE_N)^\perp$, 
we can write the truncated weighted Gaussian measure 
$\rho_{s, N, r}$ defined in \eqref{K0} as
\begin{align*}
d\rho_{s, N, r}(u,v)&  =  Z_{s,N, r}^{-1} \ind_{\{E_N(u,v) \leq r\}}\,
e^{-  R_{s, N}(\pi_{N}u)} d \mu_s(u,v) 
\\
& =  \hat{Z}_{s,N, r}^{-1} \ind_{\{E_N(u,v) \leq r\}}\,
e^{-  E_{s, N}(\pi_{N}u,\pi_N v)}\,  dL_N\otimes d  \mu^\perp_{s;N},
\end{align*}

\noi
where $ \hat{Z}_{s,N, r}$ is defined by
$$
 \hat{Z}_{s,N, r}
 =\int_{ {\mathcal H}^\sigma(\T^2)} \ind_{\{E_N(u,v) \leq r\}}\,
 e^{-  E_{s, N}(\pi_{N}u,\pi_N v)}\,  dL_N\otimes  d\mu^\perp_{s;N}.
 $$
 
 \noi
 Then, we have the following change-of-variable formula.
 
\begin{lemma}\label{LEM:cov}
Let $s>1$, $N \in \N$, and $r > 0$.
Then, we have 
$$
\rho_{s,N,r}(\Phi_N(t)(A))=\hat{Z}_{s,N, r}^{-1} 
\int_A
\ind_{\{E_N(u,v) \leq r\}}\,
e^{-  E_{s, N}(\pi_{N}\Phi_N(t)(u,v))}
 \, dL_N\otimes d \mu^\perp_{s;N}
$$

\noi
for any $t \in \R$ and any measurable set
 $A\subset {\mathcal H}^\sigma(\T^2)$, $\sigma\in (1,s)$.
\end{lemma}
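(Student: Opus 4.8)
The plan is to reduce the identity to the product structure of the truncated dynamics with respect to the decomposition $(\EE_N\times\EE_N)\times(\EE_N\times\EE_N)^\perp$ fixed in the discussion preceding the statement. First I would observe that, applying $\pi_N$ and $\pi_N^\perp$ to \eqref{KG-sys_N} and using $\pi_N^\perp\pi_N = 0$, the flow $\Phi_N(t)$ decouples along this splitting: the component $\pi_N\Phi_N(t)(u,v)$ depends only on $(\pi_N u,\pi_N v)$ and is the flow of the finite-dimensional Hamiltonian system $\partial_t u_N = v_N$, $\partial_t v_N = \Delta u_N - u_N - \pi_N(u_N^3)$ on $\EE_N\times\EE_N$, while $\pi_N^\perp\Phi_N(t)(u,v) = \bar{S}(t)(\pi_N^\perp u,\pi_N^\perp v)$ is the free Klein--Gordon evolution, which commutes with $\pi_N$ and hence preserves $(\EE_N\times\EE_N)^\perp$. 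In particular, $\Phi_N(t)$, and also its inverse $\Phi_N(-t)$ (group property, Lemma~\ref{LEM:CP}), is a product map with respect to the splitting.

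Next I would establish that the product reference measure $L_N\otimes\mu^\perp_{s;N}$ is invariant under $\Phi_N(t)$. On the low-frequency factor, writing the vector field in the real coordinates $a_n = \Re\ft u_n$, $b_n = \Im\ft u_n$, $\tilde a_n = \Re\ft v_n$, $\tilde b_n = \Im\ft v_n$ for $|n|\le N$, one checks that the right-hand side of the $u$-equation does not involve the $v$-variables and the right-hand side of the $v$-equation does not involve the $u$-variables, so the field is divergence-free; by Liouville's theorem the finite-dimensional flow, and its inverse, preserves $L_N$. On the high-frequency factor, expanding $\bar{S}(t)$ mode by mode shows that on the $n$-th Fourier mode with $|n|>N$ it acts as a rotation of the pair $(g_n,h_n)$ by the angle $t\jb{n}$, keeping distinct modes independent; since the law of each $(g_n,h_n)$ is rotation invariant, $\bar{S}(t)$ preserves the marginal Gaussian measure $\mu^\perp_{s;N}$. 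Since the pushforward of a product measure under a product map is the product of the pushforwards, this gives $(\Phi_N(-t))_*(L_N\otimes\mu^\perp_{s;N}) = L_N\otimes\mu^\perp_{s;N}$.

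Finally, starting from the representation
\[
\rho_{s,N,r}(\Phi_N(t)(A)) = \hat{Z}_{s,N,r}^{-1}\int_{\Phi_N(t)(A)}\ind_{\{E_N(u,v)\le r\}}\,e^{-E_{s,N}(\pi_N u,\pi_N v)}\,dL_N\otimes\mu^\perp_{s;N},
\]
I would substitute $(u,v) = \Phi_N(t)(u',v')$, using that $\Phi_N(t)$ is a bijection on $\H^\s(\T^2)$ (Lemma~\ref{LEM:CP}) together with the invariance just proved, to rewrite the right-hand side as
\[
\hat{Z}_{s,N,r}^{-1}\int_A \ind_{\{E_N(\Phi_N(t)(u',v'))\le r\}}\,e^{-E_{s,N}(\pi_N\Phi_N(t)(u',v'))}\,dL_N\otimes\mu^\perp_{s;N}.
\]
Since the truncated energy $E_N$ in \eqref{E2} is conserved by $\Phi_N(t)$, the indicator equals $\ind_{\{E_N(u',v')\le r\}}$, which is precisely the asserted identity. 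This argument follows the scheme of \cite{Tzvet, OTz, TzV1}, and I do not expect a genuine obstacle: the only points requiring care are the bookkeeping of the product decomposition of both $\Phi_N(t)$ and $L_N\otimes\mu^\perp_{s;N}$, and the elementary verification that the free Klein--Gordon flow leaves the Gaussian measure $\mu_s$, and hence its high-frequency marginal $\mu^\perp_{s;N}$, invariant.
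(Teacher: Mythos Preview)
Your argument is correct and is precisely the approach the paper defers to in citing \cite{Tzvet, OTz, TzV1}: decouple $\Phi_N(t)$ along the splitting $(\EE_N\times\EE_N)\times(\EE_N\times\EE_N)^\perp$, use Liouville's theorem on the finite-dimensional factor and rotation invariance of the Gaussian on the high-frequency factor to get invariance of $L_N\otimes\mu^\perp_{s;N}$, then change variables and invoke conservation of $E_N$. One slip in your write-up: in the divergence-free check you have the dependencies reversed---the right-hand side of the $u$-equation is $v$ (so it does not involve the $u$-variables, hence $\partial_{a_n},\partial_{b_n}$ of it vanish), and the right-hand side of the $v$-equation depends only on $u$ (so $\partial_{\tilde a_n},\partial_{\tilde b_n}$ of it vanish); the conclusion is of course unchanged.
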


Lemma~\ref{LEM:cov} 
follows from similar considerations presented in~\cite{Tzvet, OTz} 
and therefore we omit its proof.

\subsection{The evolution of the truncated measures}

We now study the evolution of the truncated measures $\rho_{s, N, r}$.
We shall use the renormalized energy estimate
(Theorem~\ref{THM:2})   as a key step in the proof of the following statement. 
Due to the use of Theorem~\ref{THM:2}, 
 we assume that $s\geq 2$ is an even integer in the following.
While all the implicit constants depend on $s$, we may not state their dependence
in an explicit manner.

\begin{lemma}\label{LEM:meas2}
 Given  $r > 0$, 
there exists $C_{ r}>0$ such that 
\begin{align*}
\frac{d}{dt} \rho_{s, N, r}(\Phi_N(t) (A))
\leq C_{r} p \, \big\{ \rho_{s, N, r} (\Phi_N(t)(A))\big\}^{1-\frac 1p}
\end{align*}
\noi
for any $p \geq 2$, 
any $N \in \N$, any $t \in \R$, 
and  any measurable set 
$A \subset \H^\sigma(\T^2)$, $\sigma\in (1,s)$.
\end{lemma}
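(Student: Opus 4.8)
The plan is to differentiate the change-of-variable formula from Lemma~\ref{LEM:cov} in $t$ and bound the resulting integrand using the renormalized energy estimate (Theorem~\ref{THM:2}) together with H\"older's inequality. Writing $B_t := \Phi_N(t)(A)$ and using Lemma~\ref{LEM:cov}, we have
\[
\rho_{s,N,r}(B_t) = \hat Z_{s,N,r}^{-1}\int_A \ind_{\{E_N(u,v)\leq r\}}\, e^{-E_{s,N}(\pi_N\Phi_N(t)(u,v))}\, dL_N\otimes\mu^\perp_{s;N}.
\]
Since $E_N$ is conserved under $\Phi_N(t)$ (see \eqref{E2}), the indicator is constant in $t$ along the flow, so differentiating under the integral sign gives
\[
\frac{d}{dt}\rho_{s,N,r}(B_t) = -\hat Z_{s,N,r}^{-1}\int_A \ind_{\{E_N(u,v)\leq r\}}\, \Big(\tfrac{d}{dt}E_{s,N}(\pi_N\Phi_N(t)(u,v))\Big)\, e^{-E_{s,N}(\pi_N\Phi_N(t)(u,v))}\, dL_N\otimes\mu^\perp_{s;N}.
\]
By the group property $\Phi_N(t+\tau)=\Phi_N(t)\circ\Phi_N(\tau)$ and the change of variables $(u,v)\mapsto\Phi_N(t)(u,v)$, which preserves $L_N\otimes\mu^\perp_{s;N}$ up to the weight encoded in $E_{s,N}$ (this is exactly the content of Lemma~\ref{LEM:cov}), one rewrites this as an integral over $B_t$ against $d\rho_{s,N,r}$ with the time derivative evaluated at $t=0$. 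Concretely,
\[
\frac{d}{dt}\rho_{s,N,r}(B_t) = -\int_{B_t} \frac{d}{d\tau}E_{s,N}(\pi_N\Phi_N(\tau)(u,v))\Big\vert_{\tau=0}\, d\rho_{s,N,r}(u,v).
\]

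Next I would apply H\"older's inequality with exponents $p$ and $p'=\frac{p}{p-1}$ on the measure space $(\H^\sigma(\T^2),\rho_{s,N,r})$:
\[
\bigg|\frac{d}{dt}\rho_{s,N,r}(B_t)\bigg|
\leq \bigg\|\frac{d}{d\tau}E_{s,N}(\pi_N\Phi_N(\tau)(u,v))\big\vert_{\tau=0}\bigg\|_{L^p(d\rho_{s,N,r})}\cdot \big(\rho_{s,N,r}(B_t)\big)^{1-\frac1p}.
\]
It then remains to control the $L^p(d\rho_{s,N,r})$ norm of the energy derivative by $C_r p$. Since $d\rho_{s,N,r} = Z_{s,N,r}^{-1}\ind_{\{E_N\leq r\}}e^{-R_{s,N}(\pi_N u)}\, d\mu_s$, I would first pass from $L^p(d\rho_{s,N,r})$ to $L^{2p}(d\mu_{s,N,r})$ at the cost of the factor $\big\|\ind_{\{E_N\leq r\}}e^{-R_{s,N}(\pi_N u)}\big\|_{L^{2}(d\mu_s)}^{1/p}$, which is bounded uniformly in $N$ by Proposition~\ref{PROP:QFT} (together with the lower bound on the partition function $\hat Z_{s,N,r}$, also uniform in $N$, coming from the same proposition applied with, say, a fixed small ball). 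Then Theorem~\ref{THM:2} gives directly
\[
\bigg\|\frac{d}{d\tau}E_{s,N}(\pi_N\Phi_N(\tau)(u,v))\big\vert_{\tau=0}\bigg\|_{L^{2p}(d\mu_{s,N,r})}\leq C_r\, p,
\]
and combining these bounds yields the claimed inequality with a constant $C_r$ depending only on $r$ (and $s$), after absorbing the harmless $2$ in the exponent into $C_r$ and replacing $p$ by $2p$ throughout if desired.

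The routine but slightly delicate points are: (i) justifying the differentiation under the integral sign and the reduction to the time derivative at $\tau=0$ — this is the place where the $L_N\otimes\mu^\perp_{s;N}$ product structure and Lemma~\ref{LEM:cov} are used, and it is entirely parallel to \cite{Tzvet, OTz}, so I would state it and refer there; and (ii) checking that the normalization constant $\hat Z_{s,N,r}$ stays bounded away from $0$ uniformly in $N$, which follows from \eqref{vtoro} and \eqref{treto} in Proposition~\ref{PROP:QFT} since $\rho_{s,N,r}$ is a probability measure close to $\rho_{s,r}$. The genuine analytic input — and the only nontrivial obstacle — is already packaged into Theorem~\ref{THM:2}; the present lemma is essentially its ``differential'' consequence, so I expect no real difficulty beyond careful bookkeeping of the exponents so that the final power of $p$ is linear.
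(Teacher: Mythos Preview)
Your proposal is correct and follows essentially the same route as the paper: reduce to the derivative at $t=0$ via the flow property and Lemma~\ref{LEM:cov}, differentiate the weight $e^{-E_{s,N}}$, apply H\"older's inequality, and then bound the $L^p(d\rho_{s,N,r})$ norm of $\partial_t E_{s,N}$ by splitting off the density via Cauchy--Schwarz and invoking Proposition~\ref{PROP:QFT} and Theorem~\ref{THM:2}. The only cosmetic difference is that the paper first shifts $t\mapsto t+t_0$ and then differentiates at $0$, whereas you differentiate first and then change variables; both arrive at the same integral over $\Phi_N(t_0)(A)$.
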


While the proof of Lemma~\ref{LEM:meas2} also follows
from the argument in our previous works~\cite{Tzvet, OTz}, 
we present its details in order to show the  use 
of the crucial renormalized energy estimate.

\begin{proof}
Fix $t_0\in\R$. 
As in~\cite{TzV1, Tzvet, OTz}, the main idea is to reduce
the analysis to that at $t = 0$.
Using the flow property of $\Phi_N(t)$, we have
\begin{align*}
\frac{d}{dt} \rho_{s, N, r}(\Phi_N(t) (A))\bigg|_{t=t_0}
& =
 Z_{s,N, r}^{-1} 
\frac{d}{dt}
\int_{\Phi_N(t)(A)}\ind_{\{E_N(u,v) \leq r\}}\,e^{-  R_{s, N}(\pi_{N}u)} d \mu_s(u,v)\bigg|_{t=t_0}
\\
& =
Z_{s,N, r}^{-1} 
\frac{d}{dt}
\int_{\Phi_N(t)(\Phi_{N}(t_0)(A))}\ind_{\{E_N(u,v) \leq r\}}\,e^{-  R_{s, N}(\pi_{N}u)} d \mu_s(u,v)\bigg|_{t=0}.
\end{align*}

\noi
By  the change-of-variable formula (Lemma~\ref{LEM:cov}),  we have
\begin{align*}
\frac{d}{dt} & \rho_{s, N, r}  (\Phi_N  (t) (A))\bigg|_{t=t_0}
\\
& =
\hat{Z}_{s,N, r}^{-1} 
\frac{d}{dt}
\int_{\Phi_{N}(t_0)(A)}
\ind_{\{E_N(u,v) \leq r\}}\,
e^{-  E_{s, N}(\pi_{N}\Phi_N(t)(u,v))}
  dL_N\otimes  d \mu^\perp_{s;N}
\bigg|_{t=0}
\\
& =Z_{s,N, r}^{-1} 
\int_{\Phi_{N}(t_0)(A)}\ind_{\{E_N(u,v) \leq r\}}\,
\dt E_{s, N}(\pi_N\Phi_N(t)(u,v)) |_{t=0}\,
e^{-  R_{s, N}(\pi_{N}u)} d \mu_s(u,v) .
\end{align*}

\noi
Now, H\"older's inequality yields
\begin{align*}
\frac{d}{dt} \rho_{s, N, r}(\Phi_N(t) (A))\bigg|_{t=t_0}
& \leq 
\big\|
\dt E_{s, N}(\pi_N\Phi_N(t)(u,v))|_{t=0}
\big\|_{L^p(\rho_{s, N, r})} \\
& \hphantom{XXXXXXXXX}
\times
\big\{\rho_{s, N, r}(\Phi_N(t_0) (A))\big\}^{1-\frac{1}{p}}.
\end{align*}
Observe that  Proposition~\ref{PROP:QFT} implies that $Z_{s,N, r}^{-1}$ is bounded, uniformly in $N$. 
Finally, by Cauchy-Schwarz inequality
 together with the uniform estimate \eqref{vtoro} in Proposition~\ref{PROP:QFT} 
 and Theorem~\ref{THM:2}, we obtain
 \begin{align*}
\big\|\dt E_{s, N}
& (\pi_N\Phi_N(t)(u,v))|_{t=0}
\big\|_{L^p(\rho_{s, N, r})} \\
& \leq Z_{s,N, r}^{-\frac{1}{p}}\,
\Big\|
\dt E_{s, N}(\pi_N\Phi_N(t)(u,v))\vert_{t=0}
\Big\|_{L^{2p}(\mu_{s, N, r})}\,
 \Big\|\ind_{\{E_N(u,v) \leq r\}}\,e^{-  R_{s, N}(\pi_{N}u)}  \Big\|_{L^{2}(\mu_s)}^\frac{1}{p}\\
&  \leq C_r p,
 \end{align*}
 
 \noi
since  
 $ Z_{s,N, r}^{-\frac{1}{p}} \leq C(s, r)$ for any $p \geq 2$ and $N \in \N$. 
This completes the proof of Lemma~\ref{LEM:meas2}.
\end{proof}

As a corollary to Lemma~\ref{LEM:meas2},  
we obtain the following control on the truncated measures $\rho_{s, N, r}$. 
We point out that this is where our argument diverges from the presentation in our previous works~\cite{Tzvet, OTz}.

\begin{proposition}\label{PROP:meas3}
Given $r > 0$, there exists $t_{r} > 0$ such that given $\eps > 0$,  there exists $\dl > 0$ such that
 if,  for a measurable set $A \subset {\mathcal H}^\sigma(\T^2)$, $\sigma\in (1,s)$,
 there exists $N_0 \in \N$ such that 
$$
\rho_{s, N, r} ( A)< \dl
$$

\noi
for any $N \geq N_0$, then  we have 
$$
\rho_{s, N, r} (\Phi_N(t) (A)) < \eps
$$

\noi
for any $t \in [0, t_{r}]$ and any $N \geq N_0$.

\end{proposition}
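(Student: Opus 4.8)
The plan is to convert the differential inequality of Lemma~\ref{LEM:meas2} into an integrated bound by an elementary ODE comparison, and then to optimize in the free parameter $p$.

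First I would fix $r>0$ and, for $N\geq N_0$, abbreviate $\gamma(t):=\rho_{s,N,r}(\Phi_N(t)(A))$. Before running the comparison argument one must make sure that $\gamma$ never vanishes, so that one may safely divide by its powers: I expect to dispose of the degenerate case $\rho_{s,N,r}(A)=0$ separately, where the change-of-variable formula (Lemma~\ref{LEM:cov}), together with the strict positivity and finiteness of the densities $\ind_{\{E_N(u,v)\leq r\}}\,e^{-E_{s,N}(\pi_N\Phi_N(t)(u,v))}$ against $L_N\otimes\mu^\perp_{s;N}$, forces $\gamma(t)=0$ for every $t$, so that there is nothing to prove; in the remaining case the same reasoning gives $\gamma(t)>0$ for all $t$.

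Next, on the set where $\gamma>0$, I would set $\beta:=\gamma^{1/p}$. By Lemma~\ref{LEM:meas2}, $\gamma$ is differentiable and, for every $p\geq 2$, $\gamma'(t)\leq C_r\,p\,\gamma(t)^{1-1/p}$ with $C_r$ \emph{independent of $p$}; hence $\beta'(t)=\tfrac1p\,\gamma(t)^{1/p-1}\gamma'(t)\leq C_r$, and integrating yields $\gamma(t)\leq\big(\gamma(0)^{1/p}+C_r t\big)^p$ for all $t\geq 0$. I would then choose $t_r:=(2C_r)^{-1}$, so that $C_r t_r=\tfrac12$ and therefore $\gamma(t)\leq\big(\delta^{1/p}+\tfrac12\big)^p$ for all $t\in[0,t_r]$ whenever $\rho_{s,N,r}(A)=\gamma(0)<\delta$. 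Finally, given $\eps>0$, I would pick an integer $p\geq 2$ with $(3/4)^p<\eps$ and set $\delta:=4^{-p}$, so that $\delta^{1/p}=\tfrac14$ and $\gamma(t)\leq(3/4)^p<\eps$ on $[0,t_r]$, uniformly in $N\geq N_0$, which is exactly the assertion.

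I do not expect a serious obstacle here: the analytic content has been absorbed into Lemma~\ref{LEM:meas2} (hence ultimately into the renormalized energy estimate, Theorem~\ref{THM:2}), and what remains is soft. The only points requiring care are (i) the non-vanishing of $\gamma$, handled by the dichotomy above, and (ii) the fact that the constant $C_r$ in Lemma~\ref{LEM:meas2} does not depend on $p$ --- it is precisely this uniformity that allows $t_r$ to be chosen \emph{before} $\eps$, as the statement demands; were $C_r$ permitted to grow with $p$, the present argument would collapse.
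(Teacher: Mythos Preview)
Your argument is correct and follows essentially the same route as the paper: both derive the integrated bound $\gamma(t)\le(\gamma(0)^{1/p}+C_r t)^p$ from Lemma~\ref{LEM:meas2}, fix $t_r$ so that $C_r t_r$ is a small absolute constant, and then optimize in $p$. The only cosmetic difference is in the final step: the paper applies the convexity inequality $((x+y)/2)^p\le (x^p+y^p)/2$ and sets $p=-\log_2\eps$, whereas you pick explicit values $\delta=4^{-p}$ and $p$ with $(3/4)^p<\eps$; your treatment of the degenerate case $\gamma(0)=0$ is an extra bit of care the paper omits.
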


\begin{remark}\rm

In Proposition~\ref{PROP:meas3}, 
we can choose  $t_r > 0$ and $\dl > 0$ such that they are independent of $N \in \N$.
Moreover,  $\dl > 0$ is independent of $t_r>0$.

\end{remark}

\begin{proof}
From  Lemma~\ref{LEM:meas2}, we have 
\begin{align}
\frac{d}{dt}\big\{ \rho_{s, N, r}(\Phi_N(t) (A))\big\}^\frac{1}{p}
\leq C_{r}
\label{meas3}
\end{align}

\noi
for any $p \geq 2$.
Integrating \eqref{meas3} from 0 to $t$, we obtain
\begin{align}
 \rho_{s, N, r}(\Phi_N(t) (A))
&  \leq 
\big\{ \big(  \rho_{s, N, r}(A)\big)^\frac{1}{p} + C_{ r} t\big\}^p.
\label{meas4}
\end{align}

\noi
Now, choose $t_r > 0$ such that $C_{r}t_r = \frac 14$.
Without loss of generality,  assume  $\eps<1$. 
It follows from \eqref{meas4} and  the convexity inequality:
$$
\Big(
\frac{x+y}{2}
\Big)^p
\leq\frac{ x^p+y^p}{2},\qquad x,y\geq 0,\, p\geq 1
$$
that for $t\in [0,t_r]$,
\begin{align*}
 \rho_{s, N, r}(\Phi_N(t) (A))
&  \leq 
2^{p-1}
  \rho_{s, N, r}(A) + 2^{p-1}( C_{ r} t_r)^p\notag\\
&   \leq 
2^{p-1}  \rho_{s, N, r}(A) + 2^{-p-1} 
\notag\\
\intertext{by setting $p = p(\eps)= - \log_2 \eps $, } 
&   \leq 
2^{p(\eps)-1}
\dl + \tfrac 12 \eps \notag\\
& < \eps,
\end{align*}

\noi
by choosing $\dl = \dl(\eps) > 0$ sufficiently small.
This completes the proof of Proposition~\ref{PROP:meas3}.
\end{proof}

\subsection{Proof of Theorem \ref{THM:NLKG}}

We conclude this section by presenting the proof of Theorem~\ref{THM:NLKG}.
Proposition~\ref{PROP:meas3} implies that the truncated
weighted Gaussian measures $\rho_{s, N, r}$
are quasi-invariant under the truncated NLKG dynamics $\Phi_N(t)$
with the uniform control in $N\in \N$.
We first upgrade Proposition~\ref{PROP:meas3} to the untruncated 
weighted Gaussian measure $\rho_{s, r}$ 
defined in~\eqref{QFT1}.
Then, we exploit the mutual absolute continuity 
between $\rho_{s, r}$ and $\mu_{s, r}$, implying quasi-invariance
of $\mu_{s, r}$ under the full NLKG dynamics $\Phi(t) = \Phi_{\NKG}(t)$.
Finally, we conclude quasi-invariance of $\mu_s$ by taking $r \to \infty$.

\begin{lemma}\label{LEM:meas4}
Given $r > 0$, there exists $t_{ r} > 0$ such that 
given $\eps > 0$, 
there exists $\dl > 0$ such that 
if 
$$
\rho_{s,r} ( A)< \dl 
$$
 for a measurable set $A \subset {\mathcal H}^\sigma(\T^2)$, $\sigma\in (1,s)$,  
then we have 
$$
\rho_{s,  r} (\Phi(t) (A)) < \eps
$$
for any $t \in [0, t_{r}]$.
Note that $\dl > 0$ is independent of 
$t \in [0, t_{r}]$.
\end{lemma}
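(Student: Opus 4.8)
The plan is to transfer the uniform-in-$N$ statement of Proposition~\ref{PROP:meas3} to the limiting measure $\rho_{s, r}$ by using the `uniform convergence' property \eqref{rho_s}, namely that given any $\eta > 0$ there exists $N_0 \in \N$ with $|\rho_{s, r}(B) - \rho_{s, N, r}(B)| < \eta$ for every $N \geq N_0$ and every measurable $B \subset \H^\sigma(\T^2)$. Let $t_r > 0$ be the time furnished by Proposition~\ref{PROP:meas3}; we keep the same $t_r$. Given $\eps > 0$, apply Proposition~\ref{PROP:meas3} with $\tfrac \eps3$ in place of $\eps$ to obtain $\dl_0 > 0$; we will take $\dl = \tfrac 12 \dl_0$ (shrinking further if needed below). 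The key point is that the approximation bound \eqref{rho_s} is \emph{uniform over measurable sets}, so it applies simultaneously to $A$ and to the moving set $\Phi(t)(A)$.

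First I would suppose $\rho_{s, r}(A) < \dl$. Choose $\eta = \min\big(\tfrac{\dl_0}{2}, \tfrac{\eps}{3}\big)$ and let $N_0$ be the corresponding threshold from \eqref{rho_s}. Then for every $N \geq N_0$ we have $\rho_{s, N, r}(A) \leq \rho_{s, r}(A) + \eta < \dl + \tfrac{\dl_0}{2} \leq \dl_0$, so the hypothesis of Proposition~\ref{PROP:meas3} is met for all $N \geq N_0$; hence $\rho_{s, N, r}(\Phi_N(t)(A)) < \tfrac\eps3$ for all $t \in [0, t_r]$ and all $N \geq N_0$. Next I would pass to the limit $N \to \infty$ at fixed $t \in [0, t_r]$. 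Write
\begin{align*}
\rho_{s, r}(\Phi(t)(A))
&\leq \big|\rho_{s, r}(\Phi(t)(A)) - \rho_{s, N, r}(\Phi(t)(A))\big| \\
&\quad + \big|\rho_{s, N, r}(\Phi(t)(A)) - \rho_{s, N, r}(\Phi_N(t)(A))\big| + \rho_{s, N, r}(\Phi_N(t)(A)).
\end{align*}
The first term is $< \eta \leq \tfrac\eps3$ by \eqref{rho_s}; the third is $< \tfrac\eps3$ by the previous step. For the middle term, I would use the approximation property of the truncated dynamics (Lemma~\ref{LEM:approx}): since $\mu_s$ is a Radon measure on $\H^\sigma(\T^2)$, it is inner regular, so it suffices to control the symmetric difference $\Phi(t)(A) \triangle \Phi_N(t)(A)$ on compact sets, and Lemma~\ref{LEM:approx} gives $\Phi(t)(K) \subset \Phi_N(t)(K + B_{\eps', \sigma})$ for $N$ large; combined with the absolute continuity of $\rho_{s, N, r}$ with respect to $\mu_s$ (with $L^p$-bounded density, uniformly in $N$, by Proposition~\ref{PROP:QFT}) and a continuity-in-$\eps'$ argument for $\mu_s$, one makes the middle term $< \tfrac\eps3$ for $N$ large enough. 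Adding the three bounds gives $\rho_{s, r}(\Phi(t)(A)) < \eps$, as desired.

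The main obstacle is the middle term: handling $\rho_{s, N, r}(\Phi(t)(A)) - \rho_{s, N, r}(\Phi_N(t)(A))$ requires comparing the truncated and untruncated flows on a set of large measure, not just on a compact set, and turning the $\H^\sigma$-closeness of $\Phi_N(t)$ and $\Phi(t)$ from Lemma~\ref{LEM:approx} into a smallness statement for the measure of the symmetric difference. I would route this through inner regularity of $\mu_s$ to localize to a compact $K$ carrying all but $\eta$ of the mass, apply Lemma~\ref{LEM:approx} on $K$ to sandwich $\Phi(t)(A \cap K)$ between $\Phi_N(t)$-images of $\eps'$-neighbourhoods, and then use that $\rho_{s, N, r} \ll \mu_s$ with uniformly $L^p$-bounded Radon--Nikodym derivative (Proposition~\ref{PROP:QFT}) together with the outer regularity / continuity of $\mu_s$ under shrinking neighbourhoods. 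This is essentially the argument already carried out in \cite{Tzvet, OTz}; I would remark that it applies verbatim here and, since $\dl$ only needs to beat $\dl_0/2$ and the $\eta$ above, it can be taken independent of $t \in [0, t_r]$ and of $N$, consistent with the stated remark.
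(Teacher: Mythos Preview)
Your overall strategy --- transferring Proposition~\ref{PROP:meas3} from $\rho_{s,N,r}$ to $\rho_{s,r}$ via the uniform convergence \eqref{rho_s} and Lemma~\ref{LEM:approx} --- matches the paper's, and the first and third terms in your decomposition are handled exactly as you say. The gap is in the middle term $|\rho_{s,N,r}(\Phi(t)(A)) - \rho_{s,N,r}(\Phi_N(t)(A))|$, and it is a genuine one: your plan is circular. If you inner-approximate $A$ by a compact $K \subset A$, Lemma~\ref{LEM:approx} lets you bound $\rho_{s,N,r}(\Phi(t)(K))$ by $\rho_{s,N,r}(\Phi_N(t)(K + B_{\eps',\sigma}))$, but you are then left with $\rho_{s,N,r}(\Phi(t)(A \setminus K))$ --- the $\rho$-measure of the $\Phi(t)$-image of a small set, which is precisely the type of quantity you are trying to control and for which you have no a priori bound. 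If instead $K$ is a large compact set carrying most of the total $\mu_s$-mass, then $A \cap K$ need not be compact, so Lemma~\ref{LEM:approx} does not apply to it; moreover the ``continuity under shrinking neighbourhoods'' you invoke only yields $\mu_s(A + B_{\theta,\sigma}) \to \mu_s(\bar A)$ as $\theta \to 0$, which agrees with $\mu_s(A)$ only for closed $A$.

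The paper breaks the circularity by a two-stage argument. First, for \emph{compact} $A$, one uses only the one-sided inclusion $\Phi(t)(A) \subset \Phi_N(t)(A + B_{\theta,\sigma})$ from Lemma~\ref{LEM:approx} (no symmetric-difference estimate is needed), together with $\rho_{s,N,r}(A + B_{\theta,\sigma}) - \rho_{s,N,r}(A) \to 0$ (which uses that $A$ is closed, via dominated convergence on $\ind_{A+B_{\theta,\sigma}} - \ind_A$ and the $L^p$-convergence of the densities from Proposition~\ref{PROP:QFT}), so that one may feed $A + B_{\theta,\sigma}$ into Proposition~\ref{PROP:meas3}. Second, for general measurable $A$, one inner-approximates the \emph{image} $\Phi(t)(A)$, not $A$ itself, by compact $K_j$; then $\Phi(-t)(K_j) \subset A$ is compact with $\rho_{s,r}(\Phi(-t)(K_j)) < \delta$, the compact case gives $\rho_{s,r}(K_j) = \rho_{s,r}\big(\Phi(t)(\Phi(-t)K_j)\big) < \eps/2$, and one lets $j \to \infty$. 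This image-side inner regularity, combined with the continuity and bijectivity of $\Phi(-t)$, is exactly the device that eliminates the uncontrolled remainder in your scheme.
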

\begin{proof}
Let $t_r$ be as in  Proposition~\ref{PROP:meas3}.
We first consider the case when $A$ is compact in $\H^\s(\T^2)$. Let $\eps > 0$. 
Thanks to  Proposition~\ref{PROP:meas3}, there is $\delta_1>0$ such that if 
there exists $N_0 \in \N$ such that 
$$
 \rho_{s, N, r} ( A+B_{ \theta, \s})< \dl_1
$$
\noi
for any $N \geq N_0$, 
then we have 
\begin{align}
\rho_{s, N, r} (\Phi_N(t) (A+B_{\theta, \s})) < \frac{\eps}{2}
\label{delta_0}
\end{align}

\noi
for any $ t \in [0, t_{r}]$ and any  $N \geq N_0$.
Recall that $B_{\theta, \s}$ denotes the (closed) ball of radius $\theta > 0$ in $\H^\s(\T^2)$.

We now observe that there exist $\delta_2>0$,  $N_1 \in \N$, and $\theta>0$ such that if 
\begin{equation}\label{delta_1}
\rho_{s, N, r} ( A)< \dl_2
\end{equation}

\noi
for any $ N \geq N_1$,  
then we have 
\begin{equation}\label{delta_2}
 \rho_{s, N, r} ( A+B_{\theta, \s})< \dl_1
 \end{equation}
 
 \noi
for any $ N \geq N_1$.
  More precisely, by writing 
 $$
 d\rho_{s, N,r}(u,v) =G_N(u,v)d \mu_s(u,v)
 \qquad \text{and}\qquad  d\rho_{s,r}(u,v) =G(u,v)d \mu_s(u,v),
 $$

\noi
it follows from Proposition \ref{PROP:QFT} that 
$G_N$ converges to $G$ in $L^p(d\mu_s)$ for every $p<\infty$. 
We can therefore write
 \begin{align*}
  \rho_{s, N, r} ( A+B_{\theta, \s})
  & -\rho_{s, N, r} ( A)
  =\int_{A+B_{\theta, \s}}G_N d\mu_s-\int_{A}G_Nd\mu_s
  \\
&   =
  \int_{A+B_{\theta, \s}}(G_N-G)d\mu_s
  +\int (\ind_{A+B_{\theta, \s}} - \ind_A) Gd\mu_s
  + \int_{A}(G-G_N)d\mu_s.
 \end{align*}

\noi
 Now, for the first and  third terms,
  we use the convergence of $G_N$ to  $G$ in $L^1(d\mu_s)$,
   while, for the second term, we invoke the dominated convergence (here we used the fact that $A$ is closed).
Therefore, we conclude that  \eqref{delta_1} implies \eqref{delta_2}.
 We also observe that thanks to \eqref{rho_s},
  there exist $\delta>0$ and $N_2 \in \N$ such that if 
\begin{align}
 \rho_{s,r}(A)<\delta, 
\label{delta_3}
\end{align}

\noi
 then we have \eqref{delta_1}
 for any $N \geq N_2$.
At this point, we have  already fixed the values of $\delta$, $\theta$, $N_0$, $N_1$,  and $N_2$.
 Finally, 
 it  follows Lemma~\ref{LEM:approx} and \eqref{rho_s} that there exists $N_3  = N_3(t,\theta, \eps)\in \N$ such that
 if \eqref{delta_3} holds, then we have
 $$
\rho_{s, r} (\Phi(t) (A)) 
 \leq 
\rho_{s, r} (\Phi_N(t) (A+B_{\theta, \s})) 
 \leq \rho_{s, N, r} (\Phi_N(t) (A+B_{\theta, \s})) + \tfrac \eps 2 <\eps
$$

\noi
for any $t \in [0, t_r]$ and any $N\geq \max(N_0, N_1, N_2, N_3)$.
Here, we used \eqref{rho_s} and \eqref{delta_0}
in the second and third inequalities, respectively.
This completes the proof when $A$ is compact. 

We now prove the statement for arbitrary measurable sets.
 Once again, fix $\eps > 0$. 
 We have just proved that there is $\delta>0$ such that,
  for every compact set $K$ with $\rho_{s,r}(K)<\delta$, we have
\begin{equation}\label{FrBg} 
\rho_{s, r} (\Phi(t) (K)) <\frac{\eps}{2}
\end{equation}

\noi
for any $ t\in [0,t_r]$.
Now, let  $A$ be an arbitrary measurable set of $ {\mathcal H}^\sigma(\T^2)$, $\sigma\in (1,s)$,  
such that $\rho_{s,r}(A)<\delta$.
By the inner regularity of $\rho_{s, r}$,
there exists a sequence $\{K_j\}_{j \in \N}$
of compact sets such that $K_j \subset \Phi(t)( A)$ and 
\begin{equation}
 \rho_{s, r} (\Phi(t) (A)) = \lim_{j \to \infty} \rho_{s, r} (K_j).
\label{meas7}
 \end{equation}

\noi
Note that $\Phi(-t) (K_j)$ is compact since it is the image of 
the compact set $K_j$ under the continuous map $\Phi(-t)$.
Moreover, by the bijectivity 
of the flow $\Phi(-t)$, 
we have $\Phi(-t) (K_j) \subset \Phi(-t) \Phi(t) (A) = A$.
In particular, we have
 $ \rho_{s, r} (\Phi(-t) (K_j) ) < \dl$.
Then, applying \eqref{FrBg} 
for  the compact set $\Phi(-t) K_j$, we obtain
\begin{equation}
 \rho_{s, r} ( K_j) 
=   \rho_{s, r} \big( \Phi(t) (\Phi(-t)K_j) \big)
 <  \frac{\eps}{2}
\label{meas8}
 \end{equation}

\noi
for all $j \in \N$ and all $t \in [0, t_r]$.
Hence, the desired conclusion follows from \eqref{meas7} and \eqref{meas8}.
This completes the proof of Lemma~\ref{LEM:meas4}.
\end{proof}

Finally, we present the proof of  Theorem \ref{THM:NLKG}.
\begin{proof}[Proof of Theorem \ref{THM:NLKG}]
Let $ A\subset {\mathcal H}^\sigma(\T^2)$, $\sigma\in (1,s)$,  be a measurable set  such that $\mu_s(A) = 0$. Then, for any $r > 0$, we have 
\[\mu_{s, r}(A) = 0.\]

\noi
By the mutual absolute continuity 
 of $\mu_{s, r}$ and $\rho_{s, r}$, 
 we obtain
\[\rho_{s, r}(A) = 0.\]

\noi
Then, by Lemma \ref{LEM:meas4}, we have
\begin{align}
\rho_{s, r}(\Phi(t) (A)) = 0
\label{meas9}
\end{align}

\noi
for $t \in [0, t_r]$.
By iterating this argument, 
we conclude that \eqref{meas9} holds for any $t >0$.
By invoking the mutual absolute continuity 
 of $\mu_{s, r}$ and $\rho_{s, r}$ once again, 
 we have
\[\mu_{s, r}(\Phi(t) (A)) = 0.\]

\noi
Finally, the dominated convergence theorem yields
\[\mu_{s}\big(\Phi(t) (A)\big) 
= \lim_{r \to \infty} 
\mu_{s, r}\big(\Phi(t) (A)\big) = 0.
\]

\noi
By the time reversibility of the equation \eqref{KG-sys}, 
the same conclusion holds for any $t <0$.
This completes the proof of Theorem \ref{THM:NLKG}.
\end{proof}

\begin{remark}
\rm

By combining  Lemma \ref{LEM:meas2} 
with the Yudovich's argument \cite{Y} as in~\cite{Tzvet, OTz}
(but with the critical power $p^1$), 
we can obtain the following quantitative bound,
characterizing the quasi-invariance of $\rho_{s, r}$:
$$
\rho_{s, r} (\Phi(t) (A)) \lesssim 
\big(\rho_{s, r} ( A) \big)^{\frac {1}{c^{1+|t|}}}
$$

\noi
for any $t\in \R$.
Here, the constant $c = c(r)$ depends on $r > 0$.
\end{remark}

\section{Quasi-invariance under  the NLW dynamics}

As already mentioned, 
the proof of Theorem~\ref{THM:NLW} for the nonlinear wave equation is very close to 
that  of Theorem~\ref{THM:NLKG} that we just  presented in the previous section.
In this section,  we only explain the needed modifications.

\subsection{The modified Gaussian measures} 
Since the quadratic part of the Hamiltonian $H$ 
defined in~\eqref{H}
for  the nonlinear wave equation does not control the $L^2$-norm, we shall prove the quasi-invariance for a small modification of $\mu_s$  that is absolutely continuous with respect to $\mu_s$.

Define   $\wt \mu_{s} $ as  the induced probability measure under the map:
\begin{equation*}
\o \in \O \longmapsto (u^\o(x), v^\o(x))
 \end{equation*}
with 
\begin{equation*}
u^\o(x) = g_0 + \sum_{n \in \Z^2\setminus \{0\}} \frac{g_n(\o)}
{
(|n|^2+|n|^{2s+2})^{\frac{1}{2}}
}e^{in\cdot x}
\qquad \text{and}\qquad 
v^\o(x) = \sum_{n \in \Z^2} 
\frac{h_n(\o)}{
(1+|n|^{2s})^{\frac{1}{2}}
}e^{in\cdot x},
\end{equation*}
where  $\{ g_n \}_{n \in \Z^2}$ and $\{ h_n \}_{n \in \Z^2}$
are as in \eqref{series}.
With $\ft u(0) = \int_{\T^2} u\, dx$, we can formally write  $\wt \mu_{s} $ as 
$$
d \wt \mu_s = Z_s^{-1} 
e^{-\frac 12 \int v^2- \frac 12  \int (D^s v)^2 
-  \frac 12 (\int  u )^2 
-  \frac 12 \int  |\nb u|^2 
-  \frac 12 \int (D^{s+1} u)^2}du 
dv,
$$
where 
$$
D:= \sqrt{-\Dl}.
$$

\noi
As we shall see below, the expression
\begin{equation}\label{lili}
H_0(u, v) = \frac 12 \int_{\T^2} v^2+\frac 12  \int_{\T^2} (D^s v)^2 
+  \frac 12 \bigg(\int_{\T^2}  u\bigg)^2 + \frac 12 \int_{\T^2}  |\nb u|^2 + \frac 12 \int_{\T^2} (D^{s+1} u)^2
\end{equation}

\noi
appears as  the quadratic part of the renormalized energy in the context of the nonlinear wave equation.
We have the following statement.
\begin{lemma} \label{LEM:equiv}
Let $s > \frac 12$. Then, the Gaussian measures $\mu_s$ and $\wt \mu_s$ are equivalent.
\end{lemma}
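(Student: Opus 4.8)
The plan is to use that both $\mu_s$ and $\wt\mu_s$ are mean-zero Gaussian measures that are simultaneously diagonalized in the Fourier basis, and then to appeal to the classical dichotomy for Gaussian product measures. Writing the Fourier coefficients of $(u,v)$ as $\{\ft u_n,\ft v_n\}_{n\in\Ld}$ (subject to the reality constraints as in \eqref{series}), both measures factor as products over $n\in\Ld$ of centered one-dimensional (for $n=0$) or two-dimensional (for $n\ne 0$) Gaussians. Under $\mu_s$, the coordinate $\ft u_n$ has variance proportional to $\jb n^{-2(s+1)}$ and $\ft v_n$ has variance proportional to $\jb n^{-2s}$; under $\wt\mu_s$, $\ft u_n$ has variance proportional to $(1+|n|^2+|n|^{2s+2})^{-1}$ and $\ft v_n$ has variance proportional to $(1+|n|^{2s})^{-1}$. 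Since both measures split with respect to the $(u,v)$ decomposition as $\mu_s=\mu_s^{(u)}\otimes\mu_s^{(v)}$ and $\wt\mu_s=\wt\mu_s^{(u)}\otimes\wt\mu_s^{(v)}$, it is enough to prove that $\mu_s^{(u)}$ is equivalent to $\wt\mu_s^{(u)}$ and $\mu_s^{(v)}$ is equivalent to $\wt\mu_s^{(v)}$.

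Next, I would invoke the Kakutani dichotomy (equivalently, the Feldman--H\'ajek theorem applied in this diagonal setting; see \cite{Kuo2}): given two centered Gaussian product measures whose $n$-th coordinates have variances $\sigma_n^2$ and $\tau_n^2$ with the ratios $\sigma_n^2/\tau_n^2$ bounded above and away from zero, the two measures are mutually absolutely continuous if
\[
\sum_{n\in\Ld}\Big(\frac{\sigma_n^2}{\tau_n^2}-1\Big)^2<\infty,
\]
and mutually singular otherwise. In the present situation the boundedness hypothesis is immediate, since $\jb n^{2s}$ is comparable to $1+|n|^{2s}$ and $\jb n^{2(s+1)}$ is comparable to $1+|n|^2+|n|^{2s+2}$; so everything reduces to checking the convergence of the above series in each of the $u$- and $v$-components.

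This is a short computation. For $|n|\ge 1$, the binomial expansion gives
\begin{align*}
\jb n^{2s}&=(1+|n|^2)^s=|n|^{2s}+s\,|n|^{2s-2}+O(|n|^{2s-4}),\\
\jb n^{2(s+1)}&=(1+|n|^2)^{s+1}=|n|^{2s+2}+(s+1)\,|n|^{2s}+O(|n|^{2s-2}),
\end{align*}
from which
\begin{align*}
\frac{1+|n|^{2s}}{\jb n^{2s}}-1&=O\big(|n|^{-\min(2,2s)}\big),\\
\frac{1+|n|^2+|n|^{2s+2}}{\jb n^{2(s+1)}}-1&=O\big(|n|^{-\min(2,2s)}\big).
\end{align*}
Hence in both components the general term of the series is $O\big(|n|^{-\min(4,4s)}\big)$, and since $\sum_{n\in\Z^2}|n|^{-\alpha}<\infty$ precisely when $\alpha>2$, the series converge exactly when $\min(4,4s)>2$, i.e.~when $s>\tfrac12$. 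The Kakutani dichotomy then yields that $\mu_s^{(u)}$ and $\wt\mu_s^{(u)}$ are equivalent, and likewise for the $v$-components, so $\mu_s$ and $\wt\mu_s$ are equivalent. (Alternatively, one could bypass the abstract theorem and verify directly that the candidate Radon--Nikodym density---an explicit infinite product of Gaussian factors---converges in $L^1(d\mu_s)$, using the same estimates.)

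I do not expect any genuine obstacle here: the argument is entirely soft plus elementary bookkeeping of exponents. The only point worth underlining is that the convergence threshold of the two-dimensional lattice sum $\sum_{n\in\Z^2}|n|^{-\alpha}$ is $\alpha>2$, and this is exactly what pins the restriction at $s>\tfrac12$; in fact the same computation shows that this restriction is sharp, since for $s\le\tfrac12$ the two Gaussian measures are mutually singular.
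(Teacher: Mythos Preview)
Your proposal is correct and follows essentially the same route as the paper: split $\mu_s$ and $\wt\mu_s$ into their $u$- and $v$-marginals, apply the Kakutani/Feldman--H\'ajek dichotomy in the diagonal Fourier basis, and verify the summability condition via an asymptotic expansion of $\jb{n}^{2s}$ and $\jb{n}^{2(s+1)}$. The only cosmetic differences are that the paper uses the symmetric form $S_n=(\lambda_n-\wt\lambda_n)^2/(\lambda_n+\wt\lambda_n)^2$ of the criterion and the mean value theorem in place of your ratio form and binomial expansion; these are equivalent once the variance ratios are bounded, as you note.
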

\begin{remark}
{\rm
In view of Lemma~\ref{LEM:equiv},  
it suffices to study the quasi-invariance property of $\wt \mu_s$ under the flow of the  defocusing cubic nonlinear wave equation. 	
}
\end{remark}
\begin{proof}
Note that $\mu_s$ and $\wt \mu_s$
are product measures on $u$ and $v$.
Define (formally)  $\mu^1_s$ and $\mu^2_s$ by 
\begin{align*}
d \mu^1_s 
 = Z^{-1} e^{ -  \frac 12 \int (J^{s+1} u)^2}du 
\qquad \text{and}\qquad 
d \mu^2_s 
 = Z^{-1} e^{ -  \frac 12 \int (J^{s} v)^2}dv.
\end{align*}

\noi
Then, we have $\mu_s = \mu^1_s \otimes \mu^2_s$.
Similarly, by 
defining $\wt \mu^1_s$ and $\wt \mu^2_s$ by 
\begin{align*}
d \wt \mu^1_s 
&  = Z^{-1} e^{ -\frac 12 (\int u)^2-  \frac 12 \int  |\nb u|^2 
-  \frac 12 \int (D^{s+1} u)^2}du , \notag\\
d \wt \mu^2_s 
 & = Z^{-1} e^{ -\frac 12 \int v^2 -  \frac 12 \int (D^{s} v)^2}dv, 
\end{align*}

\noi
we have $\wt \mu_s = \wt \mu^1_s \otimes \wt \mu^2_s$.
Hence, it suffices to prove that $\mu^j_s$
and $\wt \mu^j_s$ are equivalent, $j = 1, 2$.

First, let us consider the $j = 1$ case.
Given $\s < s$, define $\ld_n$
and $\wt \ld_n$ by 
\[ \ld_n = \frac{1}{\jb{n}^{2s+2-2\s}}
\qquad\text{and}\qquad
\wt \ld_n = 
\begin{cases}
1, & \text{if } n = 0,\\
\frac{\jb{n}^{2\s}}{|n|^2+  |n|^{2s+2}}& \text{if } n \ne 0.
\end{cases}
\]

\noi
Then, $\mu_s^1$ and $\wt \mu_s^1$ are  the Gaussian measures on $H^\s(\T^2)$
with the covariance operators $Q$ and $\wt Q$ given by\footnote{Namely, $Q$ 
and $\wt Q$ are defined by
the following relations:
\[
-\frac 12 \int (J^{s+1} u)^2
= -\frac 12 \jb{ Q^{-1} u, u }_{H^\s}
\quad \text{and}
\quad
-\frac 12 \bigg(\int u\bigg)^2
-\frac 12 \int \big(u^2+   |\nb u|^2  + (D^{s+1} u)^2\big)
= -\frac 12 \jb{ \wt Q^{-1} u, u }_{H^\s}.
\]}
\[ Q e_n = \ld_n e_n
\qquad\text{and}\qquad
\wt Q e_n = \wt \ld_n e_n, 
\]

\noi
respectively, where $e_n(x) =  e^{ i n\cdot x}$.
Now, define $S_n$ by 
\[ S_n = \frac{(\ld_n- \wt \ld_n)^2}{(\ld_n + \wt \ld_n)^2}.\]

\noi
Then, by Kakutani's theorem \cite{Kakutani} (or Feldman-H\'ajek theorem \cite{Feldman, Hajek}),
it follows that $\mu^1_s$ and $\wt \mu^1_s$ are equivalent
if and only if 
\begin{align}
\sum_{n \in \Z^2} S_n < \infty.
\label{M3c}
\end{align}
	
\noi
Otherwise, they are singular.

For $n \ne 0$, we have
\begin{align*}
 S_n = \frac{(|n|^2+ |n|^{2s+2} - \jb{n}^{2s+2})^2}{(|n|^2 + |n|^{2s+2}+ \jb{n}^{2s+2})^2}
\sim  \frac{(|n|^2+ |n|^{2s+2} - (1+|n|^2)^{s+1})^2}{\jb{n}^{4s+4}}.
\end{align*}

\noi
By the mean value theorem applied to  $f(x) = x^{s+1}$, we have
\begin{align*}
\big| |n|^{2s+2} - (1+|n|^2)^{s+1}\big|
= |f(|n|^2)- f(1+|n|^2)\big|
\sim |n|^{2s}.
\end{align*}

\noi
Hence, we obtain
\begin{align*}
 S_n \sim  \frac{(|n|^2+ |n|^{2s})^2}{\jb{n}^{4s+4}}
\les \frac{|n|^{4}+  |n|^{4s}}{\jb{n}^{4s+4}}, 
\end{align*}

\noi
which is summable over $\Z^2$, provided that $s > \frac 12$.
This proves \eqref{M3c}
and the equivalence of $\mu^1_s$ and $\wt \mu^1_s$.
A similar computation yields
the equivalence of $\mu^2_s$ and $\wt \mu^2_s$.
We omit details.
\end{proof}

\subsection{Renormalized energy for NLW}
In this subsection, we derive the renormalized energy in the context of the truncated NLW: 
\begin{equation}\label{NLW-sys_N}
\begin{cases}
\partial_t u=v\\ \partial_t v=\Delta u-\pi_N((\pi_N u)^3).
\end{cases}
\end{equation}

\noi
Once the renormalized energy is derived,
 the remaining of the proof of Theorem~\ref{THM:NLW} is exactly the same as the proof of Theorem~\ref{THM:NLKG}. 

If $(u,v)$ is a solution to the truncated NLW  \eqref{NLW-sys_N}, 
then we have
\begin{equation}
\dt \bigg[\frac 12  \int_{\T^2} (D^s v_N)^2 + \frac 12 \int_{\T^2} (D^{s+1}  u_N)^2
\bigg] 
 =  \int_{\T^2} (D^{2s} v_N)(-u_N^3),
\label{HH1}
\end{equation}

\noi
where 
$(u_N,v_N)=(\pi_N u,\pi_N v)$ as before.
Let $s\geq 2$ be an even integer.
Then,  by  the Leibniz rule, we have
\begin{align}
\int_{\T^2} (D^{2s} v_N)(-u_N^3)
& =
-3\int_{\T^2} D^sv_N\, D^s u_N\, u_N^2\notag \\
& \hphantom{X} +
\sum_{\substack{ |\al|+|\be|+|\g|=s\\
|\al|,|\be|,|\g|<s
}}
c_{\alpha,\beta,\gamma}
\int_{\T^2}
D^sv_N\cdot \dd^\al u_N\cdot  \dd^\be u_N \cdot \dd^\g u_N
\label{HH2}
\end{align}

\noi
for some inessential constants $c_{\alpha,\beta,\gamma}$.
Furthermore,  we can write 
\begin{align}
-3\int_{\T^2} D^sv_N\, & D^s u_N\, u_N^2= - \frac 32 \dt \bigg[\int_{\T^2} (D^s u_N)^2u_N^2 \bigg]
+ 3 \int_{\T^2} (D^s u_N)^2 v_N u_N 
\notag \\
& =
  - \frac 32 \dt\bigg[ \int_{\T^2} \P_{\ne 0} [(D^su_N)^2] \,  \P_{\ne 0}[u_N^2] \bigg]
+ 3 \int_{\T^2} \P_{\ne 0}[(D^su_N)^2]\, \P_{\ne 0}[v_N  u_N]  
\notag \\
& \hphantom{X}
-   \frac 32 \dt \bigg[\int_{\T^2} (D^s u_N)^2 \int_{\T^2}  u_N^2\bigg]
+   3  \int_{\T^2} (D^s u_N)^2\int  v_N  u_N .
\label{HH3}
\end{align}

\noi
As in \eqref{H1}, 
 the last two terms on the right-hand side  are problematic. 
 Therefore, we once again introduce  a suitable renormalization. Define $\wt \s_N$ by
\[ 
\wt \s_N = 
\E_{\wt \mu_s}
\bigg[\int_{\T^2} (D^s \pi_{N}u)^2\bigg] 
= \sum_{\substack{n \in \Z^2\\1\leq |n|\leq N}} 
\frac{|n|^{2s}}{|n|^2+|n|^{2s+2}}\sim \log N.
\]

\noi
Then, we have
\begin{align}
 -   \frac 32 & \dt \bigg[\int_{\T^2} (D^s u_N)^2 \int  u_N^2\bigg]
+   3  \int_{\T^2} (D^s u_N)^2\int  v_N u_N \notag\\
& = 
 -   \frac 32 \dt \bigg[\bigg(\int_{\T^2} (D^s u_N)^2-  \wt \s_N\bigg)\int_{\T^2}  u_N^2\bigg]
+   3 \bigg( \int_{\T^2} (D^s u_N)^2 - \wt \s_N\bigg)\int  v_N  u_N.
\label{HH4}
\end{align}

\noi
Thanks to the Wiener chaos estimate (Lemma \ref{LEM:hyp}), the term 
$$
\int_{\T^2} (D^s u_N)^2-  \wt\s_N
$$
enjoys the bound
$$
\bigg\|
\int_{\T^2} (D^s \pi_N u)^2- \wt \s_N
\bigg\|_{L^p(d \wt \mu_s(u,v))}\leq Cp, 
$$

\noi
for any finite $p \geq 2$, 
where the constant $C>0$ is independent of $p$ and $N$.

We now define the renormalized energy $H_{s, N}(u,v)$ by 
\begin{equation*}
H_{s, N}(u,v)  = \frac 12  \int (D^s v)^2 + \frac 12 \int (D^{s+1}  u)^2+ 
\frac 32  \int (D^s \pi_N u)^2  (\pi_N u)^2 - \frac 32 \, \wt\s_N  \int (\pi_N u)^2.
\end{equation*}

\noi
Then, it follows from \eqref{HH1} - \eqref{HH4} that, 
 if $(u,v)$ is a solution to \eqref{NLW-sys_N},  then we have
\begin{align}
\dt H_{s, N}(u_N,v_N) 
&  =  
 3 \int_{\T^2}  \P_{\ne 0}[ (D^s u_N)^2  ]  \, \P_{\ne 0} [v_N  u_N ]
+   3 \bigg( \int_{\T^2} (D^s u_N)^2 - \wt \s_N\bigg)\int_{\T^2}  v_N  u_N 
\notag \\
& \hphantom{X}
+ 
\sum_{\substack{ |\al|+|\be|+|\g|=s\\
|\al|,|\be|,|\g|<s
}}
c_{\al,\be,\g}
\int_{\T^2}
D^sv_N\cdot \dd^\al u_N \cdot \dd^\be u_N \cdot \dd^\g u_N.
\label{H8_pak}
\end{align}

\noi
As in Subsection \ref{SUBSEC:1.4}, 
all terms on the right-hand-side of \eqref{H8_pak} are suitable for a perturbative analysis.
However,  a modification of the quadratic part is needed 
in order to have a resulting measure absolutely continuous with respect to $\wt \mu_s$.

For this purpose, we define the full renormalized  energy $E_{s, N}(u,v)$ as
\begin{align}
E_{s, N}(u,v)=H_{s, N}(u,v)+H_N(u, v) + \frac 12 \bigg(\int u \, dx\bigg)^2,
\label{NEW0}
\end{align}

\noi
where $H_N$ is the conserved energy for the truncated NLW \eqref{NLW-sys_N} defined 
by 
\begin{equation*}
H_N(u,v):=\frac{1}{2}\int_{\T^2}\big(|\nabla u|^2 + v^2\big)dx +\frac{1}{4}\int_{\T^2}(\pi_N u)^4dx.
\end{equation*}

%
\noi
The quadratic part of $E_{s,N}$ is now given by \eqref{lili}, 
resulting in the Gaussian measure $\wt \mu_s$ equivalent to $\mu_s$. 
Using  the truncated NLW ~\eqref{NLW-sys_N}, we have that
\begin{align*}
\dt E_{s, N}(u_N,v_N) 
& =\dt H_{s, N}(u_N,v_N) +\bigg(\int_{\T^2} u_N \bigg) \bigg(\int_{\T^2} v_N \bigg).
\end{align*}

\noi
Hence, the only new term  to be handled as compared to the proof of Theorem~\ref{THM:NLKG} is
\begin{equation}\label{NEW}
\bigg(\int_{\T^2} u_N \bigg) \bigg(\int_{\T^2} v_N \bigg).
\end{equation}
More precisely, we need to estimate \eqref{NEW} under the restriction
on the truncated energy
\begin{equation}\label{restriction}
H_N(u,v)
\leq r.
\end{equation}

\noi
By the compactness of the domain $\T^2$, 
we have 
$$
\bigg|\bigg(\int_{\T^2} u_N \bigg) \bigg(\int_{\T^2} v_N \bigg)\bigg|
\leq \|\pi_N u\|_{L^4(\T^2)}
\|\pi_N v\|_{L^2(\T^2)}\leq C_r
$$

\noi
 under \eqref{restriction}.
Therefore,  the contribution of \eqref{NEW} to $\dt E_{s, N}(u_N,v_N)$ is easy to deal with. 
We finally note that the introduction of $H_N(u,v)$ in the definition \eqref{NEW0} of the modified energy leads 
to the introduction of a new harmless term $\int (\pi_N u)^4$ in the definition of the weighted 
Gaussian measures $\rho_{s,N, r}$.
The remaining part of the analysis leading to the proof of Theorem~\ref{THM:NLW} 
is exactly the same\footnote{Note that the proof of the change-of-variable formula (an analogue of Lemma \ref{LEM:cov} for NLW)
requires (i) the Hamiltonian structure of the truncated dynamics \eqref{NLW-sys_N},
leading to the invariance of the Lebesgue measure $L_N$ on 
$\EE_N\times \EE_N$ and (ii) invariance of the marginal Gaussian measure $\wt \mu_{s, N}^\perp$
on $\pi_N^\perp \H^\s(\T^2)$.
See the proofs of Proposition 4.1 in \cite{Tzvet} and Proposition 6.6 in \cite{OTz}.
Clearly, (i) is satisfied. We  see that (ii) is also satisfied
since $H_0$ defined in \eqref{lili} satisfies
\[H_0(\pi_N^\perp u, \pi_N^\perp v)
  = \frac 12 \int_{\T^2} (\pi_N^\perp v)^2+\frac 12  \int_{\T^2} (D^s \pi_N^\perp v)^2 
 + \frac 12 \int_{\T^2}  |\nb \pi_N^\perp u|^2 + \frac 12 \int_{\T^2} (D^{s+1} \pi_N^\perp u)^2\]
 
 \noi
 which is conserved by the linear wave dynamics on the high frequencies 
$(\EE_N\times \EE_N)^\perp$.
} 
as the one already presented in the proof of Theorem~\ref{THM:NLKG} and therefore  
we omit details.

\appendix

\section{On the dispersion generalized NLKG}
\label{SEC:disp}

In this appendix, we briefly discuss the situation for
 the  (much easier) dispersion generalized NLKG \eqref{NLKG5} with $\be > 1$.
The equation \eqref{NLKG5} is a Hamiltonian equation with 
the Hamiltonian given by
\begin{align*}
E^\be(u) 
& = \frac 12 \int_{\T^2} (J^\be u )^2 + \frac 12 \int_{\T^2} v^2 + \frac14 \int_{\T^2} u^4. 
\end{align*}

\noi
By repeating the computation in Subsection \ref{SUBSEC:1.4}, we have
\begin{align}
\dt \bigg[\frac 12    \int_{\T^2} (J^s v)^2  & + \frac 12 \int_{\T^2} (J^{s+\be}  u)^2
\bigg] 
 =  \int_{\T^2}  J^{2s} v (- u^3 )
\notag\\
& =  - 3\int_{\T^2} (\dt J^{s} u)J^s u \cdot u^2 + \text{l.o.t.}
\notag\\
& =  - \frac 32 \dt \bigg[\int_{\T^2} (J^su)^2u^2 \bigg]
+ 3 \int_{\T^2} (J^su)^2\dt u \cdot u  + \text{l.o.t.}, 
\label{G10}
\end{align}

\noi
\noi
where ``l.o.t.''\,denotes various (insignificant) lower order terms.
Define $E^\be_s(u, v)$ and $E^\be_{s, N}(u, v)$by 
\begin{align}
E^\be_s(u, v) 
& = 
\frac 12  \int (J^s \dt u)^2 + \frac 12 \int (J^{s+\be}  u)^2
+ \frac 32  \int  (J^s u)^2u^2, 
\label{G11}\\
E^\be_{s, N}(u, v) 
& = 
\frac 12  \int (J^s \dt u)^2 + \frac 12 \int (J^{s+\be}  u)^2
+ \frac 32  \int  (J^s \pi_N u)^2 (\pi_N u)^2.
\label{G12}
\end{align}

Define the following weighted Gaussian measure $\rho^\be_{s,N, r}$, $N \in \N \cup\{\infty\}$,  by 
\[ d \rho^\be_{s,N, r} = Z^{-1} \ind_{\{E_N^\be(u, v) \leq r\}} e^{-E_{s, N}^\be(u, v)} du dv
= Z^{-1}  \ind_{\{E_N^\be(u, v) \leq r\}} e^{
- \frac 32  \int  (J^s \pi_N u)^2 (\pi_N u)^2} d\mu^\be_s,\]

\noi
where $\mu^\be_s$ is as in \eqref{mu_b} and $E_N^\be$ is the truncated energy defined by
\begin{align*}
E_N^\be(u) 
& = \frac 12 \int_{\T^2} (J^\be u )^2 + \frac 12 \int_{\T^2} v^2 + \frac14 \int_{\T^2} (\pi_N u)^4. 
\end{align*}

\noi
Then, in view of the comment in Remark~\ref{REM:disp}, 
we can  repeat the argument in Section~\ref{SEC:typical}
(without any renormalization)
and show that $\rho^\be_{s,N, r}$ is a well defined
probability measure (even when $r = \infty$
thanks to the defocusing nature of the equation)
with a uniform bound in 
 $N \in \N \cup\{\infty\}$.

Let us now turn to the energy estimate.
Let $s \geq \be > 1$.
It follows from~\eqref{G10},~\eqref{G11},  and~\eqref{G12} that 
\begin{align}
\dt E_{s, N}^\be(u, v) 
& =  
3 \int (J^s \pi_N u)^2  \cdot \pi_N v \cdot \pi_N u  + \text{l.o.t.}
\label{G13}
\end{align}

\noi
for a solution $(u, v)$ to the following truncated dispersion generalized NLKG:
\begin{equation*}
\begin{cases}
\partial_t u=v\\
\partial_t v=J^{2\be}u -\pi_N((\pi_N u)^3).
\end{cases}
\end{equation*}

\noi
By interpolation and the Sobolev embedding $H^\be(\T^2) \subset L^\infty(\T^2)$, $\be > 1$, we have
\begin{align*}
\bigg| \int_{\T^2} (J^s \pi_N u)^2\cdot \pi_Nv  \cdot \pi_N u\bigg|
& \leq \| J^s \pi_N u\|_{L^4_x}^2 \underbrace{\| \pi_N v \|_{L^2_x} \| \pi_N u \|_{L^\infty_x}}_{\les E_N^\be (u, v)}\\
& \les \| J^{s+\be - 1- \eps} \pi_N u\|_{L^r_x}^{2(1-\theta)}
\big( E_N^\be(u, v)\big)^{1+2\theta},
\end{align*}

\noi
for some  $\theta \in (0, 1]$ and $r > 4$ satisfying
\begin{align*}
s  = \theta \be + (1-\theta) ( s+ \be - 1- \eps)
\qquad \text{and}\qquad
\frac{1}{4}  = \frac{\theta}{2} + \frac{1-\theta}{r}.
\end{align*}

\noi
Hence, by the Wiener chaos estimate (Lemma \ref{LEM:hyp}), 
we obtain the  crucial  energy estimate:
\begin{align*}
\bigg\|  \ind_{\{E_N^\be(u, v) \leq r\}} \cdot \int_{\T^2} (J^s \pi_N u)^2\cdot \pi_Nv  \cdot \pi_N u
\bigg\|_{L^p(d\mu^\be_{s})}
& \les p^{1-\theta}
\end{align*}

\noi
for some $\theta > 0$.
The lower order terms in~\eqref{G13} can be handled in a similar (or easier) manner.
Then, one can repeat the argument in~\cite{Tzvet}
and prove quasi-invariance of the Gaussian measure $\mu_s^\be$,
at least for an even integer $s \geq \be$.

\begin{ackno}\rm
T.O.~was supported by the European Research Council (grant no.~637995 ``ProbDynDispEq'').
The authors would like to thank Prof.~Andrew Stuart 
for pointing out the remark on propagation of additional regularity in Subsection \ref{SUBSEC:1.3}, 
Prof.~David Elworthy
for pointing out the references \cite{P1, P2}, 
and Prof.~Sergey Nazarenko
for an interesting discussion on wave turbulence.
The authors are also grateful to the anonymous referees
for their helpful comments that have improved the presentation of this paper.

\end{ackno}

\end{document}